\newtheorem{thm}[equation]{Theorem}
\newtheorem{cor}[equation]{Corollary}
\newtheorem{lem}[equation]{Lemma}
\newtheorem{prop}[equation]{Proposition}
\theoremstyle{definition}
\numberwithin{equation}{section}
\newcommand{\fgl}{\mathfrak{gl}}
\newcommand{\fsl}{\mathfrak{sl}}
\newcommand{\gl}{\mathfrak{gl}}
\newcommand{\fosp}{\mathfrak{osp}}
\newcommand{\fpsl}{\mathfrak{psl}}
\newcommand{\fa}{\mathfrak{fa}}
\definecolor{mjo}{rgb}{0,0,.9}
\newcommand{\complex}{\mathbb{C}}
\newcommand{\Z}{\mathbb{Z}}
\newcommand{\fs}{\mathfrak{s}}
\newcommand{\cc}{\mathbb{C}}
\newcommand{\z}{\mathbb{Z}}
\newcommand{\fg}{\mathfrak{g}}
\newcommand{\fr}{\mathfrak{r}}
\newcommand{\fm}{\mathfrak{m}}
\newcommand{\g}{\mathfrak{g}}
\newcommand{\h}{\mathfrak{h}}
\newcommand{\fp}{\mathfrak{p}}
\newcommand{\Ind}{\mbox{Ind}}
\newcommand{\n}{\mathfrak{n}}
\newcommand{\lm}{\mathcal{L}}
\newcommand{\W}{\mathcal{W}}
\newcommand{\F}{\mathcal{F}}
\DeclareMathOperator{\Wh}{Wh}
\newcommand{\M}{\widetilde{M}}
\newcommand{\0}{\bar 0}
\newcommand{\1}{\bar 1}
\DeclareMathOperator{\ad}{ad}
\def \fa{\mathfrak{a}}
\begin{document}

\title[Whittaker modules for Lie superalgebras]{Whittaker categories and strongly typical Whittaker modules for Lie superalgebras}

\author{Irfan Bagci}
\address{Department of Mathematics \\
           University of California, Riverside\\ Riverside, CA 92521}
           \email{irfan@math.ucr.edu}          
\author{Konstantina Christodoulopoulou}
\address{Department of Mathematics \\
           University of Connecticut\\ Storrs, CT 06269}
\email{christod@uconn.edu}
\author{Emilie Wiesner}
\address{Department of Mathematics \\
           Ithaca College\\ Ithaca, NY 14850  }
\email{ewiesner@ithaca.edu}

\maketitle

\begin{abstract}
Following analogous constructions for Lie algebras, we define Whittaker modules and Whittaker categories for finite-dimensional simple Lie superalgebras.  Results include a decomposition of Whittaker categories for a Lie superalgebra according to the action of an appropriate sub-superalgebra; and, for basic classical Lie superalgebras of type I, a description of the strongly typical simple Whittaker modules.
\end{abstract}

\section{Introduction}

``Whittaker" modules and Whittaker categories have been studied for a variety of Lie algebras and have a well-developed theory in the Lie algebra setting.  This work relies on Lie algebra structures that have natural analogues for Lie superalgebras.  Here we focus on defining and investigating Lie superalgebra equivalents of both Whittaker modules and Whittaker categories.

Kostant \cite{Ko1978} defined Whittaker modules for complex finite-dimensional semisimple Lie algebras $\g$.  Such a Lie algebra has a triangular decomposition $\g= \n^- \oplus \h \oplus \n^+$ (where $\h$ is a Cartan subalgebra and $\n^{\pm}$ is nilpotent); a Whittaker module is a $\g$-module $V$ generated (as a $\g$-module) by a one-dimensional $\n^+$-submodule $\cc w \subseteq V$.  Among other results, Kostant provided a description of Whittaker modules $V$, for a restricted set of generating $\n^+$-submodules $\cc w$, in terms of the action of the center $Z(\g)$ of $U(\g)$.     McDowell \cite{mcdowell} and Mili$\check{\hbox{\rm c}}$i$\acute{\hbox{\rm c}}$ and Soergel \cite{milicic-soergel} built on Kostant's work to produce a description of Whittaker modules for all generating $\n^+$-submodules $\cc w$, along with results that situated Whittaker modules in a larger category of $\g$-modules with locally finite $\n^+$- and $Z(\g)$-actions.

Many other Lie algebras and related algebras possess a structure similar to the triangular decomposition of complex finite-dimensional semisimple Lie algebras, and there has been a variety of work done to define and investigate Whittaker modules in these settings.  Whittaker modules have been studied for Heisenberg Lie algebras \cite{Ch08}, for the Virasoro algebra \cite{OW2008}, for the twisted Heisenberg-Virasoro algebra \cite{Li10}, and for graded Lie algebras \cite{Wa10}. There has also been work on Whittaker modules for quantum groups \cite{Se00,On05} and for generalized Weyl algebras \cite{BO08}.

More recently, Batra and Mazorchuk developed a framework to unite and expand on the previous appearances of Whittaker modules.  They considered pairs of Lie algebras $\g \supseteq \n$ where $\n$ is a nilpotent subalgebra of $\g$ and investigated the category $\W$ of $\g$-modules such that $\n$ acts locally finitely.  (We refer to this category as a Whittaker category.) Among their main results in this general setting, they showed that if the action of $\n$ on $\g$ is locally finite, then the category $\W$ decomposes according to the action of $\n$.

In this paper we adapt the framework of Batra and Mazorchuk to finite-dimensional simple Lie superalgebras.  Section \ref{section:background} presents relevant background information for Lie superalgebras.  Section \ref{section:WhittakerCategories} defines Whittaker categories $\W$ for Lie superalgebra pairs $(\g, \n)$ and gives a decomposition of $\W$ according to the action of $\n$, among other results.  

Unlike the Lie algebra setting, simple finite-dimensional modules for a finite-dimensional nilpotent Lie superalgebra are not always one-dimensional; this creates an additional challenge for reproducing Lie algebra results in the Lie superalgebra setting.  For this reason, we restrict to basic classical Lie superalgebras of type I in Sections  \ref{section:TypeI} and \ref{section:nonsingular}.  In Section  \ref{section:TypeI}, we construct modules $\widetilde{M}_{\psi, \lambda}$ ($\psi \in\lm\subseteq\n^*$, defined below, and $\lambda \in \h^*$) in the Lie superalgebra category $\W(\g, \n)$ by inducing up from particular modules  $M_{\psi, \lambda}$ in the underlying Lie algebra category $\W(\g_{\overline 0}, \n_{\overline 0})$.  We show that, under certain mild restrictions on $\psi$ and $\g$,  the modules $\widetilde{M}_{\psi, \lambda}$ have unique simple quotients $\widetilde{L}_{\psi, \lambda}$. Moreover, for suitable choices of $\lambda$, the modules  $\widetilde{L}_{\psi, \lambda}$ give a complete list up to isomorphism of the strongly typical Whittaker modules. 

If $\psi: \n \rightarrow \cc$ is nonzero on the simple roots of $\g_{\overline{0}}$, then the $\g_{\overline{0}}$-module $M_{\psi, \lambda}$ is simple.  In Section \ref{section:nonsingular} we study the corresponding induced modules $\widetilde{M}_{\psi, \lambda}$.  In particular, we show that the $\fsl(1,2)$-modules $\widetilde{M}_{\psi, \lambda}$ are simple and thus give a complete description of the strongly typical simple Whittaker $\fsl(1,2)$-modules.

\section{Background for Lie Superalgebras} \label{section:background}
In this paper we restrict to finite-dimensional Lie superalgebras over $\cc$.  Such a Lie superalgebra is a $\z_2$-graded vector space $\g=\g_{\overline{0}} \oplus \g_{\overline{1}}$ with a bracket $[ , ]: \g \times \g  \rightarrow \g$ which preserves the $\z_2$-grading and satisfies graded versions of the operations used to define Lie algebras.  Let $d(x)$ denote the $\z_2$-degree of a homogeneous element $x \in \g$. The even part $\g_{\overline{0}}$ is a Lie algebra under the bracket operation. Finite-dimensional complex simple Lie superalgebras were classified by Kac \cite{Kac}.

In this paper $\g$-modules $V$ are $\z_2$-graded: $V= V_{\overline{0}} \oplus V_{\overline{1}}$.  Thus there is a parity change functor $\Pi$ on the category of $\g$-modules, which interchanges the $\z_2$-grading of a module.  For a finite-dimensional module $V$ and a simple module $L$, define $[V:L]$ to be the number of times that $L$ appears as a factor in a composition series of $V$.  We restrict to modules $V$ with at most a countable basis.

For a  Lie (super)algebra $\mathfrak{a}$ denote by $U(\mathfrak{a})$ its universal enveloping (super)algebra and by  $Z(\mathfrak{a})$ the center of $U(\mathfrak{a})$. We call an algebra homomorphism $\chi: Z(\fa) \to\cc$ a character of $Z(\fa)$. We
say that an $\fa$-module $V$ has central character $\chi$ if $zv = \chi(z) v$ for every $z \in Z(\fa)$ and $v\in V$.

 Note that $U(\fg)$ can be viewed as an $\fg$-module through the adjoint action:
\begin{equation}\label{def:adjoint action on U(g)}
(ad x)u=xu-(-1)^{d(x)d(u)}ux
\end{equation} for homogeneous $x\in \fg$, $u\in U(\fg)$. 

The superalgebra $U(\fg)$ is a  supercommutative Hopf superalgebra.   Thus we can define a module structure on a tensor product of modules using the coproduct.  For  a sub-superalgebra $\n$ of $\fg$, we define induced modules as follows.  First note that the Hopf superalgebra structure on $U(\fg)$ gives a $\g$-module structure on the tensor product $M\otimes N=:M\otimes_{\cc}N$, where $M,N$ are $\fg$-modules.
Then  for an $\n$-module $N$, $\Ind_\n^\fg N=U(\fg)\otimes_{U(\n)}N$ has the natural structure of a left $U(\fg)$-module ($U(\fg)$ is considered as a right $U(\n)$-module and a left $U(\fg)$-module through multiplication).


\subsection{Nilpotent finite-dimensional Lie superalgebras}
Let $\n=\n_{\overline{0}} \oplus \n_{\overline{1}}$ be a nilpotent Lie superalgebra, and let $\F(\n)$ (abbreviated as $\F$) be the category of finite-dimensional $\n$-modules. 

Define 
\begin{equation}
\lm = \{ \psi \in \n^* \mid \psi(\n_{\overline{1}})=0, \psi([\n_{\overline{0}},\n_{\overline{0}}])=0 \}.
\end{equation}

For a fixed $\psi \in \lm$, one can choose $\h \subseteq \n$ such that 
\begin{itemize}
\item $\n_{\overline{0}} \subseteq \h$;
\item $\psi([\h,\h])=0$;
\item $\h_{\overline 1}$ is maximal so that $\psi([\h_{\overline 1},\h_{\overline 1}])=0$.
\end{itemize}
Thus, $\h$ is maximal in $\n$ so that $\psi\mid_{\h}$ is a Lie superalgebra homomorphism. Define
$$
I(\psi) = \Ind_{\h}^{\n} (\psi) = U(\n) \otimes_{U(\h)} \cc_{\psi}
$$ 
 Note that if $\psi([\n_{\overline{1}}, \n_{\overline{1}}])=0$, then $\h=\n$ and $I(\psi)$ is one-dimensional. 

The following is a summary of several results from \cite{Kac}, with a correction given in \cite{Ser}.  (These results are stated for solvable Lie superalgebras.  When applied to nilpotent Lie superalgebras, the results in the two papers coincide.) 
\begin{prop} \label{lem:n irreducible modules}
For $\psi \in \lm$, the module $I(\psi)$ is independent of the choice of $\h$, is finite-dimensional, and is irreducible.  Moreover, up to the parity functor, the set $\{I(\psi) \mid \psi \in \lm \}$ provides a complete list of the irreducible objects in $\F$. 
\end{prop}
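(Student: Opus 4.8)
The plan is to establish the four assertions in turn — finite-dimensionality, irreducibility, independence of $\h$, and the classification — using a Poincar\'e--Birkhoff--Witt description of $I(\psi)$, an identification of $I(\psi)$ with a spinor-type module over a Clifford algebra, and Frobenius reciprocity.

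\emph{Finite-dimensionality.} I would choose a homogeneous PBW basis of $\n$ refining one of $\h$. Since $\n_{\0}\subseteq\h$ the complement of $\h$ consists of odd elements $x_1,\dots,x_k$, and $x_i^2=\tfrac12[x_i,x_i]\in\n_{\0}\subseteq\h$, so the square-free monomials $x_{i_1}\cdots x_{i_r}\otimes 1$ ($i_1<\cdots<i_r$) form a basis of $I(\psi)$; thus $\dim I(\psi)=2^k$ with $k=\dim\n_{\1}-\dim\h_{\1}$. The bracket defines a symmetric bilinear form $B_\psi(x,y)=\psi([x,y])$ on $\n_{\1}$ (symmetric because the bracket of two odd elements is), and the conditions defining $\h$ say precisely that $\n_{\0}\subseteq\h$ and $\h_{\1}$ is a maximal $B_\psi$-isotropic subspace of $\n_{\1}$; since over $\cc$ all maximal isotropic subspaces of a symmetric form have the same dimension, $k$ — and so $\dim I(\psi)$ — depends only on $\psi$. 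This also gives $\operatorname{rad}B_\psi\subseteq\h_{\1}$.

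\emph{Irreducibility} is the crux, and I would obtain it by exhibiting $I(\psi)$ as a spinor module. First suppose $\n$ is $2$-step nilpotent. Then $[\n,\n]$ is central; its odd part acts by $0$ on $I(\psi)$ (an odd central $z$ sends $u\otimes 1\mapsto uz\otimes 1=u\otimes\psi(z)1=0$), and a short computation then shows that $\n_{\0}$ acts on $I(\psi)$ by the scalars $\psi|_{\n_{\0}}$ and that $\h_{\1}$ acts by $0$. Hence the $U(\n)$-action factors through the Clifford algebra $\mathrm{Cl}(\n_{\1},B_\psi)\cong \mathrm{Cl}(\n_{\1}/\operatorname{rad}B_\psi)\otimes\Lambda(\operatorname{rad}B_\psi)$ in such a way that $\operatorname{rad}B_\psi$ and the image of the maximal isotropic $\h_{\1}$ act by $0$; comparing dimensions ($2^{\dim\n_{\1}-\dim\h_{\1}}=2^{\lceil\operatorname{rank}B_\psi/2\rceil}$) identifies $I(\psi)$ with the unique (up to $\Pi$) irreducible $\z_2$-graded $\mathrm{Cl}(\n_{\1},B_\psi)$-module, the spinor module, which is irreducible. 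The general case reduces to this one by induction on the nilpotency class: take a homogeneous $z$ in the last nonzero term of the lower central series, hence central; if $z$ is odd, or even with $\psi(z)=0$, then $z$ acts by $0$ and the action of $\n$ on $I(\psi)$ factors through $I_{\n/\cc z}(\bar\psi)$, to which induction applies, while the remaining case ($z$ even, $\psi(z)\neq 0$) is handled by absorbing $z$ into the central scalar action. The organization of this reduction, together with the parity ambiguity of the spinor module when $\operatorname{rank}B_\psi$ is odd, is the main obstacle — it is precisely the point addressed by the correction of \cite{Ser} to \cite{Kac}, and the source of the ``up to $\Pi$'' in the statement.

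\emph{Classification and independence of $\h$.} Given an irreducible $S\in\F$, the nilpotent Lie algebra $\n_{\0}$ acts on the finite-dimensional space $S$ with generalized weight decomposition $S=\bigoplus_\mu S_\mu$; since $\ad\n_{\0}$ acts nilpotently on $\n_{\1}$, a binomial-expansion estimate shows each $S_\mu$ is $\n_{\1}$-stable, hence $\n$-stable, so simplicity forces a single weight $\mu$, with $\mu([\n_{\0},\n_{\0}])=0$ by nilpotency. Put $\psi\in\n^*$ with $\psi|_{\n_{\0}}=\mu$ and $\psi(\n_{\1})=0$; then $\psi\in\lm$. Next I would build a polarization: an admissible $\h$ for $\psi$ with $\cc_\psi\hookrightarrow S|_{\h}$. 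Starting from a weight vector $v\in S_\mu$ with $\n_{\0}v=\mu(\n_{\0})v$, the subspace of $\n_{\1}$ annihilating $v$ is $B_\psi$-isotropic (if $u,u'$ kill $v$ then $0=[u,u']v=\psi([u,u'])v$), and one enlarges $v$ and this isotropic subspace in tandem until it is maximal; this produces $\h$ and a one-dimensional $\h$-submodule of $S$, which must be $\cc_\psi$ since any such submodule has $\h_{\1}$ acting by $0=\psi(\h_{\1})$ and $\n_{\0}$ acting by the single weight $\mu$. Frobenius reciprocity then gives $\Hom_\n(I(\psi),S)\cong\Hom_\h(\cc_\psi,S)\neq 0$, and since $I(\psi)$ is irreducible and $S$ is irreducible, such a nonzero map is an isomorphism up to $\Pi$; this yields the classification, and distinct $\psi$ give non-isomorphic $I(\psi)$ because $\psi|_{\n_{\0}}$ is recovered as the generalized $\n_{\0}$-eigenvalue and $\psi(\n_{\1})=0$. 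Finally, applying this to $S=I_{\h_0}(\psi)$ for one fixed admissible $\h_0$ — whose associated functional is again $\psi$, as $\n_{\0}$ acts on it by $\psi$ plus a nilpotent operator — shows $I_{\h}(\psi)\cong I_{\h_0}(\psi)$ for every admissible $\h$, proving independence of the choice of $\h$.
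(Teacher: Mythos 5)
The paper offers no proof of this proposition; it is quoted from \cite{Kac} as corrected in \cite{Ser}, so you are reconstructing a cited result rather than paralleling an argument in the text. Your strategy (PBW dimension count, identification of $I(\psi)$ with a spinor module over the Clifford algebra of the symmetric form $B_\psi(x,y)=\psi([x,y])$ on $\n_{\overline{1}}$, then polarizations and Frobenius reciprocity) is the standard one, and the finite-dimensionality part is correct: $\h_{\overline{1}}$ is indeed a maximal $B_\psi$-isotropic subspace, so $\dim I(\psi)=2^{\lceil\operatorname{rank}B_\psi/2\rceil}$ depends only on $\psi$.

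The irreducibility argument, however, has a genuine gap at the inductive step. Quotienting by homogeneous central elements that act by zero (odd ones, and even ones in $\ker\psi$) is fine, but it can leave an even central $z$ with $\psi(z)\neq 0$ still lying in the last nonzero term of the lower central series, so the nilpotency class does not drop; ``absorbing $z$ into the central scalar action'' changes nothing, since $z$ already acts by the scalar $\psi(z)$ and cannot be quotiented out. This Heisenberg-type configuration is exactly where the content of Kac's theorem lies, and it requires a Kirillov-lemma/Stone--von Neumann style induction on dimension, not on nilpotency class. Two further points. First, your claim that $\h_{\overline{1}}$ acts by $0$ on $I(\psi)$ is false except on the generator $1\otimes 1_\psi$: already for a Heisenberg pair with odd $x,h$, $[x,h]=z$ central even, $\psi(z)=1$, one computes $h(x\otimes 1)=1\otimes 1\neq 0$. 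Only $\operatorname{rad}B_\psi$ annihilates all of $I(\psi)$, and fortunately your dimension-count identification with the spinor module needs only that. Second, in the classification the existence, inside an arbitrary irreducible $S$, of a vector annihilated by $\h_{\overline{1}}$ and of $\n_{\overline{0}}$-weight $\psi$ is asserted (``enlarge in tandem'') rather than proved --- it needs a super-Engel argument using that each $u\in\h_{\overline{1}}$ squares to $\tfrac12[u,u]$, which acts nilpotently --- and as written your construction produces such a vector only for one particular admissible $\h$, whereas the concluding ``independence of $\h$'' step requires it for every admissible $\h$.
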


In this paper it will also be useful to view $I(\psi)$ as an $\n_{\overline{0}}$-module.  The following lemma shows that, as an $\n_{\overline{0}}$-module, $I(\psi)$ has a composition series with all factors isomorphic to $\cc_{\psi}$.
\begin{lem} \label{lem;n0nilpotent}
Let $\psi \in \lm$.  Then the $\n_{\overline{0}}$-action $x-\psi(x)$ on $I(\psi)$ is locally nilpotent.
\end{lem}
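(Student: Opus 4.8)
The plan is to show that for each $x \in \n_{\overline 0}$, the operator $x - \psi(x)$ acts nilpotently on the finite-dimensional space $I(\psi)$; since $I(\psi)$ is finite-dimensional by Proposition~\ref{lem:n irreducible modules}, local nilpotence is equivalent to ordinary nilpotence, and it suffices to treat a single $x$. I would work with the explicit PBW-type realization $I(\psi) = U(\n) \otimes_{U(\h)} \cc_\psi$. Choosing an ordered basis $y_1, \dots, y_k$ of a complement of $\h_{\overline 1}$ inside $\n_{\overline 1}$ (these are odd, so each $y_i$ squares into $\h$ and the monomials $y_{i_1}\cdots y_{i_r}$ with $i_1 < \cdots < i_r$ span $I(\psi)$ over $\cc_\psi$), one gets a finite spanning set of $I(\psi)$ indexed by subsets of $\{1,\dots,k\}$. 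I would then filter $I(\psi)$ by the length $r$ of these monomials: let $F_r$ be the span of the images of monomials of length $\le r$. This is a finite filtration $0 = F_{-1} \subseteq F_0 \subseteq \cdots \subseteq F_k = I(\psi)$.

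The key computation is that $\n_{\overline 0}$ preserves each $F_r$ and acts on the subquotient $F_r/F_{r-1}$ by the scalar $\psi$. Indeed, for $x \in \n_{\overline 0}$ and a monomial $y_{i_1}\cdots y_{i_r}$, one moves $x$ to the right past each $y_{i_j}$ using $x\, y = y\, x + [x,y]$ with $[x,y] \in \n_{\overline 1}$ (since $\n$ is a Lie superalgebra and $x$ is even, $y$ odd); each such commutator term either stays of length $r$ after reabsorbing (when $[x,y]$ lies in the chosen complement, up to rewriting by the ordering, which only produces lower-length terms coming from the odd relations $y_i y_j + y_j y_i \in \h$) or drops the length. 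Once $x$ reaches the rightmost slot it hits $\cc_\psi$ and returns $\psi(x)$, because $\psi|_{\h}$ is the defining character and $\n_{\overline 0} \subseteq \h$. Reordering the resulting length-$r$ monomial back into standard form again only introduces strictly lower-length corrections. Hence modulo $F_{r-1}$, $x$ acts as $\psi(x)\cdot \mathrm{id}$ on each standard monomial of length $r$, which is the claim.

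Granting this, $x - \psi(x)$ sends $F_r$ into $F_{r-1}$ for every $r$, so $(x-\psi(x))^{k+1} = 0$ on $I(\psi)$, proving local nilpotence. As a consequence, in a basis adapted to the filtration $\{F_r\}$ the action of $\n_{\overline 0}$ is upper triangular with all diagonal entries $\psi(x)$, which gives the composition series of $I(\psi)$ as an $\n_{\overline 0}$-module with every factor $\cong \cc_\psi$ promised in the remark preceding the lemma.

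The main obstacle is the bookkeeping in the second paragraph: one must verify carefully that, after commuting an even element past the odd generators and then re-sorting the monomial into the fixed PBW ordering, all the extra terms produced (both from the commutators $[x,y_i]$, which may have components along $\h_{\overline 1}$ that must be pushed onto $\cc_\psi$, and from the quadratic relations $y_i y_j + y_j y_i \in \h$) genuinely land in $F_{r-1}$. This is where one uses that $\psi([\h,\h]) = 0$ and $\h_{\overline 1}$ is a $\psi$-isotropic subspace containing $\n_{\overline 0}$, so that whenever a commutator lands in $\h$ it contributes only the scalar $\psi$ (often $0$) and not a new length-$r$ monomial. Everything else is routine PBW manipulation in $U(\n)$.
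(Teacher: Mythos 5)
Your reduction to showing that each single operator $x-\psi(x)$ is nilpotent on the finite-dimensional space $I(\psi)$ is fine, but the key step of your argument does not hold up. You filter $I(\psi)$ by the length of PBW monomials in a chosen complement $C$ of $\h_{\overline 1}$ in $\n_{\overline 1}$ and assert that $x-\psi(x)$ maps $F_r$ into $F_{r-1}$. Your own computation contradicts this: commuting $x$ past $y_{i_j}$ produces the term $y_{i_1}\cdots [x,y_{i_j}]\cdots y_{i_r}\otimes 1_{\psi}$, and whenever $[x,y_{i_j}]$ has a nonzero component in $C$ this term survives modulo $F_{r-1}$ as a genuine length-$r$ monomial. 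So on $F_r/F_{r-1}$ the element $x$ acts as $\psi(x)\cdot\mathrm{id}$ \emph{plus} the operator induced by $y\mapsto \pi_C([x,y])$, not as a scalar; the sentence ``hence modulo $F_{r-1}$, $x$ acts as $\psi(x)\cdot\mathrm{id}$'' is a non sequitur given the line before it. This is not just a presentational slip: one can build a nilpotent $\n$ with $\dim\n_{\overline 1}=4$, a central $z$ with $\psi(z)=1$ pairing the odd generators, and an $x\in\n_{\overline 0}$ with $[x,v_1]=v_2$ for two complement generators $v_1,v_2$; then $(x-\psi(x))(v_1\otimes 1_\psi)=v_2\otimes 1_\psi\in F_1\setminus F_0$. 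The conclusion of the lemma still holds there because $(x-\psi(x))^2$ kills $v_1\otimes 1_\psi$, but your stated filtration property fails, and to repair the argument you would have to prove separately that the induced operator on each $F_r/F_{r-1}$ is nilpotent --- which is exactly the ``bookkeeping'' you deferred, and which needs the compatibility of $\ad x$ with the form $\psi([\cdot,\cdot])$ and the maximal isotropy of $\h_{\overline 1}$.

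All of this can be avoided, and the paper does avoid it. For \emph{any} $u\in U(\n)$ one has, exactly and without choosing a basis,
\begin{equation*}
(x-\psi(x))(u\otimes 1_{\psi})=((\ad x)u)\otimes 1_{\psi},
\end{equation*}
because $xu=(\ad x)(u)+ux$ in $U(\n)$ and $ux\otimes 1_{\psi}=u\otimes x1_{\psi}=\psi(x)\,u\otimes 1_{\psi}$ (here $x\in\n_{\overline 0}\subseteq\h$). Iterating gives $(x-\psi(x))^{k}(u\otimes 1_{\psi})=((\ad x)^{k}u)\otimes 1_{\psi}$, which vanishes for $k$ large since $\n$ is nilpotent and hence the adjoint action of $\n_{\overline 0}$ on $U(\n)$ is locally nilpotent. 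This one-line identity is the content you were trying to extract from the PBW expansion; I recommend replacing the filtration argument with it.
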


\begin{proof}
Consider $I(\psi)$.
The action is well-defined since $\psi([\n_{\overline{0}},\n_{\overline{0}}])=0$.

To show that the action is locally nilpotent on $I(\psi)$, consider a monomial $y \otimes 1_{\psi} \in I(\psi)$.  Then,
$$(x- \psi(x))(y \otimes 1_{\psi}) = [x,y] \otimes 1_{\psi} + y \otimes (x- \psi(x))1_{\psi}= [x,y] \otimes 1_{\psi} .$$
Since the induced adjoint action of $\n_{\overline{0}}$ on $U(\n)$ is locally nilpotent, it follows that for $k$ sufficiently large
$$
(x- \psi(x))^k(y \otimes 1_{\psi})= [x,y]^k\otimes 1_{\psi} =0.
$$
\end{proof}

\begin{cor} \label{cor:finitedecomp}
Let $V \in \F$.  Then $$V= \bigoplus_{\psi \in \lm} V^{\psi},$$ where $V^{\psi}$ is the maximal submodule such that, for any irreducible module $L$, $[V^{\psi}: L] \neq 0$ only if $L \cong I(\psi), \Pi(I(\psi))$.
\end{cor}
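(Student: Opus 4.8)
The plan is to build the decomposition directly from Lemma \ref{lem;n0nilpotent}, promoted from an $\n_{\overline 0}$-statement to an $\n$-statement via Proposition \ref{lem:n irreducible modules}. First I would record the key consequence of Lemma \ref{lem;n0nilpotent}: if $L$ is an irreducible object of $\F$, then by Proposition \ref{lem:n irreducible modules} we have $L \cong I(\psi)$ (or $\Pi(I(\psi))$) for a unique $\psi \in \lm$, and Lemma \ref{lem;n0nilpotent} says the operator $x - \psi(x)$ acts locally nilpotently on $I(\psi)$ for every $x \in \n_{\overline 0}$; since $L$ is finite-dimensional this means $x - \psi(x)$ is actually nilpotent, so $\psi(x)$ is the unique generalized eigenvalue of $x$ on $L$. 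Hence $\psi$ is recovered from $L$ as the character by which $\n_{\overline 0}$ acts ``up to nilpotents,'' and in particular $I(\psi)$ and $I(\psi')$ (and their parity twists) have no common subquotient when $\psi \neq \psi'$.

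Next I would set $V^\psi$ to be the sum of all submodules $W \subseteq V$ all of whose composition factors are isomorphic to $I(\psi)$ or $\Pi(I(\psi))$; this is manifestly a submodule, and it is the maximal such submodule, so the description in the statement is met once we show the $V^\psi$ sum to $V$ and are independent (direct sum). For the direct sum claim: if $v \in V^{\psi} \cap \sum_{\psi' \neq \psi} V^{\psi'}$, then $\cc$-span$(U(\n)v)$ is a finite-dimensional module lying in $V^{\psi}$ whose composition factors are among the $I(\psi')$, $\psi' \neq \psi$; by the previous paragraph the only such module is $0$. For the spanning claim, I would induct on $\dim V$ (valid since $V \in \F$): pick an irreducible submodule $L \subseteq V$, say $L \cong I(\psi)$ or $\Pi(I(\psi))$, so $L \subseteq V^{\psi}$; the quotient $V/L$ decomposes as $\bigoplus_{\psi'} (V/L)^{\psi'}$ by the inductive hypothesis, and I would lift this decomposition back to $V$ by noting that the preimage of $(V/L)^{\psi'}$ for $\psi' \neq \psi$ has all composition factors of type $\psi'$ (the factor $L$ contributes type $\psi$, which cannot occur in $(V/L)^{\psi'}$), hence lies in $V^{\psi'}$, while the preimage of $(V/L)^{\psi}$ lies in $V^{\psi}$. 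Combining, $V = \sum_{\psi'} V^{\psi'}$.

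Alternatively — and perhaps more cleanly — one can get the decomposition at one stroke from generalized eigenspace theory for the commuting family $\{x \mid x \in \n_{\overline 0}\}$ acting on the finite-dimensional space $V$: decompose $V = \bigoplus_{\psi \in \n_{\overline 0}^*} V_{[\psi]}$ into simultaneous generalized eigenspaces, observe each $V_{[\psi]}$ is $\n_{\overline 0}$-stable and in fact $\n$-stable (because $\n_{\overline 1}$ shifts eigenvalues by $\psi([\n_{\overline 0},\n_{\overline 1}]) \subseteq \psi([\n_{\overline 0},\n_{\overline 0}])$-type brackets which vanish — this uses that $\n$ is nilpotent and a short commutator computation), note that only finitely many $\psi$ occur and each such $\psi$ restricts to an element of $\lm$ by Lemma \ref{lem;n0nilpotent} applied to the composition factors, and identify $V_{[\psi]}$ with $V^{\psi}$ using that the composition factors of $V_{[\psi]}$ are exactly the irreducibles on which $\n_{\overline 0}$ acts with generalized eigenvalue $\psi$, i.e. $I(\psi)$ and $\Pi(I(\psi))$.

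The main obstacle is the passage from ``local generalized eigenspaces for $\n_{\overline 0}$'' to ``$\n$-submodules'': one must check that the odd part $\n_{\overline 1}$ preserves each generalized $\n_{\overline 0}$-eigenspace $V_{[\psi]}$, equivalently that $[\n_{\overline 0},\n_{\overline 1}]$ acts nilpotently compatibly with the $\psi$-grading. This follows from nilpotency of $\n$ together with the fact that $\psi$ kills $[\n_{\overline 0},\n_{\overline 0}]$, but it is the one place where a genuine (if short) computation with the super-bracket is required rather than pure module-theoretic bookkeeping. Everything else is a routine induction on dimension or a standard application of the fact that a module whose composition factors are all of one isomorphism type (up to parity) cannot share a subquotient with one whose factors are of a different type.
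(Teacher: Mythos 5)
Your second (``alternative'') argument is essentially the paper's own proof: the paper defines $V^{\psi}$ as the simultaneous generalized $\psi$-eigenspace for $\n_{\overline 0}$, invokes the primary decomposition of the finite-dimensional $\n_{\overline 0}$-module $V$, and then checks $\n_{\overline 1}$-stability by the same commutator computation as in Lemma \ref{lem;n0nilpotent}, before identifying the pieces via Proposition \ref{lem:n irreducible modules}. You correctly single out the $\n_{\overline 1}$-stability check as the crux. One caution: your stated reason for it --- that $\n_{\overline 1}$ ``shifts eigenvalues by $\psi([\n_{\overline 0},\n_{\overline 1}])$-type brackets which vanish'' --- does not quite parse. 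The actual mechanism is that $(x-\psi(x))^k(yv)$ expands as $\sum_j \binom{k}{j}\bigl((\ad x)^j y\bigr)\bigl((x-\psi(x))^{k-j}v\bigr)$, and for $k$ large every term dies because $\ad x$ is nilpotent on $\n$ (nilpotency of $\n$) while $x-\psi(x)$ is nilpotent on the generalized eigenspace. You do name nilpotency of $\n$ as the ingredient, so this is a matter of writing the computation out rather than a missing idea.

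Your first argument, however, has a genuine gap at the lifting step. The preimage $W$ of $(V/L)^{\psi'}$ under $V\to V/L$ (for $\psi'\neq\psi$) contains $L$, so it has a composition factor of type $\psi$; it therefore does \emph{not} ``have all composition factors of type $\psi'$'' and need not lie in $V^{\psi'}$. What the step implicitly assumes is that the extension $0\to L\to W\to (V/L)^{\psi'}\to 0$ splits, i.e.\ that there are no nontrivial extensions between irreducibles of different types --- but that splitting is precisely the substance of the corollary, so the induction is circular as written. (Concretely, if $V$ were a non-split extension of $I(\psi')$ by $I(\psi)$ with $\psi\neq\psi'$, then $V^{\psi'}=0$ and the claimed decomposition would fail; the generalized-eigenspace argument is exactly what rules such extensions out.) Since your alternative route supplies the missing splitting for free, you should drop the first argument and run only the second.
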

\begin{proof}
Viewed as an $\n_{\overline{0}}$-module, $\displaystyle V=\bigoplus_{\psi \in \lm} V^{\psi}$ where 

$$V^{\psi}=\{v \in V \mid \mbox{for $x\in \n_{\overline{0}}$, $(x-\psi(x))^kv=0$ for $k>>0$}\}.$$
Similar to the argument in the previous proof,  for $v \in V^{\psi}$, $y \in \n_{\overline{1}}$, and $x \in \n_{\overline{0}}$, we have that $(x-\psi(x))^kyv=0$ for $k$ sufficiently large.  Thus, $V^{\psi}$ is in fact a $\n$-submodule of $V$.  The result then follows from Lemma \ref{lem:n irreducible modules}.
\end{proof}

Based on this decomposition, we define $\F(\psi)$ (or $ \F(\n, \psi)$) to be the full subcategory of $\F$ such that $V \in \F(\psi)$ implies $[V: L]=0$ unless $L \cong I(\psi), \Pi(I(\psi))$.

\section{Whittaker categories for Lie superalgebras} \label{section:WhittakerCategories}
In this section, we define Whittaker categories for Lie superalgebras and present a category decomposition.  
Let $\g$ be a finite-dimensional Lie superalgebra and $\n \subseteq \g$ a nilpotent sub-superalgebra.  Following the definition of Batra and Mazorchuk \cite{BM2010}, we define a {\it Whittaker category} $\W(\g, \n)$ for the pair $(\g, \n)$ to be the full category containing $\g$-modules $V$ such that $\n$ acts locally finitely on $V$. When there is no confusion, we may abbreviate $\W(\g, \n)$ as $\W$.

Let $V \in \F(\n)$.  Since the adjoint action of $\n$ is locally finite on $U(\g)$, ${\rm Ind}_{\n}^{\g}(V) \in \W(\g, \n)$. For any $M \in \W(\g, \n)$, $M$ has a filtration by finite-dimensional $\n$-modules: $0=M_0 \subseteq M_1 \subseteq \cdots$, $\bigcup_i M_i =M$.  (To see this, recall that we only consider modules with a countable basis.) Then, for an irreducible $\n$-module $L$, define
$$
[M: L]= \mbox{sup}_{i \geq 0} \{[M_i:L] \}.
$$  


\begin{lem}
Fix $\psi \in \lm$, and let $V \in \F(\n, \psi)$.  View ${\rm Ind}_{\n}^{\g} V$ as an $\n$-module; then for any irreducible $\n$-module $L$, $[{\rm Ind}_{\n}^{\g} V: L] = 0$ unless $L \cong I(\psi), \Pi(I(\psi))$.
\end{lem}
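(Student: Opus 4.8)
The plan is to use the PBW theorem for Lie superalgebras to reduce the $\n$-module structure of $\mathrm{Ind}_\n^\g V$ to that of a direct sum of copies of $V$ tensored with symmetric-type factors, and then apply Corollary \ref{cor:finitedecomp} (or rather the local nilpotency underlying it). Concretely, I would first fix a PBW basis for $U(\g)$ adapted to the decomposition $\g = \n \oplus \mathfrak{c}$ for some graded complement $\mathfrak{c}$ of $\n$ in $\g$, so that $U(\g) \cong U(\mathfrak{c}) \otimes U(\n)$ as a right $U(\n)$-module (via the adjoint action, up to a filtration). Then $\mathrm{Ind}_\n^\g V = U(\g) \otimes_{U(\n)} V$ acquires, as a vector space, the form $U(\mathfrak{c}) \otimes V$. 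The key point is that the adjoint action of $\n$ on $U(\g)$ is locally finite, so $U(\g)$ has an exhausting filtration by finite-dimensional $\ad\n$-stable subspaces $U_j$; pushing this forward gives an exhausting filtration of $\mathrm{Ind}_\n^\g V$ by finite-dimensional $\n$-submodules $N_j := \mathrm{image of }\ U_j \otimes V$.

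Next I would analyze each finite-dimensional $\n$-module $N_j$ using Corollary \ref{cor:finitedecomp}: it decomposes as $\bigoplus_{\psi' \in \lm} N_j^{\psi'}$, where $N_j^{\psi'}$ is the generalized eigenspace (in the $\n_{\overline 0}$ sense, extended to an $\n$-submodule) for $\psi'$. So the claim $[\mathrm{Ind}_\n^\g V : L] = 0$ unless $L \cong I(\psi), \Pi I(\psi)$ is equivalent to showing $N_j^{\psi'} = 0$ for all $\psi' \neq \psi$ and all $j$. By the description of $N_j^{\psi'}$ as the subspace on which each $x - \psi'(x)$ ($x \in \n_{\overline 0}$) acts locally nilpotently, it suffices to show that on all of $\mathrm{Ind}_\n^\g V$, every $x \in \n_{\overline 0}$ acts as $\psi(x)$ plus a locally nilpotent operator — i.e., that $x - \psi(x)$ is locally nilpotent on $\mathrm{Ind}_\n^\g V$.

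To see the local nilpotency, take a typical element $u \otimes v$ with $u \in U(\g)$, $v \in V$, and compute, for $x \in \n_{\overline 0}$,
$$
(x - \psi(x))(u \otimes v) = (\ad x)(u) \otimes v + u \otimes (x - \psi(x))v .
$$
The first term lowers filtration degree in the $\ad\n$-filtration of $U(\g)$ eventually (since $\ad x$ is nilpotent on each finite-dimensional $\ad\n$-stable piece — indeed $\n$ is nilpotent, so $\ad x$ acts nilpotently on $\n$ and hence, by Engel/the nilpotency of the induced action, on each $U_j$), and the second term is controlled because $v$ lies in $V \in \F(\n,\psi)$, on which $x - \psi(x)$ is already locally nilpotent by the definition of $\F(\n,\psi)$ together with Lemma \ref{lem;n0nilpotent}. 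A standard induction on filtration degree (the operator $x - \psi(x)$ is a sum of two commuting-enough locally nilpotent operators — more carefully, one filters $\mathrm{Ind}_\n^\g V$ by $N_j$ and shows $x - \psi(x)$ maps $N_j$ into $N_j$ with nilpotent action modulo $N_{j-1}$) then yields that $(x-\psi(x))^k(u \otimes v) = 0$ for $k \gg 0$. Since this holds for every $x \in \n_{\overline 0}$, each $N_j$ lies entirely in $N_j^{\psi}$, so $[\mathrm{Ind}_\n^\g V : I(\psi')] = 0$ for $\psi' \neq \psi$, and by Proposition \ref{lem:n irreducible modules} the only remaining possibilities are $I(\psi)$ and $\Pi I(\psi)$.

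**The main obstacle** I anticipate is bookkeeping the interaction between the two filtrations: the $\ad\n$-filtration on $U(\g)$ and the statement "locally nilpotent" for $x - \psi(x)$. Neither term in the displayed commutator formula is individually filtration-preserving in an obvious single grading, so the cleanest route is to work with the submodule filtration $\{N_j\}$ of $\mathrm{Ind}_\n^\g V$ itself, observe it is $\n$-stable and exhausting with finite-dimensional quotients, and argue inside each finite-dimensional $N_j$ where Corollary \ref{cor:finitedecomp} applies directly — reducing the whole statement to checking that $N_j = N_j^\psi$, which follows once we know $x - \psi(x)$ is locally nilpotent, a property inherited from $V$ and from the nilpotency of $\ad x$ on $U(\g)$.
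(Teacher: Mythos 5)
Your proposal follows essentially the same route as the paper: reduce the statement to showing that $x-\psi(x)$ acts locally nilpotently on $\mathrm{Ind}_{\n}^{\g}V$ for every $x\in\n_{\overline 0}$, via the Leibniz formula $(x-\psi(x))(u\otimes v)=(\ad x)(u)\otimes v+u\otimes(x-\psi(x))v$ together with the $\ad\n$-stable finite-dimensional filtration of $U(\g)$, and then invoke the classification of irreducible finite-dimensional $\n$-modules by elements of $\lm$. The extra PBW bookkeeping with a complement $\mathfrak{c}$ is harmless but unnecessary; the paper works directly with the filtration $N_j$ you describe in your last paragraph.

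One caution about your justification of the key nilpotency claim: you write that $\ad x$ is nilpotent on each $U_j\subseteq U(\g)$ ``since $\n$ is nilpotent, so $\ad x$ acts nilpotently on $\n$.'' That inference is a non sequitur: nilpotency of $\ad x$ on $\n$ (or on $U(\n)$) says nothing about its action on $U(\g)$. For instance, for $\g=\fsl_2$ and $\n=\cc h$ the subalgebra $\n$ is nilpotent but $\ad h$ is semisimple, not nilpotent, on $U(\g)$, and the conclusion of the lemma actually fails for that pair. What is really needed is that $\ad x$ be locally \emph{nilpotent} (not merely locally finite) on $U(\g)$ for $x\in\n_{\overline 0}$ — a hypothesis that holds for the pairs $(\g,\n^{+})$ used in the rest of the paper, and which the paper's own one-line appeal to local finiteness also quietly elides. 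With that hypothesis made explicit, your argument is complete and matches the paper's.
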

\begin{proof}
View ${\rm Ind}_{\n}^{\g} V$ as an $\n_{\overline 0}$-module with action given by $x . v= (x-\psi(x)) v$. It follows from Lemma \ref{lem;n0nilpotent} that the action of $\n_{\overline{0}}$ on $V$ is locally nilpotent.  Since the adjoint action of $\n_{\overline 0}$ on $U(\g)$ is locally finite, a similar argument to the proof of Lemma \ref{lem;n0nilpotent} shows that the action of $\n_{\overline 0}$ on all of ${\rm Ind}_{\n}^{\g} V$ is locally nilpotent.

Now suppose that $L$ is an irreducible subquotient of ${\rm Ind}_{\n}^{\g} V$. We know that for $x \in \n_{\overline 0}$, $(x-\psi(x))$ acts nilpotently on $L$.  Since $\psi$ determines an irreducible $\n$-module up to $\Pi$, it must be that $L \cong I(\psi)$ or $L \cong \Pi(I(\psi))$.  
\end{proof}

\begin{thm} \label{theorem: category decomposition}
Let $M \in \W(\g, \n)$.  Then,
$$
M=\bigoplus_{\psi \in \lm} M^{\psi},
$$
where $M^{\psi}$ is the maximal $\g$-submodule of $M$ such that, for an irreducible $\n$-module $L$, $[M:L]=0$ unless $L \cong I(\psi), \Pi(I(\psi))$.  

\end{thm}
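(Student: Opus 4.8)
The plan is to first decompose $M$ as a module over the even part $\n_{\overline{0}}$, and then to upgrade that decomposition to one by $\g$-submodules. Since $\n$ acts locally finitely on $M$, so does $\n_{\overline{0}}$, and every $v\in M$ generates a finite-dimensional $\n$-submodule $W=U(\n)v$. As $\n_{\overline{0}}$ is a nilpotent Lie algebra, $M$ decomposes into generalized weight spaces for $\n_{\overline{0}}$; every generalized weight of a finite-dimensional $\n_{\overline{0}}$-module vanishes on $[\n_{\overline{0}},\n_{\overline{0}}]$, so extending such a weight by zero on $\n_{\overline{1}}$ identifies it with an element of $\lm$. Concretely, for $\psi\in\lm$ set
$$
M^{\psi}=\{\, v\in M \mid \text{for each } x\in\n_{\overline{0}},\ (x-\psi(x))^{k}v=0 \text{ for } k\gg 0 \,\}.
$$
Applying Corollary~\ref{cor:finitedecomp} to $W=U(\n)v$, and using that the summand $W^{\psi}$ there is exactly the generalized $\psi$-weight space of $W$ for $\n_{\overline{0}}$, we get $W=\bigoplus_{\psi}W^{\psi}\subseteq\sum_{\psi}M^{\psi}$; hence $M=\bigoplus_{\psi\in\lm}M^{\psi}$, the sum being direct because these are generalized weight spaces. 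The computation $(x-\psi(x))yv=[x,y]v+y(x-\psi(x))v$ for $y\in\n_{\overline{1}}$ (as in the proof of Corollary~\ref{cor:finitedecomp}, using nilpotence of $\n$) shows each $M^{\psi}$ is an $\n$-submodule, and each finite-dimensional piece $W^{\psi}$ lies in $\F(\n,\psi)$, so $[M^{\psi}:L]=0$ unless $L\cong I(\psi)$ or $\Pi(I(\psi))$.

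The crucial step is to prove each $M^{\psi}$ is a $\g$-submodule. Fix $v\in M^{\psi}$, a homogeneous $g\in\g$, and $x\in\n_{\overline{0}}$. Since $d(x)=\overline{0}$ we have $x(gv)=(\ad x)(g)\,v+g\,(xv)$, and iterating this (induction on $N$) gives
$$
(x-\psi(x))^{N}(gv)=\sum_{k=0}^{N}\binom{N}{k}\big((\ad x)^{k}g\big)\cdot\big((x-\psi(x))^{N-k}v\big).
$$
As $\g$ is finite-dimensional, $\ad x$ acts nilpotently on it, so $(\ad x)^{a}g=0$ for some $a$; and $(x-\psi(x))^{b}v=0$ for some $b$ because $v\in M^{\psi}$. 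For $N\ge a+b$ every summand vanishes, so $gv\in M^{\psi}$. Hence $M=\bigoplus_{\psi\in\lm}M^{\psi}$ is a decomposition by $\g$-submodules.

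Finally I would identify $M^{\psi}$ with the maximal $\g$-submodule $N$ of $M$ satisfying $[N:L]=0$ unless $L\cong I(\psi),\Pi(I(\psi))$. The module $M^{\psi}$ has this property by the previous step. Conversely, any such $N$ decomposes over $\n_{\overline{0}}$ as $N=\bigoplus_{\phi}N^{\phi}$, and for $\phi\ne\psi$ the module $N^{\phi}$ has all its irreducible $\n$-subquotients isomorphic to $I(\phi)$ or $\Pi(I(\phi))$, which by Proposition~\ref{lem:n irreducible modules} cannot be isomorphic to $I(\psi)$ or $\Pi(I(\psi))$; hence $N^{\phi}=0$ and $N=N^{\psi}\subseteq M^{\psi}$, giving both existence and maximality. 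The main obstacle is the $\g$-stability of $M^{\psi}$: it is precisely here that one needs the adjoint action of $\n$ on $\g$ to be locally nilpotent rather than merely locally finite, so that the cross-terms $(\ad x)^{k}g$ in the binomial identity eventually vanish.
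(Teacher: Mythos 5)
Your construction of $M^{\psi}$ takes a genuinely different (more hands-on) route than the paper's. The paper defines $M^{\psi}$ as the sum of the images in $M$ of all $\g$-module maps from induced modules $\Ind_{\n}^{\g}V$ with $V\in\F(\n,\psi)$, so that $\g$-stability is automatic, and then identifies $M^{\psi}$ with a generalized eigenspace using the lemma immediately preceding the theorem. You instead define $M^{\psi}$ outright as the generalized $\psi$-eigenspace for $\n_{\overline{0}}$ and prove $\g$-stability by hand via the binomial identity for $(x-\psi(x))^{N}(gv)$. Both arguments then apply Corollary \ref{cor:finitedecomp} to $U(\n)v$ to see that the $M^{\psi}$ exhaust $M$, and both get directness from the fact that sums of distinct generalized eigenspaces are direct; your closing identification of $M^{\psi}$ as the maximal submodule of type $\psi$ is actually spelled out more carefully than in the paper. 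So the two proofs are different packagings of the same mechanism, and yours is the more self-contained.

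The one step that does not hold up as written is the claim that $\ad x$ acts nilpotently on $\g$ ``as $\g$ is finite-dimensional.'' Finite-dimensionality gives only that $\ad x$ is locally finite, and an element of a nilpotent subalgebra need not be ad-nilpotent in $\g$: a Cartan subalgebra $\h$ is abelian, hence nilpotent, yet its elements act semisimply on $\g$. (And the theorem really does fail without some extra hypothesis: for $\n=\h$ a weight module such as a Verma module lies in $\W(\g,\h)$, but its only $\g$-submodule supported on a single $\psi$ is zero.) What your binomial computation needs --- and what you correctly isolate in your final sentence --- is local nilpotence of the adjoint action of $\n_{\overline{0}}$ on $\g$, equivalently on $U(\g)$. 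This holds in every situation where the theorem is later applied, since there $\n\subseteq\n^{+}$ consists of ad-nilpotent elements, and it is the hypothesis built into the Batra--Mazorchuk notion of Whittaker pair that the paper is following; note that the paper's own preparatory lemma leans on the same point, invoking only local finiteness of the adjoint action on $U(\g)$ where the argument actually uses local nilpotence. So the fix is simply to assume (or verify in context) that hypothesis explicitly rather than deriving it from $\dim\g<\infty$; with that change your proof is complete.
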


\begin{proof}
Define $M^{\psi}$ to be the sum of the images of all modules ${\rm Ind}_{\n}^{\g}V$ where $V \in \F(\n, \psi)$.  Then $M^{\psi}$ is a submodule of $M$, since it is a sum of submodules.   Moreover, for any $v \in M^{\psi}$ and $x \in \n_{\overline 0}$, $(x-\psi(x))^kv=0$ for $k>>0$.  Since sums of distinct generalized eigenspaces are direct, this implies that $\sum_{\psi \in \lm} M^{\psi}$ is a direct sum.

Now we argue that $\bigoplus_{\psi \in \lm}M^{\psi} =M$.  Let $v \in M$ and define $V = U(\n)v$.  By definition of $\W(\g, \n)$, $V$ is finite-dimensional and so by Corollary \ref {cor:finitedecomp} $V = \bigoplus_{\psi \in \lm}V^{\psi}$.  By definition of $M^{\psi}$, $V^{\psi} \subseteq M^{\psi}$.  This implies that $v \in V \subseteq  \bigoplus_{\psi \in \lm}M^{\psi}$, which completes the proof.
\end{proof}

Based on this result, we define $\W(\g, \n, \psi)$ (or $\W(\psi)$) as the full subcategory of $\W(\g, \n)$ where $M \in \W(\g, \n, \psi)$ implies that, for an irreducible $\n$-module $L$, $[M:L]=0$ unless $L \cong I(\psi), \Pi(I(\psi))$. 

\begin{lem}\label{lem: any module in Whittaker cat contains Whittaker vector}
Let $\psi \in \lm$ and $V \in \W(\psi)$.  Viewing $V$ as an $\n$-module,  $V$ contains a submodule isomorphic to $I(\psi)$ or $\Pi(I(\psi))$.
\end{lem}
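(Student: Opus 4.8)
The plan is to produce the desired $\n$-submodule inside the cyclic $\n$-module generated by a single nonzero vector. First I would pick any nonzero $v \in V$ (the assertion should of course be read with $V \neq 0$, otherwise there is nothing to prove) and set $V' = U(\n)v$. Since $V \in \W(\g,\n)$, the action of $\n$ on $V$ is locally finite, so $V'$ is a finite-dimensional $\n$-module, and it is nonzero because $v \in V'$.

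Next I would extract an irreducible $\n$-submodule $L \subseteq V'$: a nonzero finite-dimensional $\n$-module has a composition series, and any minimal nonzero $\n$-submodule of $V'$ (equivalently, the first nonzero term of such a series) is irreducible. By Proposition \ref{lem:n irreducible modules}, $L \cong I(\psi')$ or $L \cong \Pi(I(\psi'))$ for some $\psi' \in \lm$.

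Then I would invoke the hypothesis $V \in \W(\psi)$ to force $\psi' = \psi$. Indeed, $V'$ is a finite-dimensional $\n$-submodule of $V$, hence may be taken as the first step $M_1$ of a filtration of $V$ by finite-dimensional $\n$-modules; by the definition of $[V:L]$ as $\sup_{i\ge 0}[M_i:L]$ over such a filtration, we get $[V:L] \geq [V':L] \geq 1 > 0$. Since $V \in \W(\psi)$, the only irreducible $\n$-modules occurring in $V$ are $I(\psi)$ and $\Pi(I(\psi))$, so $L \cong I(\psi)$ or $L \cong \Pi(I(\psi))$. As $L \subseteq V' \subseteq V$ as $\n$-modules, $L$ is the required submodule.

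There is no substantive obstacle here; the only point requiring a line of care is the last one, namely relating an irreducible $\n$-module that appears as an honest submodule of $V$ to the multiplicity-zero condition defining $\W(\psi)$, i.e.\ checking via the filtration definition of composition multiplicity in $\W(\g,\n)$ that such an $L$ has $[V:L]\neq 0$.
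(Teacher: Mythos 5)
Your argument is correct and follows essentially the same route as the paper, which takes $U(\n)v$ for $0\neq v\in V$, notes it is finite-dimensional, and observes that its nonzero socle must be a direct sum of copies of $I(\psi)$ and $\Pi(I(\psi))$. Your extra step identifying the parameter $\psi'$ of the irreducible submodule with $\psi$ via the filtration definition of $[V:L]$ just makes explicit what the paper leaves implicit.
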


\begin{proof}
Let $0 \neq v \in V$.  By definition of the category, $U(\n)v$ is a finite-dimensional $\n$-module.  Therefore, $U(\n)v$ has a nonzero socle which must be a direct sum of $I(\psi), \Pi(I(\psi))$. 
\end{proof}

Suppose $\psi \in \lm$ so that $\psi([\n_{\overline{1}},\n_{\overline{1}}])=0$ (and thus $I(\psi)$ is one-dimensional).  For $V \in\W(\psi)$  define
\begin{equation} \label{defn:space of Whittaker vectors for psi}
\Wh_{\psi}(V)=\{v\in V\mid xv=\psi (x) v \textrm{ for all } x\in\n \}. \end{equation}

The vectors in $\Wh_{\psi}(V)$ are the Whittaker vectors of $V$.  (Thus, if $w\in \Wh_{\psi}(V),$ then $\cc w$ is a one-dimensional irreducible $\n$-module.) If $V$ is generated by a Whittaker vector $w$, then we call $V$ a {\it Whittaker module}.

\begin{cor}\label{cor:Whittaker module with 1-dim'l space of Whittaker vectors is irreducible}
Let $\psi \in \lm$ so that $\psi([\n_{\overline{1}},\n_{\overline{1}}])=0$ and let $V \in \W(\psi)$ be a Whittaker module. If $\dim \Wh_\psi(V)=1$, then $V$ is irreducible.
\end{cor}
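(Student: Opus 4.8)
The plan is to prove irreducibility directly: show that every nonzero $\g$-submodule $N$ of $V$ equals $V$. Fix a Whittaker vector $w$ generating $V$, so that $V = U(\g)w$; note that $w \neq 0$ and $V \neq 0$, since $\dim \Wh_\psi(V) = 1$ forces $\Wh_\psi(V)$ to be nonzero.

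First I would verify that $N$ again lies in $\W(\g,\n,\psi)$. The action of $\n$ on $N$ is locally finite, being the restriction of the locally finite action on $V$; and for every irreducible $\n$-module $L$ one has $[N:L] \le [V:L]$, so $[N:L] = 0$ unless $L \cong I(\psi)$ or $L \cong \Pi(I(\psi))$. Hence $N \in \W(\psi)$, and Lemma~\ref{lem: any module in Whittaker cat contains Whittaker vector} applies: as an $\n$-module, $N$ contains a submodule isomorphic to $I(\psi)$ or to $\Pi(I(\psi))$. Since $\psi([\n_{\overline 1},\n_{\overline 1}]) = 0$, the module $I(\psi)$ is one-dimensional, and therefore so is $\Pi(I(\psi))$; using also that $\psi(\n_{\overline 1}) = 0$, a generator of such a one-dimensional submodule is killed by $x - \psi(x)$ for every $x \in \n$. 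Thus $N$ contains a nonzero vector $w' \in \Wh_\psi(N) \subseteq \Wh_\psi(V)$.

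Finally, because $\dim \Wh_\psi(V) = 1$ and $0 \neq w \in \Wh_\psi(V)$, we must have $w' = c\,w$ for some $c \in \cc^\times$. Consequently $N \supseteq U(\g)w' = U(\g)w = V$, so $N = V$, proving that $V$ is irreducible. The one delicate point — the step I would be most careful about — is the passage from ``$N$ contains a copy of $I(\psi)$ or $\Pi(I(\psi))$'' to ``$N$ contains a Whittaker vector''; this is exactly where the hypothesis $\psi([\n_{\overline 1},\n_{\overline 1}]) = 0$ is essential, as it guarantees that these irreducibles are one-dimensional with $\n$ acting by $\psi$ (the sign ambiguity in the definition of $\Pi$ being harmless since $\psi$ vanishes on odd elements). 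Everything else is a routine generation-and-dimension-count argument patterned on Kostant's proof in the Lie algebra case.
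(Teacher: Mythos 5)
Your argument is correct and follows the same route as the paper: restrict to a nonzero submodule, invoke Lemma~\ref{lem: any module in Whittaker cat contains Whittaker vector} to produce a Whittaker vector inside it (using that $\psi([\n_{\overline 1},\n_{\overline 1}])=0$ makes $I(\psi)$ one-dimensional), and conclude from $\dim\Wh_\psi(V)=1$ that this vector is proportional to the cyclic generator. You simply spell out the intermediate verifications (that the submodule lies in $\W(\psi)$, and why a copy of $I(\psi)$ yields a genuine Whittaker vector) that the paper leaves implicit.
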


\begin{proof}
Let $W$ be a non-trivial  $\g$-submodule of $V$. Lemma  \ref{lem: any module in Whittaker cat contains Whittaker vector} implies that $W$ contains a non-zero Whittaker vector $w$.  Since $\dim \Wh_\psi(V)=1$, it follows that $w$ must generate $V$. 
\end{proof}

\section{Whittaker modules for basic classical Lie superalgebras of type I}\label{section:TypeI}
For the remainder of the paper we restrict $\g$ to a simple basic classical Lie superalgebra of type I,  that is $\g=\fsl(m,n)$ $(n>m\geq1)$,  $\fpsl(n,n)$ $(n\geq3)$, or $\g=\fosp(2,2n)$ $(n\geq1)$. Although we will not consider this case here, our results can easily be extended to $\g=\fgl(m,n)$. For a natural choice of nilpotent subalgebra $\n \subseteq \g$, the irreducible $\n$-modules are one-dimensional.  With this in mind, we study Whittaker modules and Whittaker vectors for these Lie superalgebras.  

\subsection{Basic Classical Lie superalgebras of Type I} \label{section: Basic classical Lie algebras of type I}
The basic classical Lie superalgebras of type I admit a $\Z$-grading $\g=\g_{-1}\oplus\g_0\oplus\g_1$, where $\g_{\0}=\g_0$ and $\g_{\1}=\g_{-1}+\g_1$,  and a non-degenerate even invariant supersymmetric bilinear form $(\cdot,\cdot)$. Both superalgebras $\g_{\pm1}$ are supercommutative and
the exterior algebras $\bigwedge\g_{\pm1}$ are naturally embedded in $U(\g)$. Moreover, $U(\g)=\bigwedge(\g_{-1})U(\g_0)\bigwedge(\g_1)$. As $\ad
\g_0$-modules, $\g_{\pm1}$ are irreducible and dual to one another. 
The above $\Z$-grading can be extended to a $\Z$-grading on $U(\g)$. Moreover, if $r=\mbox{dim}\g_1$, then $U(\g)_{\pm r}=U(\g_0)\bigwedge\g^r_{\pm 1}$ and $U(\g)_s=0$ for $|s|>r$.

Fix a triangular decomposition $\fg_0=\n^-_0\oplus\h\oplus\n^+_0$ of $\g_0$.  This corresponds to a triangular decomposition $\fg=\n^-\oplus\h\oplus\n^+$, where $\n^-=\n_0^- \oplus \g_{-1}$ and $\n^+=\n_0^+ \oplus \g_{1}$.  

Let $\Delta$ be a root system for $\g$, with a set of simple roots $\pi$, and let let $\Delta^{+}$ be the set of positive roots. Set $\Delta^{-}=-\Delta^{+}.$ 
Denote by $\Delta_{\0}$ the set of non-zero even roots of $\g$ and by $\Delta_{\1}$ the set of odd roots of $\g$. Set $\Delta_{\0}^{\pm}=\Delta_{\0}\cap\Delta^{\pm}$ and $\Delta_{\1}^{\pm}=\Delta_{\1}\cap\Delta_{\pm}.$ Also, let $\pi_{\0}$ be the set of simple even roots. 

Let $W$ be the Weyl group of $\g_0.$ Put $\rho_{\0}=\frac{1}{2}\sum_{\alpha\in\Delta_{\0}^{+}}\alpha$ as usual and set $w\cdot\lambda=w(\lambda+\rho_{\0})-\rho_{\0}$ for all $w\in W,$ $\lambda\in\h^{*}.$

For any $S\subseteq \pi,$ let $Q^{S}$ be the free abelian group generated by $S$. Set $\displaystyle Q_{+}^S=\sum_{\alpha\in S}\Z_{\geq0}\alpha$, $\Delta^{S}=Q^{S}\cap \Delta$ and $\Delta^{S}_{+}=Q^{S}\cap \Delta_{+}.$ 

Let  $\lm = \{ \psi \in (\n^{+})^* \mid \psi (\g_{1})=0, \psi([\n^{+}_{0},\n^{+}_{0}])=0 \}$ and let $0 \neq \psi\in\lm$.  Following \cite[\S2.3]{Ko1978}  and since $\psi$ is completely determined by its restriction to $\n_{0}^{+}$, we will say that $\psi$ is {\it non-singular} if $\psi|_{\g_{\alpha}}\neq0$  for all $\alpha\in\pi_{\0}$. Otherwise, we will say that $\psi$ is {\it singular}. Set $S_{\psi}=\{\alpha\in\pi\mid \psi|_{\g_{\alpha}}\neq0\}$. By the  definition of $\mathcal{L}$, we observe that $S_{\psi}\subseteq \pi_{\0}$. This implies that  $\Delta^{S_{\psi}}\subseteq \Delta_{\0}$. Let $P_{\0}=\Delta_{\0}^{+}\cup (-\Delta_{+}^{S_{\psi}})$. Then $P_{\0}$ is a parabolic subset of $\Delta_{\0}$ (cf. \cite[Chap. VI, \S1, no. 7, Prop. 20]{Bourbaki1}), and it is easy to see that $P_{\0}\cap(-P_{\0})=\Delta^{S_{\psi}}$. It follows from \cite[Chap. VIII, \S3, no. 4, Prop. 13]{Bourbaki2}  that $\displaystyle\g_{\psi}=\h\oplus\bigoplus_{\alpha\in\Delta^{S_{\psi}}}\g_{\alpha}$ is a finite-dimensional reductive Lie subalgebra of $\g_{0}$. Let $W_{\psi}$ be the Weyl group of $\g_{\psi}$. 

Let $\g_{\psi}=\fs_{\psi}\oplus Z,$ where $\fs_{\psi}:=[\g_{\psi,}\g_{\psi}]$ is semisimple and $\displaystyle Z$ is the center of $\g_{\psi}.$  Note that  \begin{equation}
Z=\{h\in\h\mid\alpha(h)=0\quad\mbox{\rm for all}\quad\alpha\in\Delta^{S_{\psi}}\}+\mathfrak{z},
\end{equation}where $\mathfrak{z}$ is the center of $\g_{0}$. Moreover, $\h=\h_{\psi}\oplus Z,$ where  $\h_{\psi}=\h\cap\fs_{\psi}.$ If $\psi$ is non-singular, then $S_{\psi}=\pi_{\0}$, $\g_{\psi}=\g_{0}$, and $Z=\mathfrak{z}$. Let  $ \displaystyle\fr_{\psi}^{\pm}=
\bigoplus_{\alpha\in\Delta_{+}\setminus {\Delta^{S_{\psi}}}_{+}}\g_{\pm\alpha}$
and $\fp_{\psi}^{\pm}=\g_{\psi}\oplus\fr_{\psi}^{\pm}.$\\

Assume that  $S_{\psi}=\{\gamma_{1},\dots,\gamma_{s}\}$ and $\pi\setminus S_{\psi}=\{\beta_{1},\dots,\beta_{k}\}$.

\begin{lem} \label{lem:rideal}
$\fr_{\psi}^{+}$ (respectively $\fr_{\psi}^{-}$) is an ideal of $\fp_{\psi}^{+}$ (respectively $\fp_{\psi}^{-}$).
\end{lem}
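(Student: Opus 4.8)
The plan is to reduce the assertion to an elementary additivity property of the $\h$-weight decomposition of $\g$. Using the grading $\g=\h\oplus\bigoplus_{\alpha\in\Delta}\g_\alpha$ together with the relations $[\g_\alpha,\g_\beta]\subseteq\g_{\alpha+\beta}$ (with the conventions $\g_0=\h$ and $\g_\gamma=0$ for $\gamma\notin\Delta\cup\{0\}$) and $[\h,\g_\alpha]\subseteq\g_\alpha$, write $T=\Delta_+\setminus\Delta^{S_{\psi}}_+$, so that $\fr_\psi^+=\bigoplus_{\alpha\in T}\g_\alpha$, while $\g_\psi=\h\oplus\bigoplus_{\beta\in\Delta^{S_{\psi}}}\g_\beta$ and $\fp_\psi^+=\g_\psi+\fr_\psi^+$. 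Then the whole lemma follows from the single combinatorial claim: if $\alpha\in T$ and $\beta\in\Delta^{S_{\psi}}\cup T\cup\{0\}$ satisfy $\alpha+\beta\in\Delta\cup\{0\}$, then in fact $\alpha+\beta\in T$.

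To prove the claim, expand $\alpha=\sum_{\gamma\in\pi}c_\gamma\gamma$ with all $c_\gamma\geq0$. Since $\alpha\in\Delta_+$ but $\alpha\notin\Delta^{S_{\psi}}_+=Q^{S_{\psi}}\cap\Delta_+$, the support of $\alpha$ is not contained in $S_{\psi}$, so $c_{\gamma_0}>0$ for some $\gamma_0\in\pi\setminus S_{\psi}$. If $\beta\in\Delta^{S_{\psi}}\cup\{0\}$, then $\beta$ is supported on $S_{\psi}$, so the $\gamma_0$-coefficient of $\alpha+\beta$ is again $c_{\gamma_0}>0$; if $\beta\in T$, then $\beta$ has all coefficients $\geq0$, so the $\gamma_0$-coefficient of $\alpha+\beta$ is $\geq c_{\gamma_0}>0$. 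In either case $\alpha+\beta$ has a strictly positive coefficient; being an element of $\Delta\cup\{0\}$, whose coefficients are all of one sign, it must therefore lie in $\Delta_+$ (so in particular $\alpha+\beta\neq0$), and since its $\gamma_0$-coefficient is positive with $\gamma_0\notin S_{\psi}$ it does not lie in $Q^{S_{\psi}}$, hence not in $\Delta^{S_{\psi}}_+$. Thus $\alpha+\beta\in T$, which proves the claim. Specializing $\beta\in\Delta^{S_{\psi}}\cup\{0\}$ yields $[\g_\psi,\fr_\psi^+]\subseteq\fr_\psi^+$, and specializing $\beta\in T$ yields $[\fr_\psi^+,\fr_\psi^+]\subseteq\fr_\psi^+$; the latter shows $\fr_\psi^+$ is a subalgebra, and together with the former (and the fact that $\g_\psi$ is a subalgebra) it shows $\fp_\psi^+$ is one, so that $\fr_\psi^+\subseteq\fp_\psi^+$ is indeed an ideal. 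The case $\fr_\psi^-\subseteq\fp_\psi^-$ is the mirror image, obtained by replacing $\Delta_+$ by $\Delta_-$ and ``positive coefficient'' by ``negative coefficient'' throughout.

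The only genuinely delicate point — and thus the main obstacle, modest as it is — is the Lie superalgebra bookkeeping behind $[\g_\alpha,\g_\beta]\subseteq\g_{\alpha+\beta}$: root spaces $\g_\alpha$ need not be one-dimensional, and an odd root $\alpha$ may have $2\alpha\in\Delta_{\0}$, so one should confirm that the $\Delta$-graded bracket on $\g$ behaves exactly as in the reductive case. This is part of the standard structure theory of the basic classical Lie superalgebras of type I recalled earlier in this section; note in particular that the argument above never uses $\dim\g_\alpha=1$, only additivity of $\h$-weights under the bracket, and that the summand $\g_1\subseteq\fr_\psi^+$ is automatically harmless since $[\g_1,\g_1]=0$ by supercommutativity of $\g_1$.
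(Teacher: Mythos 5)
Your proof is correct and follows essentially the same route as the paper's: both arguments reduce the lemma to the combinatorial fact that if $\alpha\in\Delta_{+}\setminus\Delta^{S_{\psi}}_{+}$ and $\beta$ is a root of $\fp_{\psi}^{+}$ with $\alpha+\beta\in\Delta$, then $\alpha+\beta\in\Delta_{+}\setminus\Delta^{S_{\psi}}_{+}$, proved by tracking the coefficient of a simple root outside $S_{\psi}$ in the expansion of $\alpha+\beta$. The only cosmetic differences are your partition of the cases for $\beta$ (supported on $S_{\psi}$ versus lying in $\Delta_{+}\setminus\Delta^{S_{\psi}}_{+}$, rather than the paper's $\Delta_{+}$ versus $-\Delta^{S_{\psi}}_{+}$) and your explicit handling of the Cartan part and of the superalgebra bookkeeping, which the paper leaves implicit.
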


\begin{proof}
We only prove the lemma for $\fr_{\psi}^{+}$ since the proof for $\fr_{\psi}^{-}$ is similar. Let $\alpha\in \Delta_{+}\setminus \Delta^{S_{\psi}}_{+}$ and $\beta\in \Delta_{+}\cup(-\Delta^{S_{\psi}}_{+})$ with $\alpha+\beta\in\Delta.$ To prove that $\fr_{\psi}^{+}$ is an ideal of $\fp_{\psi}^{+}$, it suffices to show that $\alpha+\beta\in \Delta_{+}\setminus\Delta^{S_{\psi}}_{+}.$ Since $\alpha\in\Delta_{+}\setminus\Delta^{S_{\psi}}_{+}$ and $\pi=\{\gamma_{1},\ldots,\gamma_{s},\beta_{1},\ldots,\beta_{k}\},$ there exist $a_{i}, b_{j}\in\Z_{\geq0}$ such that $\displaystyle\alpha=\sum_{i=1}^{s}a_{i}\gamma_{i}+\sum_{j=1}^{k}b_{j}\beta_{j}$ with at least one $b_{j}>0$. If $\beta\in\Delta_{+},$ then $\displaystyle\beta=\sum_{i=1}^{s}c_{i}\gamma_{i}+\sum_{j=1}^{k}b^{\prime}_{j}\beta_{j}$ for some $c_{i}, b^{\prime}_{j}\in\Z_{\geq0}$. Therefore $\displaystyle\alpha+\beta=\sum_{i=1}^{s}(a_i+c_{i})\gamma_{i}+\sum_{j=1}^{k}(b_{j}+b^{\prime}_{j})\beta_{j}$.  Obviously $\alpha+\beta\in\Delta_{+}$. Moreover, $\displaystyle\alpha+\beta\notin\Delta^{S_{\psi}}_{+}$ since $b_{j}+b_{j}^{\prime}>0$ for at least one $j$. Now, suppose that $\beta\in(-\Delta^{S_{\psi}}_{+}).$ Then there exist $c_{i}^{\prime}\in\Z_{\geq0}$ not all zero so that $\displaystyle\beta=-\sum_{i=1}^{s}c_{i}^{\prime}\gamma_{i}$.  It follows that $\displaystyle\alpha+\beta=\sum_{i=1}^{s}(a_i-c_{i}^{\prime})\gamma_{i}+\sum_{j=1}^{k}b_{j}\beta_{j}.$ Since $\alpha+\beta\in \Delta$ and $b_{j}>0$ for at least one $j$, it must be $\alpha+\beta\in\Delta_{+}\setminus\Delta^{S_{\psi}}_{+}.$
\end{proof}

\bigskip

\subsection{Induced Whittaker modules} \label{section:construction of the induced modules} In this subsection we construct Whittaker $\g$-modules as modules induced from Whittaker $\g_0$-modules and study their properties.  

Let $\theta_{\psi}: Z(\g_{\psi})\to U(\h)$ be the Harish-Chandra homomorphism for $\g_{\psi}$ and let $\widetilde{\theta}_{\psi}: \h^{*}\to {\rm Max} Z(\g_{\psi})$ be the induced map on the maximal ideals. For any $\lambda\in\h^{*}$ let $\chi_{\lambda}$ be the character of $Z(\g_{\psi})$ corresponding to $\widetilde{\theta}_{\psi}(\lambda)\in {\rm Max}Z(\g_{\psi})$ (that is, an algebra homomorphism $\chi_{\lambda}: Z(\g_{\psi})\to\cc$  with  $\ker\chi_{\lambda}=\widetilde{\theta}_{\psi}(\lambda)$). 

\begin{prop}\cite[cf. Prop. 2.3]{mcdowell}\label{prop:unique simple Whittaker for g_psi}
There exists a unique (up to isomorphism) simple Whittaker $\g_{\psi}$-module $W_{\psi,\chi_{\lambda}}$ of type $\psi$ and central character $\chi_{\lambda}.$ Moreover, $W_{\psi,\chi_{\lambda}}$ contains a unique (up to scalar multiplication) Whittaker vector.
\end{prop}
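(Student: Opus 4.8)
The plan is to reduce the statement to Kostant's theory of Whittaker modules for semisimple Lie algebras, applied to the semisimple part $\fs_{\psi}$ of the reductive algebra $\g_{\psi}$, with the central torus $Z$ contributing only an overall scalar; this is the strategy behind \cite[Prop.~2.3]{mcdowell}. Write $\n_{\psi}^{+}=\n^{+}\cap\g_{\psi}=\bigoplus_{\alpha\in\Delta^{S_{\psi}}_{+}}\g_{\alpha}$; since each $\g_{\alpha}$ with $\alpha\neq 0$ lies in $[\g_{\psi},\g_{\psi}]$, this is the nilradical of a Borel subalgebra of $\fs_{\psi}$, with simple roots $S_{\psi}$. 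First I would record the easy structural facts: by the definition of $S_{\psi}$ the restriction $\psi|_{\n_{\psi}^{+}}$ is a \emph{non-singular} Whittaker character in Kostant's sense (nonzero on each simple root space $\g_{\gamma_{i}}$, $\gamma_{i}\in S_{\psi}$), and $\psi([\n_{\psi}^{+},\n_{\psi}^{+}])=0$ because $\n_{\psi}^{+}\subseteq\n_{0}^{+}$, so $\cc_{\psi}$ is a one-dimensional $\n_{\psi}^{+}$-module. Moreover $\g_{\psi}=\fs_{\psi}\oplus Z$ with $Z$ central gives $U(\g_{\psi})=U(\fs_{\psi})\otimes U(Z)$ and $Z(\g_{\psi})=Z(\fs_{\psi})\otimes U(Z)$; in particular $Z\subseteq Z(\g_{\psi})$, the algebra $Z(\g_{\psi})$ is a polynomial ring (so $\widetilde{\theta}_{\psi}(\lambda)$ genuinely determines a character $\chi_{\lambda}$), and giving $\chi_{\lambda}$ amounts to giving a central character $\chi_{\lambda}'$ of $Z(\fs_{\psi})$ together with a functional $\mu=\chi_{\lambda}|_{Z}\in Z^{*}$.

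For existence I would invoke Kostant's results \cite[\S3]{Ko1978}: the universal Whittaker module $Q:=U(\fs_{\psi})\otimes_{U(\n_{\psi}^{+})}\cc_{\psi}$ is free as a $Z(\fs_{\psi})$-module, its space of Whittaker vectors $\Wh_{\psi}(Q)$ is free of rank one over $Z(\fs_{\psi})$, and for each central character $\chi_{\lambda}'$ of $Z(\fs_{\psi})$ the quotient $Q_{\chi_{\lambda}'}:=Q/\ker(\chi_{\lambda}')Q$ is the unique simple Whittaker $\fs_{\psi}$-module of type $\psi$ and central character $\chi_{\lambda}'$, with $\dim\Wh_{\psi}(Q_{\chi_{\lambda}'})=1$. (Equivalently one may rerun Kostant's argument with $\fs_{\psi}$ replaced by $\g_{\psi}$ and $Z(\fs_{\psi})$ by $Z(\g_{\psi})$, since all of his computations take place over $U(\n_{\psi}^{+})$ and $[\g_{\psi},\g_{\psi}]=\fs_{\psi}$.) I would then \emph{define} $W_{\psi,\chi_{\lambda}}:=Q_{\chi_{\lambda}'}$ with $Z$ acting by the scalars $\mu$. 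Because $Z$ is central and acts by scalars, simplicity over $\fs_{\psi}$ forces simplicity over $\g_{\psi}$; the image of $1$ generates $W_{\psi,\chi_{\lambda}}$ and is a Whittaker vector, so it is a Whittaker module of type $\psi$; it has central character $\chi_{\lambda}$ by construction; and since $Z\cap\n_{\psi}^{+}=0$ its space of Whittaker vectors equals $\Wh_{\psi}(Q_{\chi_{\lambda}'})$, which is one-dimensional.

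For uniqueness, I would take any simple Whittaker $\g_{\psi}$-module $V$ of type $\psi$ with central character $\chi_{\lambda}$: then $Z$ acts on $V$ by $\mu$, so $V$ is an $\fs_{\psi}$-module, necessarily a Whittaker $\fs_{\psi}$-module of type $\psi$; central elements act on its cyclic Whittaker generator by $\chi_{\lambda}$ and hence $Z(\fs_{\psi})$ acts on all of $V$ by $\chi_{\lambda}'$; and $V$ is simple over $\fs_{\psi}$ because any $\fs_{\psi}$-submodule is automatically a $\g_{\psi}$-submodule. Kostant's uniqueness then gives $V\cong Q_{\chi_{\lambda}'}$ as $\fs_{\psi}$-modules, and since the $Z$-action is forced, $V\cong W_{\psi,\chi_{\lambda}}$ as $\g_{\psi}$-modules. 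I expect the only real subtlety to be the reductive-versus-semisimple bookkeeping: checking that Kostant's freeness and uniqueness statements, phrased for semisimple algebras and $Z(\fs_{\psi})$, transport correctly through the splittings $\g_{\psi}=\fs_{\psi}\oplus Z$ and $Z(\g_{\psi})=Z(\fs_{\psi})\otimes U(Z)$, and in particular that a central character of $Z(\g_{\psi})$ is exactly the pair $(\chi_{\lambda}',\mu)$. Everything else is routine, using only that $Z$ is central, acts by scalars, and meets $\n_{\psi}^{+}$ trivially.
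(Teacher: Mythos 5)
Your proposal is correct and follows essentially the same route as the paper: restrict $\psi$ to the nilradical $\n_{\psi}$ of $\fs_{\psi}$ (where it is non-singular), invoke Kostant's Theorem 3.6.1 to get the unique simple Whittaker $\fs_{\psi}$-module with the restricted central character, and extend to $\g_{\psi}$ by letting the central part act by the scalars determined by $\chi_{\lambda}$. The paper leaves the uniqueness and one-dimensionality verifications as "easy to verify"; your write-up just spells out that bookkeeping explicitly.
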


\begin{proof} 
Set $\displaystyle\n_{\psi}=\bigoplus_{\alpha\in\Delta^{S_{\psi}}_{+}}\g_{\alpha}$. Then $\psi|_{\n_{\psi}}$ is non-singular \cite[cf. \S2.3]{Ko1978}. Let $\chi_{\fs}$ and $\bar{\chi}_{\lambda}$ be the restrictions of $\chi_{\lambda}$ to $Z(\fs_{\psi})$ and $\mathfrak{z}$ respectively.  By \cite[Thm 3.6.1]{Ko1978}) there exists a unique (up to isomorphism) simple Whittaker $\fs_{\psi}$-module $W_{\psi,\chi_{\fs}}$  of type $\psi$ and central character $\chi_{\fs}$. Put $zv=\bar{\chi}_{\lambda}(z)v$ for all $z\in \mathfrak{z}$ and $v\in W_{\psi,\chi_{\fs}}.$ It is easy to verify that  the resulting $\g_{\psi}$-module satisfies our claims.
\end{proof}

Recall that $\fp_{\psi}^{+}=\g_{\psi}\oplus\fr_{\psi}^{+}$ and extend the $\g_{\psi}$-action on $W_{\psi,\chi_{\lambda}}$ to an action of $\fp_{\psi}^{+}$ by letting $\fr_{\psi}^{+}$ act by 0. (This action is well-defined by Lemma \ref{lem:rideal}.) Let 
\begin{equation}\label{defn:the induced modules}
\widetilde{M}_{\psi,\lambda}=U(\g)\otimes _{U(\fp_{\psi}^{+})}W_{\psi,\chi_{\lambda}}.
\end{equation} Further, if $w_{\psi,\lambda}$ is a cyclic Whittaker vector of $W_{\psi,\chi_{\lambda}}$ (unique up to scalar multiplication, see Proposition \ref{prop:unique simple Whittaker for g_psi}), we set $\tilde{w}_{\psi,\lambda}=1\otimes w_{\psi,\lambda}$. Then $\widetilde{M}_{\psi,\lambda}$ is a Whittaker module in the category $\W(\g,\n)$ of type $\psi.$

Consider $U(\fr_{\psi}^{-})$ with the adjoint action of $\g_{\psi}$. It is easy to see that the map  $f$ defined by $u\otimes y\to uy$ is a $\g_{\psi}$-isomorphism of $U(\fr_{\psi}^{-})\otimes_{\cc}W_{\psi,\chi_{\lambda}}$ onto $\widetilde{M}_{\psi,\lambda}.$ Therefore
\begin{equation}\label{eqn:basic property of induced for g}\widetilde{M}_{\psi,\lambda}\cong U(\fr_{\psi}^{-})\otimes_{\cc}W_{\psi,\chi_{\lambda}}\end{equation}  as $\g_{\psi}$-modules.
\bigskip

In what follows, we prove that $\widetilde{M}_{\psi,\lambda}$ has finite length as a $\g$-module. First, we establish some notation. We denote again by $\psi$ the restriction of $\psi$ to $\n_{0}^{+}.$  Recall that $P_{\0}=\Delta_{\0}^{+}\cup(-\Delta_{+}^{S_{\psi}})$. 
Let $\displaystyle\fm_{\psi}^{\pm}=\bigoplus_{\alpha\in \Delta_{\0}^{+}\setminus\Delta^{S_{\psi}}_{+}}\g_{\pm\alpha}.$ Then $\g_{0}=\fm_{\psi}^{-}\oplus\g_{\psi}\oplus\fm_{\psi}^{+}$, where $\g_{\psi}$ is defined as before. It follows by \cite[Chap. VIII, \S3, no. 4, Prop. 11 and Def. 2]{Bourbaki2}  that $\fp_{\psi}^{0}=\g_{\psi}\oplus\fm_{\psi}^{+}$ is a parabolic subalgebra of $\g_{0}.$ Moreover, $\fr_{\psi}^{\pm}=\fm_{\psi}^{\pm}\oplus\g_{\pm1}$. Let $\lambda\in\h^{*}$ and let $W_{\psi,\chi_{\lambda}}$ be as before.  We extend the $\g_{\psi}$-action on $W_{\psi,\chi_{\lambda}}$ to an action of $\fp_{\psi}^{0}$ by letting $\fm_{\psi}^{+}$ act by 0 and we set \begin{equation}\label{eqn:induced for g_0}M_{\psi,\lambda}=U(\g_{0})\otimes _{U(\fm_{\psi}^{+})}W_{\psi,\chi_{\lambda}}.\end{equation} 
These modules were constructed and studied in  \cite{mcdowell} and \cite{milicic-soergel}. Although this construction was originally accomplished for semisimple Lie algebras, it can be extended to reductive Lie algebras and the results that  we use remain valid over $\g_{0}$.  It is obvious that if $\psi$ is non-singular, then $\g_{\psi}=\g_{0}$, $\fm_{\psi}^{\pm}=0$ and $M_{\psi,\lambda}=W_{\psi,\chi_{\lambda}}$. It follows by  \cite[Prop. 2.4 (d)]{mcdowell} that \begin{equation}\label{eqn: basic property of induced for g_0}M_{\psi,\lambda}\cong U(\fm_{\psi}^{-})\otimes_{\cc}W_{\psi,\chi_{\lambda}}\end{equation} as $\g_{\psi}$-modules.

By  (\ref{eqn:basic property of induced for g}), the fact that $\fr_{\psi}^{-}=\g_{-1}\oplus\fm_{\psi}^{-}$, the  PBW Theorem, and (\ref{eqn: basic property of induced for g_0}), it follows that \begin{equation}\label{eqn:isomorphism as g_0 modules between the two constructions}\widetilde{M}_{\psi,\lambda}\cong \bigwedge\g_{-1}\otimes_{\cc}M_{\psi,\lambda}.\end{equation} It is easy to see that this is an isomorphism of $\g_{0}$-modules, where we consider $\bigwedge\g_{-1}$ as a $\g_{0}$-module with the adjoint action.
Let $P\subset\h^*$ be the set of weights of $\bigwedge\fg_{-1}$ as a $\g_0$-module (counted with multiplicities).

\begin{prop}\label{prop: filtration of standard Whittaker as g_0-module}
As a $\g_{0}$-module $\widetilde{M}_{\psi,\lambda}$  has a filtration with subquotients $M_{\psi,\lambda+\nu}, \nu\in P$.
\end{prop}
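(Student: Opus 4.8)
The plan is to exploit the $\g_0$-module isomorphism $\widetilde M_{\psi,\lambda}\cong \bigwedge\g_{-1}\otimes_\cc M_{\psi,\lambda}$ from (\ref{eqn:isomorphism as g_0 modules between the two constructions}) and to produce the desired filtration from a filtration of the tensor factor $\bigwedge\g_{-1}$. Since $\bigwedge\g_{-1}$ is a finite-dimensional $\g_0$-module, it has a filtration $0=F_0\subseteq F_1\subseteq\cdots\subseteq F_N=\bigwedge\g_{-1}$ by $\g_0$-submodules whose successive quotients $F_i/F_{i-1}$ are one-dimensional; one should choose this filtration compatibly with the $\h$-weight space decomposition, so that each $F_i/F_{i-1}$ is the one-dimensional $\h$-module $\cc_{\nu_i}$ for some weight $\nu_i\in P$, and the multiset $\{\nu_1,\dots,\nu_N\}$ is exactly $P$ (weights counted with multiplicity). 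Such a filtration exists because $\n_0^+$ acts nilpotently on the finite-dimensional space $\bigwedge\g_{-1}$, so one can refine the weight decomposition to a full flag of $\fb_0=\h\oplus\n_0^+$-submodules; each quotient is then a one-dimensional $\fb_0$-module on which $\h$ acts by some $\nu_i\in P$ and $\n_0^+$ acts by $0$.

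Next I would tensor this filtration with $M_{\psi,\lambda}$ over $\cc$. Because tensoring over $\cc$ with a fixed vector space is exact, we get a filtration
\[
0=F_0\otimes M_{\psi,\lambda}\subseteq F_1\otimes M_{\psi,\lambda}\subseteq\cdots\subseteq F_N\otimes M_{\psi,\lambda}=\bigwedge\g_{-1}\otimes M_{\psi,\lambda}\cong\widetilde M_{\psi,\lambda}
\]
of $\g_0$-modules (each $F_i\otimes M_{\psi,\lambda}$ is a $\g_0$-submodule since $F_i$ is), with successive quotients $(F_i/F_{i-1})\otimes M_{\psi,\lambda}\cong \cc_{\nu_i}\otimes M_{\psi,\lambda}$. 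The remaining point is to identify $\cc_{\nu}\otimes M_{\psi,\lambda}$ with $M_{\psi,\lambda+\nu}$ as $\g_0$-modules. For this, recall $M_{\psi,\lambda}=U(\g_0)\otimes_{U(\fm_\psi^+)}W_{\psi,\chi_\lambda}$, with $\fm_\psi^+$ acting by $0$ on the cyclic Whittaker vector and $Z(\g_\psi)$ acting by $\chi_\lambda$; and $\cc_\nu$ is the one-dimensional $\g_0$-module (trivial on $[\g_0,\g_0]$, and on $\h$ acting by $\nu$ restricted appropriately), so that $\cc_\nu\otimes M_{\psi,\lambda}$ has a cyclic Whittaker vector on which $\fm_\psi^+$ still acts by $0$, $\n_\psi$ acts by $\psi$, and $Z(\g_\psi)$ acts by the twisted central character. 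One checks via the Harish-Chandra homomorphism that this twisted central character is precisely $\chi_{\lambda+\nu}$, and that $\cc_\nu\otimes M_{\psi,\lambda}$ is generated by this vector and satisfies the defining universal property of $M_{\psi,\lambda+\nu}$; hence $\cc_\nu\otimes M_{\psi,\lambda}\cong M_{\psi,\lambda+\nu}$.

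The main obstacle I anticipate is the bookkeeping in the last step: verifying that twisting $M_{\psi,\lambda}$ by the one-dimensional $\g_0$-character $\cc_\nu$ shifts the $\g_\psi$-central character from $\chi_\lambda$ to exactly $\chi_{\lambda+\nu}$ (and not to some Weyl-translate or other shift), and that the parabolic induction data ($\fm_\psi^+$ acting by $0$, the Whittaker condition on $\n_\psi$) is genuinely preserved under the twist. This is essentially a compatibility check between the Harish-Chandra homomorphism $\theta_\psi$ for $\g_\psi$ and the character twist; it is routine but requires care because $\nu$ need not be $\g_\psi$-integral or dominant, so one must argue at the level of the map $\widetilde\theta_\psi:\h^*\to\mathrm{Max}\,Z(\g_\psi)$ rather than with highest-weight combinatorics. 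Everything else — existence of the flag of $\bigwedge\g_{-1}$, exactness of $-\otimes_\cc M_{\psi,\lambda}$, and the identification of subquotients — is formal.
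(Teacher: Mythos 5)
Your first step --- invoking the $\g_0$-module isomorphism $\widetilde{M}_{\psi,\lambda}\cong\bigwedge\g_{-1}\otimes_{\cc}M_{\psi,\lambda}$ of (\ref{eqn:isomorphism as g_0 modules between the two constructions}) --- is exactly the paper's starting point; the paper then simply cites \cite[Lemma 5.12]{milicic-soergel} for the rest. Your attempt to replace that citation has a genuine gap: the flag $0=F_0\subseteq\cdots\subseteq F_N=\bigwedge\g_{-1}$ with one-dimensional quotients that you construct is a flag of $\fb_0=\h\oplus\n_0^+$-submodules, \emph{not} of $\g_0$-submodules. Indeed no such $\g_0$-flag can exist: $\g_{-1}$ is an irreducible $\g_0$-submodule of $\bigwedge\g_{-1}$ of dimension $>1$. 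Consequently the spaces $F_i\otimes M_{\psi,\lambda}$ are only $\fb_0$-stable, and the filtration you produce is not a filtration of $\widetilde{M}_{\psi,\lambda}$ by $\g_0$-submodules, which is what the proposition asserts. The same problem infects the identification of subquotients: for $\nu\in P$ the line $\cc_\nu$ is not a one-dimensional $\g_0$-module (that would force $\nu$ to vanish on $\h\cap[\g_0,\g_0]$, which the weights of $\bigwedge\g_{-1}$ do not), so ``twisting by $\cc_\nu$'' does not make sense over $\g_0$, and restricting $\nu$ to the center would shift the central character by $\bar\nu$ rather than produce $M_{\psi,\lambda+\nu}$.

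The standard repair is to move the finite-dimensional factor \emph{inside} the induction before filtering: by the tensor identity, $\bigwedge\g_{-1}\otimes M_{\psi,\lambda}\cong U(\g_0)\otimes_{U(\fp_\psi^0)}\bigl(\bigwedge\g_{-1}\otimes W_{\psi,\chi_\lambda}\bigr)$, and one then filters $\bigwedge\g_{-1}\otimes W_{\psi,\chi_\lambda}$ as a $\fp_\psi^0$-module using Kostant's theorem that a finite-dimensional module tensored with a simple Whittaker module has a filtration by Whittaker modules with the shifted central characters $\chi_{\lambda+\nu}$; exactness of $U(\g_0)\otimes_{U(\fp_\psi^0)}(-)$ (freeness of $U(\g_0)$ over $U(\fp_\psi^0)$) then yields the $M_{\psi,\lambda+\nu}$-filtration. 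This is precisely the content of the lemma of Mili\v{c}i\'c--Soergel that the paper cites, and it is where the real work lives; your reduction to a one-dimensional flag bypasses it in a way that cannot be made $\g_0$-equivariant.
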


\begin{proof}   The proof follows from  (\ref{eqn:isomorphism as g_0 modules between the two constructions}) and \cite[Lemma 5.12]{milicic-soergel}. \end{proof}

\begin{cor} As a $\g_{0}$-module $\widetilde{M}_{\psi,\lambda}$ has finite-length.
\end{cor}

\begin{proof} The proof follows from Proposition \ref{prop: filtration of standard Whittaker as g_0-module} and the fact that each $M_{\psi,\mu},\mu\in\h^{*}$ has finite length by \cite[Theorem 2.6]{milicic-soergel}.
\end{proof}



\subsubsection{} In this subsection, let $\psi\in\lm,$ $\psi\neq0.$  We will further assume that $\psi$ is singular if $\g=\fpsl(n,n)$. Under these assumptions $Z\neq0$.

Recall that $S_{\psi}=\{\gamma_{1},\dots,\gamma_{s}\}$ and $\pi\setminus S_{\psi}=\{\beta_{1},\dots,\beta_{k}\}$.

\begin{lem}\label{easy lemma-Section 4}
$\beta_{j}(h)=0$ for all $h\in\h_{\psi}$, $j=1,\dots,k$.
\end{lem}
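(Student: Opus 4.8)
The claim is that each $\beta_j \in \pi \setminus S_\psi$ vanishes on $\h_\psi = \h \cap \fs_\psi$. The plan is to exploit the structure of $\fs_\psi = [\g_\psi, \g_\psi]$, the semisimple part of the reductive Lie algebra $\g_\psi = \h \oplus \bigoplus_{\alpha \in \Delta^{S_\psi}} \g_\alpha$. First I would recall that since $\fs_\psi$ is the derived subalgebra of $\g_\psi$, its Cartan subalgebra $\h_\psi$ is spanned by the coroots $h_{\gamma_i}$ (equivalently, by $[\g_{\gamma_i}, \g_{-\gamma_i}]$) for $\gamma_i \in S_\psi$; indeed $\h_\psi = \sum_{i=1}^s \cc h_{\gamma_i}$, because $S_\psi$ is a base for the root system $\Delta^{S_\psi}$ of $\fs_\psi$. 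So it suffices to show $\beta_j(h_{\gamma_i}) = 0$ for all $i = 1, \dots, s$ and all $j = 1, \dots, k$.

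The key point is then a statement about the Cartan integers of the simple root system $\pi$ of $\g_0$: for distinct simple roots, $\beta_j(h_{\gamma_i}) = \langle \beta_j, \gamma_i^\vee \rangle$ is a non-positive integer, and it is zero precisely when $\beta_j$ and $\gamma_i$ are not connected in the Dynkin diagram. So naively the lemma would need the $\beta_j$'s to be disconnected from the $\gamma_i$'s, which need not hold. The resolution must come from the hypothesis $Z \neq 0$ together with the specific list of type I superalgebras. The right approach, I think, is to use $\h = \h_\psi \oplus Z$ and the description $Z = \{h \in \h \mid \alpha(h) = 0 \text{ for all } \alpha \in \Delta^{S_\psi}\} + \mathfrak{z}$: one checks directly, case by case for $\fsl(m,n)$, $\fpsl(n,n)$, and $\fosp(2,2n)$, that the simple roots $\beta_j \in \pi \setminus S_\psi$ — which by definition of $\lm$ are exactly the roots on which $\psi$ vanishes, and which for these algebras include the distinguished odd simple root — annihilate $\h_\psi$. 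Concretely, for $\g_0 = \fsl(m) \oplus \fsl(n) \oplus \cc$ (in the $\fsl(m,n)$ case), the root system $\Delta_{\overline 0}$ splits into two irreducible pieces plus a central direction, $S_\psi$ lies in $\pi_{\overline 0}$, and the combinatorics of which simple roots can be connected forces $\beta_j(h_{\gamma_i}) = 0$.

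I expect the main obstacle to be pinning down exactly why the hypothesis is needed and finding the cleanest uniform argument rather than a brute-force case check. The cleanest route is likely: (1) reduce to showing $\beta_j$ kills $\h_\psi = \sum \cc h_{\gamma_i}$; (2) observe $\beta_j(h_{\gamma_i})$ is determined by the Dynkin diagram of $\pi_{\overline 0}$; (3) note that for type I, $\pi \setminus \pi_{\overline 0}$ is a single odd root, the even simple roots $\pi_{\overline 0}$ decompose $\Delta_{\overline 0}$ into components corresponding to $\fsl$-factors, and any $\beta_j \in \pi_{\overline 0} \setminus S_\psi$ that is connected to some $\gamma_i$ would, upon iterating, force $S_\psi$ to exhaust a whole component — which would contradict $Z \neq 0$ unless that component is handled by the central part. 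Actually the honest statement is probably just: $\h_\psi \subseteq [\g_\psi,\g_\psi] \cap \h$, and since $\beta_j \notin \Delta^{S_\psi}$ while $\h_\psi$ is spanned by coroots of elements of $\Delta^{S_\psi}$, one uses that $\beta_j$ restricted to the span of those coroots is the restriction of a weight of the $\fs_\psi$-module $\g_{\beta_j}$; if this restriction were nonzero, $\g_{\beta_j}$ together with $\fs_\psi$ would generate a larger semisimple subalgebra whose root system properly contains $\Delta^{S_\psi}$ but still lies in $\Delta_{\overline 0}$, and checking the type I tables shows $\beta_j$ must then be orthogonal to all of $S_\psi$. I would carry out the orthogonality/connectedness bookkeeping explicitly in each of the three families, which is routine but is where the real content lies.
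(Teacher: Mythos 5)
You have correctly reduced the claim to showing that each $\beta_j\in\pi\setminus S_\psi$ vanishes on the span of the coroots $h_{\gamma_i}$, $\gamma_i\in S_\psi$, and you have correctly observed that $\beta_j(h_{\gamma_i})$ is a Cartan integer which vanishes exactly when $\beta_j$ and $\gamma_i$ are not adjacent in the Dynkin diagram of $\pi$ --- ``which need not hold.'' That observation is exactly where the proof stops being a proof: everything after it is speculation that the standing hypotheses ($Z\neq0$, $\g$ of type I) must force the required orthogonality, and you explicitly defer the verification to a case-by-case check that you do not carry out. Since you yourself identify that check as ``where the real content lies,'' the proposal has a genuine gap at its central step.

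Moreover, the deferred check fails. Take $\g=\fsl(1,2)$ with $\psi$ non-singular (a case allowed in this subsection, since singularity is only imposed for $\fpsl(n,n)$, and the very case treated in Section 5). In the notation of Section 5, $S_\psi$ consists of the single even simple root (the root of $x_1$), so $\fs_\psi=\mathrm{span}\{x_1,y_1,h\}$ and $\h_\psi=\cc h$ with $h=\mathrm{diag}(0,1,-1)$; the unique $\beta_1$ is the odd simple root, the root of $x_2=E_{12}$, and $[h,x_2]=-x_2$, so $\beta_1(h)=-1\neq0$. The same failure occurs whenever some $\beta_j$ is adjacent to some $\gamma_i$ (e.g.\ $\fsl(1,3)$ with $S_\psi$ a single even simple root). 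So the statement as printed is not provable; the paper itself offers no argument beyond the assertion that it is ``an easy computation'' with the invariant form, and the computation in fact goes the other way. The lemma is used only in the proof of Lemma \ref{lem:basis of Z^*}(a), whose conclusion must therefore be justified differently (for $\fsl(m,n)$ and $\fosp(2,2n)$ one can note that $\dim Z=k$ and that $Z$ meets the common kernel in $\h$ of all the simple roots trivially). Your instinct that the naive argument breaks down was right; the correct next step was a counterexample, not a deferral.
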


\begin{proof} This is an easy computation using the non-degenerate even invariant supersymmetric normalized form $(\cdot,\cdot)$ on $\g$ and we omit it.\end{proof}
\bigskip

\indent For any $\lambda\in\h^{*}$, let $\bar\lambda=\lambda|_{Z}$. Note that $\bar{\gamma_{i}}=0$ for each $i$ because $\gamma_{i}(\zeta)=0$ for all
$\zeta\in Z.$

\begin{lem} \label{lem:basis of Z^*}
\begin{itemize}
\item[\rm(a)]The set $\{\bar{\beta}_{1},\dots,\bar{\beta}_{k}\}$ is a linearly independent subset of the dual space $Z^{*}$ of $Z$.
\item[\rm(b)]If $\gamma\in \Delta_{+}\setminus\Delta^{S_{\psi}}_{+}$, then $\displaystyle{\bar\gamma\in\bigoplus_{i=1}^{k}\left(\Z_{\geq0}\bar\beta_{i}\right)}$.
\end{itemize}
\end{lem}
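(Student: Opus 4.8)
The plan is to prove both parts by exploiting the decomposition $\h = \h_\psi \oplus Z$ and the fact (Lemma \ref{easy lemma-Section 4}) that each $\beta_j$ vanishes on $\h_\psi$, so that $\bar\beta_j = \beta_j|_Z$ really records all of $\beta_j$, while $\bar\gamma_i = 0$ for the $\gamma_i \in S_\psi$. The key structural input is that $\pi = \{\gamma_1,\dots,\gamma_s,\beta_1,\dots,\beta_k\}$ is a basis of the root lattice of $\g$ (restricted to $\h^*$, or rather its span), so the $\beta_j$ are linearly independent elements of $\h^*$ that moreover pair trivially with $\h_\psi$.

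For part (a), I would argue as follows. Suppose $\sum_{j=1}^k c_j \bar\beta_j = 0$ in $Z^*$, i.e.\ $\sum_j c_j \beta_j$ vanishes on $Z$. By Lemma \ref{easy lemma-Section 4} each $\beta_j$ already vanishes on $\h_\psi$, so $\sum_j c_j \beta_j$ vanishes on all of $\h = \h_\psi \oplus Z$; hence $\sum_j c_j \beta_j = 0$ as an element of $\h^*$. Since $\pi$ is a linearly independent subset of $\h^*$ (the simple roots of $\g$, viewed via the nondegenerate form, are linearly independent — here one uses that $\g$ is one of the type I superalgebras under consideration, where $\h^*$ has dimension equal to $|\pi|$, with the understanding in the $\fpsl(n,n)$ case that one passes to the appropriate quotient/subspace, which is why the singularity hypothesis on $\psi$ was imposed to guarantee $Z \neq 0$), the $\beta_j$ themselves are linearly independent, forcing all $c_j = 0$. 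This gives (a).

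For part (b), let $\gamma \in \Delta_+ \setminus \Delta^{S_\psi}_+$. Write $\gamma = \sum_{i=1}^s a_i \gamma_i + \sum_{j=1}^k b_j \beta_j$ with $a_i, b_j \in \Z_{\geq 0}$ and, since $\gamma \notin \Delta^{S_\psi}_+ = Q^{S_\psi}_+ \cap \Delta_+$, at least one $b_j > 0$ (this is automatic from $\gamma \notin Q^{S_\psi}$). Restricting to $Z$ and using $\bar\gamma_i = \gamma_i|_Z = 0$ for all $i$, we get $\bar\gamma = \sum_{j=1}^k b_j \bar\beta_j$ with $b_j \in \Z_{\geq 0}$, which is exactly the assertion that $\bar\gamma \in \bigoplus_{i=1}^k \Z_{\geq 0}\bar\beta_i$. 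I do not expect any genuine obstacle here; the only point requiring care is making precise in what ambient space "linear independence of $\pi$" holds — for $\fsl(m,n)$ and $\fosp(2,2n)$ this is standard, and for $\fpsl(n,n)$ the hypothesis that $\psi$ is singular (so $S_\psi \subsetneq \pi_{\bar 0}$ and $Z \neq 0$) is what makes the restriction to $Z$ nontrivial and the argument go through. One should also confirm that the decomposition $\gamma = \sum a_i\gamma_i + \sum b_j\beta_j$ is the honest expansion in simple roots, which holds because $\gamma \in \Delta_+$ and $\pi$ is a base; linear independence of $\pi$ then guarantees the coefficients are well-defined, so restricting a genuine identity in $\h^*$ to the subspace $Z$ is legitimate.
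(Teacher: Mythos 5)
Your proof is correct and follows essentially the same route as the paper's: for (a) you use Lemma \ref{easy lemma-Section 4} together with $\h=\h_{\psi}\oplus Z$ to upgrade vanishing on $Z$ to vanishing on all of $\h$, then invoke linear independence of the simple roots; for (b) you expand $\gamma$ in simple roots and restrict to $Z$, using $\bar{\gamma}_i=0$. Your added remark about where linear independence of $\pi$ lives (and the role of the singularity hypothesis for $\fpsl(n,n)$) is a reasonable clarification but does not change the argument.
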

\begin{proof} (a) Suppose that there exist $\kappa_{1},\dots,\kappa_{m}\in\cc$ such that
\begin{equation}\label{lem:basis of Z^*,eq1}
\sum_{i=1}^{m}\kappa_{i}\bar{\beta}_{i}=0 .
\end{equation}We wish to show that $\kappa_{i}=0$ for all $i$. We claim that $\displaystyle\sum_{i=1}^{m}\kappa_{i}\beta_{i}=0.$ By
(\ref{lem:basis of Z^*,eq1})  it follows that $\displaystyle\sum_{i=1}^{m}\kappa_{i}\beta_{i}(\zeta)=\sum_{i=1}^{m}\kappa_{i}\bar{\beta}_{i}(\zeta)=0$
for all $\zeta\in Z$. Since $\h=\h_{\psi}\oplus Z$, we only need to show that $\displaystyle\sum_{i=1}^{m}\kappa_{i}\beta_{i}=0$ on $\h_{\psi}$.
However this follows from Lemma \ref{easy lemma-Section 4}. Hence $\displaystyle\sum_{i=1}^{m}\kappa_{i}\beta_{i}=0$ on $\h$ and consequently
$\kappa_{i}=0$ for each $i$ by the linear independence of $\beta_{1},\dots,\beta_{k}$.

\par (b) Since $\gamma\in \Delta_{+}\setminus\Delta^{S_{\psi}}_{+}$, there exist unique
$\nu_{i},\kappa_{j}\in\Z_{\geq0}$, $i=1,\dots,s$, $j=1,\dots,k$ with $\kappa_{j}>0$ for at least one $j$ such that
$\gamma=\nu_{1}\gamma_{1}+\dots+\nu_{s}\gamma_{s}+\kappa_{1}\beta_{1}+\dots+\kappa_{k}\beta_{k}.$ Since $\bar{\gamma}_{i}=0$ for all $i$, we
must have $\bar{\gamma}=\kappa_{1}\bar{\beta}_{1}+\dots+\kappa_{k}\bar{\beta}_{k},$ which proves (ii). 

\end{proof}

\bigskip

\indent We define a partial order on $Z^{*}$ as follows: if $\alpha,\beta\in Z^{*}$, then $\alpha\leq\beta$ if and only if
 $\displaystyle\beta-\alpha\in\bigoplus_{i=1}^{k}\Z_{\geq0}\bar\beta_{i}$.

\par For any $\g_{\psi}$-module $V$ and any $\lambda\in Z^{*}$, set
\begin{equation*}
V^{\lambda}=\{\upsilon\in{V}\mid \zeta\upsilon=\lambda(\zeta)\upsilon\textrm{ for all }\zeta\in Z\}.
\end{equation*}

\bigskip


\begin{lem}\label{lem:Z acts semisimply}
Let $\g_{\psi}$ act on $U(\fr_{\psi}^{-})$ by the adjoint action. Then 
\begin{itemize}
\item [(a)] $Z$ acts semisimply on $U(\fr_{\psi}^{-})$ and $$\displaystyle U(\fr_{\psi}^{-})=\bigoplus_{\lambda\in Z^{*},\;\lambda\geq0}U(\fr_{\psi}^{-})^{-\lambda}, \quad \fr_{\psi}^{-}U(\fr_{\psi}^{-})=\bigoplus_{\lambda\in Z^{*},\;\lambda>0}U(\fr_{\psi}^{-})^{-\lambda}$$ and $U(\fr_{\psi}^{-})^{0}=\cc 1.$
\item [(b)] $U(\fr_{\psi})^{-\lambda}$ is a finite-dimensional $\g_{\psi}$-submodule of $U(\fr_{\psi}^{-})$ for all $\lambda\geq0$ in  $Z^{*}$.
 \end{itemize}
\end{lem}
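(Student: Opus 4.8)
The plan is to read off the adjoint action of $Z$ on the root vectors spanning $\fr_{\psi}^{-}$ and propagate it to a PBW basis of $U(\fr_{\psi}^{-})$, invoking the positivity statements of Lemma~\ref{lem:basis of Z^*} at the crucial points. Since $Z\subseteq\h$, each root space $\g_{-\alpha}$ with $\alpha\in\Delta_{+}\setminus\Delta^{S_{\psi}}_{+}$ is a $Z$-weight space of weight $-\bar\alpha$ for the adjoint action, so $\fr_{\psi}^{-}$ is a direct sum of $Z$-weight spaces and $Z$ acts semisimply on it. Because $\ad z$ ($z\in Z$) is a derivation of $U(\g)$ and $\fr_{\psi}^{-}$ generates $U(\fr_{\psi}^{-})$, the algebra $U(\fr_{\psi}^{-})$ is spanned by PBW monomials $y_{1}\cdots y_{t}$ in root vectors $y_{i}\in\g_{-\alpha_{i}}$, each of which is a $Z$-weight vector of weight $-\sum_{i=1}^{t}\bar\alpha_{i}$. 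Hence $Z$ acts semisimply on $U(\fr_{\psi}^{-})$, and by Lemma~\ref{lem:basis of Z^*}(b) each $\bar\alpha_{i}$ lies in $\bigoplus_{i}\Z_{\geq 0}\bar\beta_{i}$, so every $Z$-weight occurring has the form $-\lambda$ with $\lambda\geq 0$; this is the first displayed decomposition in (a).

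To identify $U(\fr_{\psi}^{-})^{0}$, I would note that for $\alpha\in\Delta_{+}\setminus\Delta^{S_{\psi}}_{+}$, writing $\alpha=\sum_{i}\nu_{i}\gamma_{i}+\sum_{j}\kappa_{j}\beta_{j}$ with some $\kappa_{j}>0$ and using $\bar\gamma_{i}=0$ together with the linear independence of $\bar\beta_{1},\dots,\bar\beta_{k}$ from Lemma~\ref{lem:basis of Z^*}(a), one obtains $\bar\alpha\neq0$, i.e.\ $\bar\alpha>0$. Thus the weight $-\sum_{i=1}^{t}\bar\alpha_{i}$ of a PBW monomial equals $0$ only when $t=0$, so $U(\fr_{\psi}^{-})^{0}=\cc1$. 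Since every positive-degree PBW monomial lies in $\fr_{\psi}^{-}U(\fr_{\psi}^{-})$ and $U(\fr_{\psi}^{-})=\cc1\oplus\fr_{\psi}^{-}U(\fr_{\psi}^{-})$, the graded complement of $\cc1=U(\fr_{\psi}^{-})^{0}$ is $\fr_{\psi}^{-}U(\fr_{\psi}^{-})=\bigoplus_{\lambda>0}U(\fr_{\psi}^{-})^{-\lambda}$, which finishes (a).

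For (b), I would fix $\lambda\geq 0$ and write $\lambda=\sum_{j}n_{j}\bar\beta_{j}$ (uniquely, by Lemma~\ref{lem:basis of Z^*}(a)). Since each $\bar\alpha_{i}$ has $\bar\beta$-coefficient sum at least $1$, any PBW monomial of weight $-\lambda$ has at most $\sum_{j}n_{j}$ factors; as $\Delta_{+}\setminus\Delta^{S_{\psi}}_{+}$ is finite and the root spaces are finite-dimensional, there are only finitely many PBW monomials of bounded degree, so $U(\fr_{\psi}^{-})^{-\lambda}$ is finite-dimensional. For $\g_{\psi}$-stability: $\fr_{\psi}^{-}$ is $\ad\g_{\psi}$-stable because it is an ideal of $\fp_{\psi}^{-}=\g_{\psi}\oplus\fr_{\psi}^{-}$ by Lemma~\ref{lem:rideal}, hence $\ad\g_{\psi}$ preserves $U(\fr_{\psi}^{-})$; and since $Z$ is central in $\g_{\psi}$, each $\ad x$ ($x\in\g_{\psi}$) commutes with all $\ad z$ ($z\in Z$) on $U(\fr_{\psi}^{-})$ and so preserves every $Z$-weight space. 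Therefore each $U(\fr_{\psi}^{-})^{-\lambda}$ is a finite-dimensional $\g_{\psi}$-submodule.

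The bookkeeping with PBW monomials and additivity of $Z$-weights is routine; the one step doing real work is the strict positivity $\bar\alpha>0$ for $\alpha\in\Delta_{+}\setminus\Delta^{S_{\psi}}_{+}$ — which is precisely where the hypothesis $Z\neq0$ and Lemma~\ref{lem:basis of Z^*} enter — since it is responsible both for $U(\fr_{\psi}^{-})^{0}=\cc1$ and for the finite-dimensionality of the weight spaces in (b).
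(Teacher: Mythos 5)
Your proof is correct and follows essentially the same route as the paper: expand in a PBW basis of $U(\fr_{\psi}^{-})$, observe that each monomial is a $Z$-weight vector whose weight is minus a sum of restrictions $\bar\alpha$ with $\bar\alpha>0$ (via Lemma \ref{lem:basis of Z^*}), and bound the number of factors in a monomial of fixed weight to get finite-dimensionality. Your explicit verification that the weight spaces are $\g_{\psi}$-stable (via Lemma \ref{lem:rideal} and the centrality of $Z$ in $\g_{\psi}$) is a detail the paper leaves implicit, and is a welcome addition.
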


\begin{proof} (a) Recall that $\displaystyle\fr_{\psi}^{-}=\bigoplus_{\beta\in \Delta_{+}\setminus\Delta^{S_{\psi}}_{+}}\g_{-\beta}$. Assume that $(\Delta_{+}\setminus\Delta^{S_{\psi}}_{+})_{\0}=\{\xi_{1},\ldots,\xi_{p}\}$ and $(\Delta_{+}\setminus\Delta^{S_{\psi}}_{+})_{\1}=\{\eta_{1},\ldots,\eta_{q}\}$. By the PBW Theorem the set $
\{x_{1}^{s_{1}}\ldots x_{q}^{s_{q}}y_{1}^{r_{1}}\dots y_{p}^{r_{p}}\mid r_{i}\in\Z_{\geq0}, s_{j}\in\{0,1\}\}$ is a basis of $U(\fr_{\psi}^{-})$, where $y_{i}\in\g_{-\xi_{i}}$, $x_{j}\in\g_{-\eta_{j}}$ for each $i,j$. 
It easy to see that $x_{1}^{s_{1}}\ldots x_{q}^{s_{q}}y_{1}^{r_{1}}\dots y_{p}^{r_{p}}\in U(\fr_{\psi}^{-})^{-\lambda}$ for
$\displaystyle\lambda=\sum_{i=1}^{p}r_{i}\bar{\xi}_{i}+\sum_{j=1}^{q}s_{j}\bar{\eta_{j}}$. Moreover,  $\lambda\geq0$ since  by Lemma \ref{lem:basis of Z^*}(b) $\bar{\xi}_{i}>0$ and $\bar{\eta_{j}}>0$  for all $i,j$ in the partial order on $Z^{*}$ defined above and
$r_{i}, s_{j}\in\Z_{\geq0}$ for all $i,j$. Clearly $ U(\fr_{\psi}^{-})^{0}$ consists of the scalar multiples of $1$ in $ U(\fr_{\psi}^{-})$.

\par(b)  If $\lambda\geq0$ is in $Z^{*}$, then  by \ref{lem:basis of Z^*}(b) there exist unique
$\kappa_{i}\in\Z_{\geq0}$ such that $\displaystyle\lambda=\sum_{i=1}^{k}\kappa_{i}\bar{\beta}_{i}$. Set
$\displaystyle|\lambda|=\sum_{i=1}^{k}\kappa_{i}$. Then $|\lambda|\in\Z_{\geq0}$ and
$|\lambda_{1}+\lambda_{2}|=|\lambda_{1}|+|\lambda_{2}|$ if $\lambda_{1},\lambda_{2}\geq0$. 
As in (a), write $(\Delta_{+}\setminus\Delta^{S_{\psi}})_{\0}=\{\xi_{1},\ldots,\xi_{p}\}$ and $(\Delta_{+}\setminus\Delta^{S_{\psi}})_{\1}=\{\eta_{1},\ldots,\eta_{q}\}$. 
Since $|\bar{\xi}_{i}|, |\bar{\eta}_{j}|>0$ for all $i,j$, we have that for a given $m\in\Z_{>0}$, $|\bar{\xi}_{i}|+|\bar{\eta}_{j}|=m$ for only finitely many $i,j$. 
If $U(\fr_{\psi}^{-})^{-\lambda}\neq0$, it follows from the proof of (a) that $\displaystyle\lambda=\sum_{i=1}^{p}r_{i}\bar{\xi}_{i}+\sum_{j=1}^{q}s_{j}\bar{\eta_{j}},$ for $r_{i}\in\Z_{\geq0}$  and $s_{j}\in\{0,1\}$. Consequently $\displaystyle|\lambda|=\sum_{i=1}^{p}r_{i}|\bar{\xi}_{i}|+\sum_{j=1}^{q}s_{j}|\bar{\eta}_{j}|$. Since the number of such expressions for $|\lambda|$ is finite, $U(\fr_{\psi}^{-})^{-\lambda}$ must be finite-dimensional. 

\end{proof}






\bigskip

\begin{prop}\label{prop:basic properties of the induced modules}

 \begin{itemize}
\item [(a)] $Z$ acts semisimply on $\widetilde{M}_{\psi,\lambda}$ and
 $$\widetilde{M}_{\psi,\lambda}=\bigoplus_{\mu\in Z^{*},\;\mu\geq0}\widetilde{M}_{\psi,\lambda}^{\overline{\lambda}-\mu},$$  
 where 
 $$\widetilde{M}_{\psi,\lambda}^{\overline{\lambda}-\mu}\cong U(\fr_{\psi}^{-})^{-\mu}\otimes_{\cc} W_{\psi,\chi_{\lambda}}$$
 for all $\mu\in Z^{*},\mu\geq0$ as $\g_{\psi}$-modules. In particular,  $\widetilde{M}^{\overline{\lambda}}_{\psi,\lambda}\cong W_{\psi,\chi_{\lambda}}.$
\item [(b)] For all $\mu$, $\widetilde{M}^{\overline{\lambda}-\mu}_{\psi,\lambda}$ has finite length as a $\g_{\psi}$-module.
\item [(c)]  $\widetilde{M}_{\psi,\lambda}\cong \widetilde{M}_{\psi,\mu}$ if and only if $W_{\psi}\cdot\lambda=W_{\psi}\cdot\mu.$
\end{itemize}
\end{prop}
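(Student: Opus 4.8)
The plan is to prove parts (a), (b), (c) in order, leaning heavily on the $\g_\psi$-module isomorphism $\widetilde{M}_{\psi,\lambda}\cong U(\fr_\psi^-)\otimes_\cc W_{\psi,\chi_\lambda}$ from \eqref{eqn:basic property of induced for g} together with the structure of $U(\fr_\psi^-)$ established in Lemma \ref{lem:Z acts semisimply}.

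For part (a), I would start from $\widetilde{M}_{\psi,\lambda}\cong U(\fr_\psi^-)\otimes_\cc W_{\psi,\chi_\lambda}$ and decompose the first tensor factor using Lemma \ref{lem:Z acts semisimply}(a): $U(\fr_\psi^-)=\bigoplus_{\mu\geq0}U(\fr_\psi^-)^{-\mu}$. The key observation is that $Z\subseteq\h\subseteq\g_\psi$ acts on $W_{\psi,\chi_\lambda}$ by a scalar: since $w_{\psi,\lambda}$ is a Whittaker vector and $Z$ is central in $\g_\psi$, every element of $Z$ acts on $W_{\psi,\chi_\lambda}$ by the scalar $\bar\lambda$ — this follows because $\zeta\in Z$ acts on $\g_\psi$ trivially via the bracket on the center, and more precisely one checks $\zeta w_{\psi,\lambda}=\bar\lambda(\zeta)w_{\psi,\lambda}$ using that $\zeta-\chi_\lambda(\zeta)$ sits in the relevant augmentation ideal, so $Z$ acts as $\bar\lambda$ on the cyclic generator and hence on all of $W_{\psi,\chi_\lambda}$. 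Then on a simple tensor $u\otimes v$ with $u\in U(\fr_\psi^-)^{-\mu}$, an element $\zeta\in Z$ acts (via the coproduct, i.e. the $\g$-action on the tensor product realization of $\widetilde{M}_{\psi,\lambda}$) as $(\ad\zeta)u\otimes v + u\otimes\zeta v = -\mu(\zeta)u\otimes v+\bar\lambda(\zeta)u\otimes v=(\bar\lambda-\mu)(\zeta)\,u\otimes v$. This gives the weight-space decomposition, the identification $\widetilde{M}_{\psi,\lambda}^{\bar\lambda-\mu}\cong U(\fr_\psi^-)^{-\mu}\otimes_\cc W_{\psi,\chi_\lambda}$ as $\g_\psi$-modules, and in particular $\widetilde{M}_{\psi,\lambda}^{\bar\lambda}\cong U(\fr_\psi^-)^0\otimes_\cc W_{\psi,\chi_\lambda}=\cc1\otimes W_{\psi,\chi_\lambda}\cong W_{\psi,\chi_\lambda}$, using $U(\fr_\psi^-)^0=\cc1$. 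Part (b) is then immediate: $\widetilde{M}_{\psi,\lambda}^{\bar\lambda-\mu}\cong U(\fr_\psi^-)^{-\mu}\otimes_\cc W_{\psi,\chi_\lambda}$, and $U(\fr_\psi^-)^{-\mu}$ is finite-dimensional by Lemma \ref{lem:Z acts semisimply}(b), so it is a finite direct sum of copies of $W_{\psi,\chi_\lambda}$ (up to the $\g_\psi$-module structure, the tensor-with-finite-dimensional construction produces a module of finite length over $\g_\psi$ since each $W_{\psi,\chi_\lambda}$ has a single isomorphism class appearing and the construction is exact in the finite-dimensional factor — more carefully, filter $U(\fr_\psi^-)^{-\mu}$ by $\g_\psi$-submodules with $1$-dimensional subquotients and tensor through).

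For part (c), the forward direction is the substantive one. First, the ``if'' direction: if $W_\psi\cdot\lambda=W_\psi\cdot\mu$ then $\chi_\lambda=\chi_\mu$ as characters of $Z(\g_\psi)$ (Harish-Chandra), hence $W_{\psi,\chi_\lambda}\cong W_{\psi,\chi_\mu}$ by uniqueness in Proposition \ref{prop:unique simple Whittaker for g_psi} — but note this only pins down the $\fs_\psi$-central character, and one must also match $\bar\lambda=\bar\mu$ on $\mathfrak z$; here I need $W_\psi\cdot\lambda=W_\psi\cdot\mu$ to force $\lambda|_Z=\mu|_Z$, which holds because $W_\psi=W_{\g_\psi}$ fixes $Z$ pointwise (the Weyl group of the reductive $\g_\psi$ acts trivially on the center $Z$), so $\bar\lambda=\overline{w\cdot\lambda}=\bar\mu$. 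Then $W_{\psi,\chi_\lambda}\cong W_{\psi,\chi_\mu}$ as $\g_\psi$-modules, and inducing gives $\widetilde{M}_{\psi,\lambda}\cong\widetilde{M}_{\psi,\mu}$. For the ``only if'' direction, suppose $\phi:\widetilde{M}_{\psi,\lambda}\xrightarrow{\sim}\widetilde{M}_{\psi,\mu}$. By part (a), both modules decompose under $Z$; $\phi$ must carry $Z$-weight spaces to $Z$-weight spaces, and the set of $Z$-weights of $\widetilde{M}_{\psi,\lambda}$ is $\{\bar\lambda-\nu:\nu\geq0,\ U(\fr_\psi^-)^{-\nu}\neq0\}$, whose unique maximal element (in the partial order on $Z^*$) is $\bar\lambda$. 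Hence $\phi$ maps $\widetilde{M}_{\psi,\lambda}^{\bar\lambda}$ isomorphically onto $\widetilde{M}_{\psi,\mu}^{\bar\mu}$ and forces $\bar\lambda=\bar\mu$; by part (a) again this restricts to a $\g_\psi$-isomorphism $W_{\psi,\chi_\lambda}\cong W_{\psi,\chi_\mu}$. Comparing central characters on $Z(\fs_\psi)$ and applying McDowell's/Kostant's classification of simple Whittaker modules for the semisimple $\fs_\psi$ (equivalence of central character with $W_{\fs_\psi}$-orbit, via Harish-Chandra), together with $\bar\lambda=\bar\mu$, yields $W_\psi\cdot\lambda=W_\psi\cdot\mu$.

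The main obstacle I anticipate is part (c)'s ``only if'' direction, specifically the bookkeeping of separating the semisimple and central contributions: one must be careful that ``$\widetilde{M}_{\psi,\lambda}\cong\widetilde{M}_{\psi,\mu}$ as $\g$-modules'' is exploited correctly — the isomorphism is a priori only of $\g$-modules, and restricting to $\g_\psi$ (or even just to $Z$, then to $\fs_\psi$) to extract the needed data requires knowing that the top $Z$-weight space is canonically $\g_\psi$-stable and equals $W_{\psi,\chi_\lambda}$, which is exactly what part (a) provides. A secondary subtlety is justifying that $Z$ acts on $W_{\psi,\chi_\lambda}$ by the scalar $\bar\lambda$ rather than some twist; this is where the decomposition $\g_\psi=\fs_\psi\oplus Z$ and the construction of $W_{\psi,\chi_\lambda}$ in the proof of Proposition \ref{prop:unique simple Whittaker for g_psi} (where $\mathfrak z$, and more generally $Z$, was set to act by $\bar\chi_\lambda$) must be invoked explicitly.
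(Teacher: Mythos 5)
Your parts (a) and (c) follow essentially the same route as the paper: (a) transports the weight decomposition of $U(\fr_{\psi}^{-})$ from Lemma \ref{lem:Z acts semisimply} through the isomorphism (\ref{eqn:basic property of induced for g}) via the computation $\zeta(u\otimes v)=[\zeta,u]\otimes v+u\otimes\zeta v=(\bar\lambda-\mu)(\zeta)(u\otimes v)$, and (c) reduces to comparing the top $Z$-weight spaces $\widetilde{M}_{\psi,\lambda}^{\bar\lambda}\cong W_{\psi,\chi_\lambda}$ and invoking Proposition \ref{prop:unique simple Whittaker for g_psi} together with the Harish--Chandra description of $\chi_\lambda$. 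Your added care in (a) about why $Z$ acts on $W_{\psi,\chi_\lambda}$ by the scalar $\bar\lambda$ (the $\rho_\psi$-shift vanishes on $Z$ since all roots of $\g_\psi$ kill $Z$), and in (c) about why a $\g$-module isomorphism must match the unique maximal $Z$-weights and hence force $\bar\lambda=\bar\mu$ before identifying the top layers, are both correct and fill in details the paper leaves implicit.

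Part (b), however, contains a genuine gap. You assert that $U(\fr_{\psi}^{-})^{-\mu}\otimes_{\cc}W_{\psi,\chi_\lambda}$ ``is a finite direct sum of copies of $W_{\psi,\chi_\lambda}$,'' and your fallback is to ``filter $U(\fr_\psi^-)^{-\mu}$ by $\g_\psi$-submodules with $1$-dimensional subquotients and tensor through.'' Neither claim is correct: $\g_\psi$ is reductive with semisimple part $\fs_\psi$ generally nonzero, so the finite-dimensional $\g_\psi$-module $U(\fr_{\psi}^{-})^{-\mu}$ decomposes into simple summands that are typically \emph{not} one-dimensional (already $\fr_\psi^-$ itself is a sum of nontrivial $\g_\psi$-root strings), so no such filtration exists. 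And even granting a filtration with simple finite-dimensional subquotients $E$, the module $E\otimes W_{\psi,\chi_\lambda}$ is not a sum of copies of $W_{\psi,\chi_\lambda}$ — its composition factors are simple Whittaker modules with \emph{shifted} central characters, and the finiteness of its length is precisely the content of Kostant's theorem (\cite[Thm 4.6]{Ko1978}), applied to $\fs_\psi$, which is what the paper cites at this point. This is a substantive input, not something that follows formally from finite-dimensionality of $U(\fr_{\psi}^{-})^{-\mu}$; your proof of (b) should invoke it explicitly rather than attempt the elementary reduction.
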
 

\begin{proof}


\par (a) By Lemma \ref{lem:Z acts semisimply}, $Z$ acts semisimply on $ U(\fr^{-}_{\psi})$ via the adjoint action and
$\displaystyle U(\fr^{-}_{\psi})=\bigoplus_{\mu\in Z^{*},\;\mu\geq0} U(\fr^{-})^{-\mu}.$ Clearly the isomorphism $f$ of (\ref{eqn:basic property of induced for g}) maps
$ U(\fr^{-}_{\psi})^{-\mu}\otimes_{\complex}W_{\psi,\chi_{\lambda}}$ isomorphically onto $\widetilde{M}_{\psi,\lambda}^{\bar{\lambda}-\mu}$ for every $\mu\geq0$ since

$\zeta(u\otimes v)=[\zeta,u]\otimes v+u\otimes \zeta v=(\bar{\lambda}-\mu)(\zeta)(u\otimes v)$ for any $\zeta\in Z$, $u\in U(\fr^{-}_{\psi})^{-\mu}$, $v\in W_{\psi,\chi_{\lambda}}$. In particular, if
$\mu=0$, then $\widetilde{M}_{\psi,\lambda}^{\bar{\lambda}}=W_{\psi,\chi_{\lambda}}$ since $ U(\fr_{\psi}^{-})^{0}\cong\complex.$

\par (b) As an $\fs_{\psi}$-module, $U(\fr_{\psi}^{-})^{-\mu}\otimes_{\cc}W_{\psi,\chi_{\lambda}}$ is the tensor product of a finite dimensional $\fs_{\psi}$-module and the simple Whittaker $\fs_{\psi}$-module $W_{\psi,\chi_{\lambda}}.$ Consequently, $U(\fr_{\psi}^{-})^{-\mu}\otimes_{\cc}W_{\psi,\chi_{\lambda}}$ has finite length as an $\fs_{\psi}$-module by \cite[Thm 4.6]{Ko1978}, and hence as a $\g_{\psi}$-module.

\par (c) The proof is similar to that of \cite[Prop. 2.1 (1)]{milicic-soergel}. Suppose that $W_{\psi}\cdot \lambda=W_{\psi}\cdot\mu$. Then $\widetilde{\theta}_{\psi}(\lambda)=\widetilde{\theta}_{\psi}(\mu)$ and this  implies that $\chi_{\lambda}=\chi_{\mu}$. Hence $\widetilde{M}_{\psi,\lambda}=\widetilde{M}_{\psi,\mu}$. On the other hand, if $\widetilde{M}_{\psi,\lambda}\cong \widetilde{M}_{\psi,\mu},$ then by part (a), $\widetilde{M}_{\psi,\lambda}^{\bar\lambda}\cong \widetilde{M}_{\psi,\mu}^{\bar\mu}.$ Therefore, $W_{\psi,\chi_{\lambda}}\cong W_{\psi,\chi_{\mu}}$ which implies that $\chi_{\lambda}=\chi_{\mu}$ by Proposition \ref{prop:unique simple Whittaker for g_psi}. Consequently,  $W_{\psi}\cdot \lambda=W_{\psi}\cdot\mu.$ This completes the proof. \end{proof}

\begin{prop} \label{prop:proper submodules of the induced modules}
A $\g$-submodule of $\widetilde{M}_{\psi,\lambda}$ is proper if and only if it is contained in $\displaystyle\bigoplus_{\mu\in Z^{*},\;\mu>0}\widetilde{M}_{\psi,\lambda}^{\overline{\lambda}-\mu}.$ Moreover, $\widetilde{M}_{\psi,\lambda}$ has a unique maximal submodule and a unique irreducible  quotient $\widetilde{L}_{\psi,\lambda}$.
\end{prop}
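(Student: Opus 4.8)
The plan is to exploit the $Z$-grading of $\widetilde{M}_{\psi,\lambda}$ recorded in Proposition \ref{prop:basic properties of the induced modules}, together with the fact that the top graded piece $\widetilde{M}_{\psi,\lambda}^{\overline{\lambda}}\cong W_{\psi,\chi_{\lambda}}$ is a \emph{simple} $\g_{\psi}$-module that contains the cyclic generator $\tilde w_{\psi,\lambda}=1\otimes w_{\psi,\lambda}$.

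First I would record two elementary observations. (i) Since $Z\subseteq\g$ acts semisimply on $\widetilde{M}_{\psi,\lambda}$ with weight spaces the $\widetilde{M}_{\psi,\lambda}^{\overline{\lambda}-\mu}$ $(\mu\geq0)$, any $\g$-submodule $N$ is in particular a $Z$-submodule, so $N=\bigoplus_{\mu\geq0}\bigl(N\cap\widetilde{M}_{\psi,\lambda}^{\overline{\lambda}-\mu}\bigr)$. (ii) Because $Z$ is the center of $\g_{\psi}$, each weight space $\widetilde{M}_{\psi,\lambda}^{\overline{\lambda}-\mu}$ is stable under all of $\g_{\psi}$; hence $N\cap\widetilde{M}_{\psi,\lambda}^{\overline{\lambda}}$ is a $\g_{\psi}$-submodule of $\widetilde{M}_{\psi,\lambda}^{\overline{\lambda}}\cong W_{\psi,\chi_{\lambda}}$, which is simple by Proposition \ref{prop:unique simple Whittaker for g_psi}.

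Next I would prove the stated equivalence. Suppose a $\g$-submodule $N$ is not contained in $\bigoplus_{\mu>0}\widetilde{M}_{\psi,\lambda}^{\overline{\lambda}-\mu}$; then by (i) the space $N\cap\widetilde{M}_{\psi,\lambda}^{\overline{\lambda}}$ is nonzero, so by (ii) and simplicity it equals all of $\widetilde{M}_{\psi,\lambda}^{\overline{\lambda}}$ and in particular contains $\tilde w_{\psi,\lambda}$. Since $\widetilde{M}_{\psi,\lambda}=U(\g)\,\tilde w_{\psi,\lambda}$, this forces $N=\widetilde{M}_{\psi,\lambda}$, so $N$ is not proper. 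Conversely, if $N\subseteq\bigoplus_{\mu>0}\widetilde{M}_{\psi,\lambda}^{\overline{\lambda}-\mu}$, then, the decomposition $\widetilde{M}_{\psi,\lambda}=\bigoplus_{\mu\geq0}\widetilde{M}_{\psi,\lambda}^{\overline{\lambda}-\mu}$ being direct and $\widetilde{M}_{\psi,\lambda}^{\overline{\lambda}}\neq0$, the vector $\tilde w_{\psi,\lambda}\in\widetilde{M}_{\psi,\lambda}^{\overline{\lambda}}$ does not lie in $N$; hence $N$ is proper. This establishes the first assertion.

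Finally, for the ``moreover'' part I would let $\widetilde{N}$ be the sum of all proper $\g$-submodules of $\widetilde{M}_{\psi,\lambda}$; it is a $\g$-submodule, and by the equivalence just proved every proper submodule, hence $\widetilde{N}$ itself, lies in $\bigoplus_{\mu>0}\widetilde{M}_{\psi,\lambda}^{\overline{\lambda}-\mu}$, so $\widetilde{N}$ is again proper by the same equivalence. Thus $\widetilde{N}$ is the unique maximal submodule and $\widetilde{L}_{\psi,\lambda}:=\widetilde{M}_{\psi,\lambda}/\widetilde{N}$ is the unique irreducible quotient. The only point that genuinely needs care — the main ``obstacle'', such as it is — is observation (ii): one must verify that the top $Z$-weight space is stable under all of $\g_{\psi}$ (not merely under $\h$) so that simplicity of $W_{\psi,\chi_{\lambda}}$ applies, and that the cyclic vector $\tilde w_{\psi,\lambda}$ really sits in this top piece, both of which are supplied by Proposition \ref{prop:basic properties of the induced modules}(a). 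Everything else is a formal consequence of the $Z$-semisimple decomposition. Note that the argument does \emph{not} require $\bigoplus_{\mu>0}\widetilde{M}_{\psi,\lambda}^{\overline{\lambda}-\mu}$ to itself be a $\g$-submodule, only that it is a proper subspace containing all proper submodules.
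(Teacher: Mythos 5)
Your proof is correct and follows essentially the same route as the paper's: decompose any submodule along the $Z$-weight spaces, use simplicity of the top piece $\widetilde{M}_{\psi,\lambda}^{\overline{\lambda}}\cong W_{\psi,\chi_{\lambda}}$ as a $\g_{\psi}$-module together with the fact that $\tilde w_{\psi,\lambda}$ generates $\widetilde{M}_{\psi,\lambda}$, and then take the sum of all proper submodules. The paper states this more tersely; you have simply filled in the same steps.
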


\begin{proof}It  is a standard fact about $Z$-weight modules that every $\g$-submodule $N$ of $\widetilde{M}_{\psi,\lambda}$ has a decomposition $\displaystyle N=\bigoplus_{\mu\in Z^{*},\;\mu\geq0}(N\cap\widetilde{M}_{\psi,\lambda}^{\overline{\lambda}-\mu})$. The first statement follows from this decomposition and the fact that
$\widetilde{M}_{\psi,\lambda}^{\bar{\lambda}}\cong W_{\psi,\chi_{\lambda}}$ is a simple $\g_{\psi}$-module. Let $M$ be the sum of all proper $ U(\g)$-submodules of $\widetilde{M}_{\psi,\lambda}$. It follows by the above fact that $\displaystyle M\subseteq\bigoplus_{\mu\in Z^{*},\;\mu>0}\widetilde{M}_{\psi,\lambda}^{\bar{\lambda}-\mu}.$ Thus, $M$ is a proper $ U(\g)$-submodule
of $\widetilde{M}_{\psi,\lambda}$. As $M$ contains all proper submodules, it is the unique maximal submodule of $\widetilde{M}_{\psi,\lambda}$, and
$\widetilde{L}_{\psi,\lambda}:=\widetilde{M}_{\psi,\lambda}/M$ is the unique irreducible quotient of $\widetilde{M}_{\psi,\lambda}.$
\end{proof}

By Proposition \ref{prop:proper submodules of the induced modules} and Proposition \ref{prop:basic properties of the induced modules}(c), it follows that
 $\widetilde{L}_{\psi\lambda}\cong \widetilde{L}_{\psi,\mu}$ if and only if $W_{\psi}\cdot\lambda=W_{\psi}\cdot\mu.$

We now wish to determine when $\widetilde{M}_{\psi,\lambda}$ contains proper submodules. The following corollary follows easily from Proposition \ref{prop:proper submodules of the induced modules}.

\begin{cor}\label{cor:proper submodules-maximal vectors}$\widetilde{M}_{\psi,\lambda}$ contains a proper submodule if and only if there exists $\mu\in Z^{*},$ $\mu>0$ and $0 \neq v \in \widetilde{M}_{\psi,\lambda}^{\bar{\lambda}-\mu}$ such that $\fr_{\psi}^{+}v=0.$\end{cor}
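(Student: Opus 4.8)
The plan is to extract this statement directly from Proposition \ref{prop:proper submodules of the induced modules} together with the $Z$-weight space decomposition of Proposition \ref{prop:basic properties of the induced modules}(a). For the ``if'' direction: suppose $\mu \in Z^*$, $\mu > 0$, and $0 \neq v \in \widetilde{M}_{\psi,\lambda}^{\bar\lambda - \mu}$ with $\fr_\psi^+ v = 0$. I would consider the submodule $N = U(\g)v$ and observe that, since $v$ is killed by $\fr_\psi^+$ and lies in a $\g_\psi$-submodule on which $Z$ acts by $\bar\lambda - \mu$, we have $N = U(\fr_\psi^-)U(\g_\psi)v$ by the PBW theorem applied to $\g = \fr_\psi^- \oplus \g_\psi \oplus \fr_\psi^+$ (using $\g_{\pm 1}$ and $\fm_\psi^\pm$ as the relevant pieces, exactly as in the construction of $\widetilde{M}_{\psi,\lambda}$). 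Then every $Z$-weight of $N$ is of the form $\bar\lambda - \mu - \nu$ with $\nu \geq 0$ in $Z^*$ (because $U(\fr_\psi^-)$ lowers the $Z$-weight by elements $\geq 0$, by Lemma \ref{lem:Z acts semisimply}(a)), and $U(\g_\psi)$ preserves the $Z$-weight. Since $\mu > 0$, no weight of $N$ equals $\bar\lambda$, so $N \cap \widetilde{M}_{\psi,\lambda}^{\bar\lambda} = 0$; by the first statement of Proposition \ref{prop:proper submodules of the induced modules}, $N$ is proper, hence $\widetilde{M}_{\psi,\lambda}$ contains a proper submodule.

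For the ``only if'' direction: suppose $N$ is a nonzero proper $\g$-submodule of $\widetilde{M}_{\psi,\lambda}$. By Proposition \ref{prop:proper submodules of the induced modules}, $N \subseteq \bigoplus_{\mu > 0} \widetilde{M}_{\psi,\lambda}^{\bar\lambda - \mu}$, and by the $Z$-weight decomposition $N = \bigoplus_{\mu \geq 0}(N \cap \widetilde{M}_{\psi,\lambda}^{\bar\lambda - \mu})$. Pick $\mu > 0$ minimal in the partial order on $Z^*$ (which exists since $|\cdot|$ is a $\Z_{\geq 0}$-valued additive function, so the set of $\mu$ with $N \cap \widetilde{M}_{\psi,\lambda}^{\bar\lambda-\mu} \neq 0$ has a minimal element) such that $N^{\bar\lambda - \mu} := N \cap \widetilde{M}_{\psi,\lambda}^{\bar\lambda - \mu} \neq 0$. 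For any $0 \neq v \in N^{\bar\lambda - \mu}$ and any $x \in \fr_\psi^+$, the element $xv$ lies in $N$ and has $Z$-weight $\bar\lambda - \mu + \bar\gamma$ for the appropriate positive root $\gamma \in \Delta_+ \setminus \Delta^{S_\psi}_+$; since $\bar\gamma > 0$ by Lemma \ref{lem:basis of Z^*}(b), this weight is strictly larger than $\bar\lambda - \mu$, and by minimality of $\mu$ (together with the fact that $N \subseteq \bigoplus_{\nu > 0}$, so the weight $\bar\lambda$ itself is also excluded) we get $xv = 0$. Thus $\fr_\psi^+ v = 0$, giving the desired vector.

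The main obstacle, and the only point requiring care, is making the minimality argument for $\mu$ precise: one must confirm that the partial order on $Z^*$ restricted to the (countable) set of $Z$-weights actually occurring in $N$ admits minimal elements, and that ``raising by $\bar\gamma$'' genuinely lands outside the support of $N$ below $\bar\lambda$. Both follow cleanly from the additive norm $|\cdot|$ on the positive cone $\bigoplus_i \Z_{\geq 0}\bar\beta_i$ introduced in Lemma \ref{lem:Z acts semisimply}(b): the set $\{|\mu| : N^{\bar\lambda-\mu} \neq 0, \mu > 0\}$ is a nonempty subset of $\Z_{> 0}$, so it has a least element, and any $\mu$ realizing it is minimal. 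Everything else is a routine bookkeeping of $Z$-weights, and I would present it as a short direct deduction from the two preceding propositions rather than reproving the weight-space machinery.
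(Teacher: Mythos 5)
Your argument is correct and is exactly the deduction the paper intends: the authors merely assert that the corollary ``follows easily'' from Proposition \ref{prop:proper submodules of the induced modules}, and both of your directions --- the PBW computation $U(\g)v=U(\fr_{\psi}^{-})U(\g_{\psi})v\subseteq\bigoplus_{\nu>0}\widetilde{M}_{\psi,\lambda}^{\bar{\lambda}-\nu}$ for the ``if'' direction (which appears essentially verbatim in the paper's proof of Proposition \ref{prop:criterion for irreducibility for the induced modules}), and the choice of a weight $\mu$ of minimal $|\mu|$ in the support of the proper submodule for the ``only if'' direction --- are the standard way to fill in the details. I see no gaps.
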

In this case we call $v$ a {\it maximal vector}. 

\begin{prop}\label{prop:criterion for irreducibility for the induced modules}
$\widetilde{M}_{\psi,\lambda}$ is simple if and only if $\widetilde{M}_{\psi,\lambda}$ contains a unique (up to scalar) cyclic Whittaker vector.
\end{prop}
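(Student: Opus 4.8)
The plan is to prove both implications by relating simplicity of $\widetilde M_{\psi,\lambda}$ to the one-dimensionality of its space of cyclic Whittaker vectors, using the $Z$-weight decomposition of Proposition~\ref{prop:basic properties of the induced modules}(a) and the structure of the unique maximal submodule from Proposition~\ref{prop:proper submodules of the induced modules}. One direction is essentially formal: if $\widetilde M_{\psi,\lambda}$ is simple, then it equals its own unique irreducible quotient, so any Whittaker vector generates the whole module; and since $\widetilde M_{\psi,\lambda}^{\bar\lambda}\cong W_{\psi,\chi_\lambda}$ contains a unique (up to scalar) Whittaker vector by Proposition~\ref{prop:unique simple Whittaker for g_psi}, while a Whittaker vector lying in a higher $Z$-weight space $\widetilde M_{\psi,\lambda}^{\bar\lambda-\mu}$ with $\mu>0$ would generate a proper submodule by Corollary~\ref{cor:proper submodules-maximal vectors} (a cyclic Whittaker vector is in particular a maximal vector, being killed by $\fr_\psi^+\supseteq\g_1$ and acted on by $\n_0^+$ via $\psi$), simplicity forces every cyclic Whittaker vector into $\widetilde M_{\psi,\lambda}^{\bar\lambda}$, hence uniqueness up to scalar.

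For the converse, suppose $\widetilde M_{\psi,\lambda}$ has a unique (up to scalar) cyclic Whittaker vector but is not simple. Then by Proposition~\ref{prop:proper submodules of the induced modules} it has a unique maximal submodule $M\subseteq\bigoplus_{\mu>0}\widetilde M_{\psi,\lambda}^{\bar\lambda-\mu}$, which is nonzero. The aim is to produce inside $M$ a Whittaker vector distinct from $\tilde w_{\psi,\lambda}$, contradicting uniqueness. First I would pass to a minimal nonzero $\g$-submodule, or equivalently an irreducible submodule $N$ of $M$ (which exists since $\widetilde M_{\psi,\lambda}$ has finite length as a $\g$-module, a fact established just before this proposition via Proposition~\ref{prop: filtration of standard Whittaker as g_0-module} and its corollary, combined with the $\g_0$-module structure). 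Now $N$ lies in $\W(\g,\n,\psi)$ — it is a submodule of $\widetilde M_{\psi,\lambda}\in\W(\g,\n)$, and its $\n$-subquotients are among those of $\widetilde M_{\psi,\lambda}$, hence $\cong I(\psi)$ up to $\Pi$ by the arguments of Section~\ref{section:WhittakerCategories}. Since $\psi([\n^+_0,\n^+_0])=0$ and (for our type I algebras) the $\n$-module $I(\psi)=\cc_\psi$ is one-dimensional, Lemma~\ref{lem: any module in Whittaker cat contains Whittaker vector} gives a nonzero Whittaker vector $w\in N\subseteq M$.

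It remains to see that such a $w$ is actually a \emph{cyclic} Whittaker vector, i.e.\ that it generates $\widetilde M_{\psi,\lambda}$ as a $\g$-module — then $w$ and $\tilde w_{\psi,\lambda}$ are two non-proportional cyclic Whittaker vectors (non-proportional because $w\in M$ while $\tilde w_{\psi,\lambda}\notin M$, as $\tilde w_{\psi,\lambda}$ generates the whole module), which is the desired contradiction. This is the step I expect to be the main obstacle, since a priori $w$ only generates the proper submodule $N$. The resolution I would pursue: use that $\widetilde M_{\psi,\lambda}$ contains, by construction, a cyclic Whittaker vector $\tilde w_{\psi,\lambda}$ spanning $\Wh_\psi(\widetilde M_{\psi,\lambda}^{\bar\lambda})$, and argue that if $w$ is \emph{not} cyclic then $w$ generates a proper submodule, so by Proposition~\ref{prop:proper submodules of the induced modules} that submodule — and hence $w$ — lies in $\bigoplus_{\mu>0}\widetilde M_{\psi,\lambda}^{\bar\lambda-\mu}$, making $w$ a maximal vector in a strictly positive $Z$-weight, genuinely distinct from the weight-$\bar\lambda$ vector $\tilde w_{\psi,\lambda}$; in either case (cyclic or not) we have exhibited a second Whittaker vector not proportional to $\tilde w_{\psi,\lambda}$. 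Thus the hypothesis of a unique cyclic Whittaker vector must be reinterpreted, or — more likely what the authors intend — the statement is that $\dim\Wh_\psi(\widetilde M_{\psi,\lambda})=1$ forces simplicity, and the "cyclic" qualifier is automatic because every Whittaker vector of $\widetilde M_{\psi,\lambda}$ is shown to lie in $\widetilde M_{\psi,\lambda}^{\bar\lambda}$ precisely when the module is simple; I would therefore organize the proof around computing $\Wh_\psi(\widetilde M_{\psi,\lambda})$ directly. Concretely: $\widetilde M_{\psi,\lambda}$ simple $\iff$ its maximal submodule $M$ is zero $\iff$ there is no maximal vector in positive $Z$-weight (Corollary~\ref{cor:proper submodules-maximal vectors}); and I would check that any maximal vector is in particular a Whittaker vector not proportional to $\tilde w_{\psi,\lambda}$, while conversely any Whittaker vector outside $\cc\tilde w_{\psi,\lambda}$ has a nonzero component in some $\widetilde M_{\psi,\lambda}^{\bar\lambda-\mu}$, $\mu>0$, which (being annihilated by $\fr_\psi^+$, since $w$ is) is a maximal vector — closing the equivalence.
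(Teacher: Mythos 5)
Your proposal is correct and follows essentially the same route as the paper. The paper's ``only if'' direction is exactly your weight-space argument: a Whittaker vector $v$ is annihilated by $\fr_\psi^+$, so either $v\in\widetilde M_{\psi,\lambda}^{\bar\lambda}\cong W_{\psi,\chi_\lambda}$ (forcing $v\in\cc\tilde w_{\psi,\lambda}$ by Proposition \ref{prop:unique simple Whittaker for g_psi}) or $U(\g)v=U(\fr_\psi^-)U(\g_\psi)v$ sits inside $\bigoplus_{\mu>0}\widetilde M_{\psi,\lambda}^{\bar\lambda-\mu}$ and is proper. Your suspicion about the ``if'' direction is also borne out: the paper simply reads the hypothesis as $\dim_\cc\Wh_\psi(\widetilde M_{\psi,\lambda})=1$ and invokes Corollary \ref{cor:Whittaker module with 1-dim'l space of Whittaker vectors is irreducible}, which is precisely your argument that a nonzero proper submodule must contain a Whittaker vector (via Lemma \ref{lem: any module in Whittaker cat contains Whittaker vector}) not proportional to $\tilde w_{\psi,\lambda}$; so what is really proved is ``simple $\iff\dim\Wh_\psi=1$,'' and your worry about non-cyclic Whittaker vectors is a fair criticism of the statement's phrasing rather than a defect of your argument. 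One small correction to your closing reorganization: a maximal vector is only required to be killed by $\fr_\psi^+$ and homogeneous of positive weight drop; it need not satisfy $xv=\psi(x)v$ for $x\in\n_\psi$, so it is \emph{not} automatically a Whittaker vector. The implication ``maximal vector in positive weight $\Rightarrow$ extra Whittaker vector'' still holds, but only through the detour you already described (the proper submodule it generates contains a Whittaker vector by Lemma \ref{lem: any module in Whittaker cat contains Whittaker vector}), not directly.
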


\begin{proof} If ${\rm dim} _{\cc}{\rm Wh}_{\psi} \widetilde{M}_{\psi,\lambda}=1,$ then $\widetilde{M}_{\psi,\lambda}$ is simple by  Corollary \ref{cor:Whittaker module with 1-dim'l space of Whittaker vectors is irreducible}. 

Suppose that $\widetilde{M}_{\psi,\lambda}$ is simple and let $v\in \Wh_{\psi}(\widetilde{M}_{\psi,\lambda}).$ Clearly, $v$ is a maximal vector of $\widetilde{M}_{\psi,\lambda}$. Since $\fr_{\psi}^{+}\widetilde{M}_{\psi,\lambda}^{\bar{\lambda}}=0$ we have that either $v\in \widetilde{M}^{\bar{\lambda}}_{\psi,\lambda}$ or $\displaystyle v\in\bigoplus_{\mu>0} \widetilde{M}_{\psi,\lambda}^{\bar{\lambda}-\mu}.$ Now suppose that $\displaystyle v\in\bigoplus_{\mu>0} \widetilde{M}_{\psi,\lambda}^{\bar{\lambda}-\mu}.$ Then,  $\displaystyle N=U(\g)v=U(\fr_{\psi}^{-})U(\g_{\psi})v\subseteq \bigoplus_{\mu>0} \widetilde{M}_{\psi,\lambda}^{\bar{\lambda}-\mu}$ by Lemma \ref{lem:Z acts semisimply}(a). Then, by Proposition \ref{prop:proper submodules of the induced modules}, $N$ is a proper submodule of $\widetilde{M}_{\psi,\lambda}$ which is a contradiction.  Hence, $v\in \widetilde{M}_{\psi,\lambda}^{\bar{\lambda}}.$ But $\widetilde{M}_{\psi,\lambda}^{\bar{\lambda}}\cong W_{\psi,\chi_{\lambda}}$ as $\g_{\psi}$-modules by \ref{prop:basic properties of the induced modules}(a). By Proposition \ref{prop:unique simple Whittaker for g_psi}, it follows that $v=k \tilde{w}_{\psi,\lambda}$ for some $k\in\cc$, which completes the proof. \end{proof}


\subsubsection{} Let $\psi\in\lm,$ $\psi\neq0$ and we continue to  assume that $\psi$ is singular if $\g=\fpsl(n,n)$. We show that the modules $\widetilde{M}_{\psi,\lambda}$ are not in general simple. Recall the definition of $M_{\psi,\lambda}$ in (\ref{eqn:induced for g_0}).

\begin{prop}\label{prop:weight space decomposition of induced for g_0} \cite[Prop. 2.4]{mcdowell} $Z$ acts semisimply on $M_{\psi,\lambda}$, and  
\begin{equation}\label{eqn:decomposition of M_psi,lambda}\displaystyle M_{\psi,\lambda}=\bigoplus_{\nu\in Z^{*},\;\nu\geq0}M_{\psi,\lambda}^{\bar\lambda-\nu},\end{equation} where  \begin{equation}\label{eqn:weightspaces of induced for g_0}M_{\psi,\lambda}^{\bar\lambda-\nu}\cong U(\fm_{\psi}^{-})^{-\nu}\otimes_{\cc} W_{\psi,\chi_{\lambda}}\end{equation} as $\g_{\psi}$-modules for each $\nu>0$ in $Z^{*}$ and $M_{\psi,\lambda}^{\bar{\lambda}}\cong W_{\psi,\chi_{\lambda}}.$
\end{prop}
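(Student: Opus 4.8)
The plan is to run the argument of Proposition~\ref{prop:basic properties of the induced modules}(a) with the reductive Lie algebra $\g_{0}$ and its parabolic $\fp_{\psi}^{0}=\g_{\psi}\oplus\fm_{\psi}^{+}$ in place of $\g$ and $\fp_{\psi}^{+}$. (This is also exactly the content of \cite[Prop. 2.4]{mcdowell}, whose proof goes through once one passes from the semisimple to the reductive setting, as remarked above; we indicate the parallel argument for completeness.)

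First I would record the $\fm_{\psi}^{-}$-analogue of Lemma~\ref{lem:Z acts semisimply}(a): letting $\g_{\psi}$ (hence $Z$) act on $U(\fm_{\psi}^{-})\subseteq U(\g_{0})$ by the adjoint action,
$$U(\fm_{\psi}^{-})=\bigoplus_{\nu\in Z^{*},\;\nu\geq0}U(\fm_{\psi}^{-})^{-\nu},\qquad U(\fm_{\psi}^{-})^{0}=\cc 1 .$$
The proof is the computation in Lemma~\ref{lem:Z acts semisimply}(a) restricted to the even roots: writing $(\Delta_{+}\setminus\Delta^{S_{\psi}}_{+})_{\0}=\{\xi_{1},\dots,\xi_{p}\}$ and choosing root vectors $y_{i}\in\g_{-\xi_{i}}$, the PBW monomials $y_{1}^{r_{1}}\cdots y_{p}^{r_{p}}$ form a basis of $U(\fm_{\psi}^{-})$, each lying in $U(\fm_{\psi}^{-})^{-\nu}$ for $\nu=\sum_{i}r_{i}\bar\xi_{i}$; by Lemma~\ref{lem:basis of Z^*} every $\bar\xi_{i}>0$ in $Z^{*}$, so $\nu\geq0$ always, and $\nu=0$ forces $r_{1}=\dots=r_{p}=0$.

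Next I would transport this grading across the $\g_{\psi}$-module isomorphism $M_{\psi,\lambda}\cong U(\fm_{\psi}^{-})\otimes_{\cc}W_{\psi,\chi_{\lambda}}$ of (\ref{eqn: basic property of induced for g_0}). Since $Z$ is central in $\g_{\psi}$ and $W_{\psi,\chi_{\lambda}}$ is simple of central character $\chi_{\lambda}$, the center $Z$ acts on $W_{\psi,\chi_{\lambda}}$ by the scalar $\bar\lambda(\zeta)=(\lambda|_{Z})(\zeta)$, because the Harish--Chandra homomorphism $\theta_{\psi}$ restricts to the identity on $U(Z)$ (the relevant $\rho$-shift vanishes on $Z$, as every root in $\Delta^{S_{\psi}}$ does). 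Hence for $\zeta\in Z$, $u\in U(\fm_{\psi}^{-})^{-\nu}$, $w\in W_{\psi,\chi_{\lambda}}$,
$$\zeta(u\otimes w)=[\zeta,u]\otimes w+u\otimes\zeta w=(\bar\lambda-\nu)(\zeta)\,(u\otimes w),$$
so the isomorphism carries $U(\fm_{\psi}^{-})^{-\nu}\otimes_{\cc}W_{\psi,\chi_{\lambda}}$ onto $M_{\psi,\lambda}^{\bar\lambda-\nu}$. Together with the first step this gives the semisimplicity of the $Z$-action, the decomposition (\ref{eqn:decomposition of M_psi,lambda}), and the isomorphisms (\ref{eqn:weightspaces of induced for g_0}) as $\g_{\psi}$-modules; taking $\nu=0$ and using $U(\fm_{\psi}^{-})^{0}=\cc 1$ yields $M_{\psi,\lambda}^{\bar\lambda}\cong W_{\psi,\chi_{\lambda}}$.

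I do not expect a genuine obstacle here, as this is the $\g_{0}$-shadow of the already established Proposition~\ref{prop:basic properties of the induced modules}(a). The only two points requiring care are: (i) the strict inequality $\bar\xi_{i}>0$ (not merely $\geq0$), which is what forces $U(\fm_{\psi}^{-})^{0}$ down to the scalars and which is exactly Lemma~\ref{lem:basis of Z^*}; and (ii) identifying the scalar by which $Z$ acts on $W_{\psi,\chi_{\lambda}}$ with $\bar\lambda$, which one reads off from the construction in Proposition~\ref{prop:unique simple Whittaker for g_psi} together with the behaviour of the Harish--Chandra homomorphism on central elements.
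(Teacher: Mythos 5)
Your proof is correct, but it is worth noting that the paper itself offers no proof of this proposition at all: it is stated as a direct citation of \cite[Prop.\ 2.4]{mcdowell}, backed only by the earlier remark that McDowell's construction for semisimple Lie algebras ``can be extended to reductive Lie algebras and the results that we use remain valid over $\g_{0}$.'' What you have done instead is supply a self-contained argument by specializing the paper's own proof of Proposition~\ref{prop:basic properties of the induced modules}(a) to the even part, i.e.\ replacing $(\g,\fp_{\psi}^{+},\fr_{\psi}^{-})$ by $(\g_{0},\fp_{\psi}^{0},\fm_{\psi}^{-})$ and using (\ref{eqn: basic property of induced for g_0}) in place of (\ref{eqn:basic property of induced for g}). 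This is legitimate: the even-root analogue of Lemma~\ref{lem:Z acts semisimply}(a) goes through verbatim (the strict positivity $\bar\xi_{i}>0$ is exactly Lemma~\ref{lem:basis of Z^*}(b), which the paper itself invokes in the super case), and your identification of the scalar by which $Z$ acts on $W_{\psi,\chi_{\lambda}}$ as $\bar\lambda$ --- via the vanishing of the $\rho$-shift of $\g_{\psi}$ on $Z$ --- is the same fact the paper uses tacitly in the displayed computation $\zeta(u\otimes v)=(\bar\lambda-\mu)(\zeta)(u\otimes v)$ in its proof of Proposition~\ref{prop:basic properties of the induced modules}(a). The trade-off is transparency versus economy: the paper leans on the literature, while your route makes the reduction from the semisimple to the reductive setting explicit and keeps the exposition internally consistent with the super-case proof already given.
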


\par The following Lemma can be easily deduced from Lemma 2.12 in \cite{mcdowell}.

\begin{lem}\label{lem:M_psi,lambda not simple criterion}
$M_{\psi,\lambda}$ contains a proper submodule if and only if there exists $\nu>0$ in $Z^{*}$ and $\displaystyle\hat{v}_{0}\in M_{\psi,\lambda}^{\bar{\lambda}-\nu}$ such that $\fm_{\psi}^{+}\hat {v}_{0}=0.$
\end{lem}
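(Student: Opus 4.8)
The plan is to reduce the statement for $M_{\psi,\lambda}$ to the analogous criterion proved by McDowell for standard Whittaker modules over semisimple Lie algebras, transported through the reductive algebra $\g_0$ and its parabolic $\fp_\psi^0=\g_\psi\oplus\fm_\psi^+$. First I would record the structural facts already available: by Proposition~\ref{prop:weight space decomposition of induced for g_0}, $Z$ acts semisimply on $M_{\psi,\lambda}$ with weight spaces $M_{\psi,\lambda}^{\bar\lambda-\nu}\cong U(\fm_\psi^-)^{-\nu}\otimes_\cc W_{\psi,\chi_\lambda}$ for $\nu\geq0$ in $Z^*$, and $M_{\psi,\lambda}^{\bar\lambda}\cong W_{\psi,\chi_\lambda}$. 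Just as in the proof of Proposition~\ref{prop:proper submodules of the induced modules}, any $\g_0$-submodule $N$ of $M_{\psi,\lambda}$ decomposes as $N=\bigoplus_{\nu\geq0}(N\cap M_{\psi,\lambda}^{\bar\lambda-\nu})$ because it is a $Z$-weight submodule (here one uses that $\beta_j(\h_\psi)=0$, Lemma~\ref{easy lemma-Section 4}, so that the $Z$-weights $\bar\lambda-\nu$, $\nu\geq0$, are genuinely distinct on $M_{\psi,\lambda}$, together with Lemma~\ref{lem:basis of Z^*}).

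Next I would prove the two implications. For the easy direction: if $0\neq\hat v_0\in M_{\psi,\lambda}^{\bar\lambda-\nu}$ with $\nu>0$ satisfies $\fm_\psi^+\hat v_0=0$, then $N=U(\g_0)\hat v_0=U(\fm_\psi^-)U(\g_\psi)\hat v_0$ lies in $\bigoplus_{\nu'>0}M_{\psi,\lambda}^{\bar\lambda-\nu'}$, since $U(\fm_\psi^-)$ raises $\nu$ (each root space $\g_{-\xi}$, $\xi\in\Delta_{\0}^+\setminus\Delta^{S_\psi}_+$, has strictly positive $Z$-weight $-\bar\xi<0$, cf.\ Lemma~\ref{lem:basis of Z^*}(b)) and $U(\g_\psi)$ preserves it; so $N$ misses $M_{\psi,\lambda}^{\bar\lambda}$ and is proper. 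For the converse: suppose $N$ is a nonzero proper submodule. By the weight decomposition $N=\bigoplus_{\nu\geq0}(N\cap M_{\psi,\lambda}^{\bar\lambda-\nu})$; properness together with $M_{\psi,\lambda}^{\bar\lambda}\cong W_{\psi,\chi_\lambda}$ being simple as a $\g_\psi$-module forces $N\cap M_{\psi,\lambda}^{\bar\lambda}=0$, i.e.\ $N\subseteq\bigoplus_{\nu>0}M_{\psi,\lambda}^{\bar\lambda-\nu}$. Now pick $0\neq v\in N$ with nonzero component in a weight space $M_{\psi,\lambda}^{\bar\lambda-\nu}$, $\nu>0$, that is minimal in the partial order among weights occurring in $N$; I would argue that the top component $\hat v_0$ (the $M_{\psi,\lambda}^{\bar\lambda-\nu}$-component, which we may take $v$ itself to be after projecting, since $N$ is a $Z$-weight module) is killed by $\fm_\psi^+$: indeed $\fm_\psi^+$ strictly lowers $\nu$, so $\fm_\psi^+\hat v_0\in N\cap\bigoplus_{\nu'<\nu}M_{\psi,\lambda}^{\bar\lambda-\nu'}$, and minimality of $\nu$ (or the fact that applying $\fm_\psi^+$ enough times would reach $M_{\psi,\lambda}^{\bar\lambda}$ and hence, by simplicity of $W_{\psi,\chi_\lambda}$ and $N\cap M_{\psi,\lambda}^{\bar\lambda}=0$, give a contradiction unless $\fm_\psi^+\hat v_0=0$) yields $\fm_\psi^+\hat v_0=0$, as required.

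The cleanest way to organize the converse is in fact to invoke \cite[Lemma 2.12]{mcdowell} directly, as the statement advertises: McDowell's lemma gives exactly the existence of such a highest-weight-type maximal vector inside any proper submodule of a standard Whittaker module for a semisimple Lie algebra, and the construction of $M_{\psi,\lambda}$ over the reductive $\g_0$ differs from the semisimple case only by the central factor $\mathfrak z$, on which everything acts by the fixed scalar $\bar\chi_\lambda$; so the translation to $\g_0$, $\fs_\psi$, $\fm_\psi^\pm$ is routine. Then one only needs to observe that the ``maximal vector'' condition $\fm_\psi^+\hat v_0=0$ there is the same as the one in the statement, and that a maximal vector forces $\nu>0$ (else $\hat v_0\in M_{\psi,\lambda}^{\bar\lambda}=W_{\psi,\chi_\lambda}$, which contains no $\fm_\psi^+$-invariant up to the Whittaker vector — and the Whittaker vector generates all of $M_{\psi,\lambda}$, not a proper submodule).

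The main obstacle I anticipate is not conceptual but bookkeeping: making sure the partial order on $Z^*$ and the claim that $\fm_\psi^-$ strictly increases, and $\fm_\psi^+$ strictly decreases, the $Z$-weight are correctly set up over the \emph{reductive} algebra $\g_0$ — this is where Lemma~\ref{easy lemma-Section 4} and Lemma~\ref{lem:basis of Z^*} are doing the real work, guaranteeing that the roots in $\Delta_{\0}^+\setminus\Delta^{S_\psi}_+$ have strictly positive image in $Z^*$ and hence that $M_{\psi,\lambda}^{\bar\lambda}$ is genuinely the unique top weight space. Once that is in hand, the argument is a direct transcription of McDowell's, and I would simply cite \cite[Lemma 2.12]{mcdowell} and indicate the (trivial) modification for reductive $\g_0$, exactly as the text does.
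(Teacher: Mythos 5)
Your proposal is correct and follows the same route as the paper, which simply deduces the lemma from \cite[Lemma 2.12]{mcdowell} with the routine adjustment from the semisimple to the reductive case; you have in addition supplied the weight-space argument that the citation leaves implicit. The only point worth tightening is the choice of a ``minimal'' weight occurring in a proper submodule: the order on $Z^{*}$ is only partial, but minimal elements exist because the relevant weights lie in $\bigoplus_{i}\Z_{\geq0}\bar\beta_{i}$ and the function $|\cdot|$ from Lemma \ref{lem:Z acts semisimply}(b) strictly decreases along any descending chain, after which your argument that $\fm_{\psi}^{+}$ must annihilate a minimal-weight vector goes through.
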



\begin{prop}\label{prop: the induced modules are not in general simple}
Suppose that  $M_{\psi,\lambda}$ is not simple. Then $\widetilde{M}_{\psi,\lambda}$ is not simple.
\end{prop}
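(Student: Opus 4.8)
The plan is to promote a witness of the reducibility of the $\g_{0}$-module $M_{\psi,\lambda}$ to a maximal vector of $\widetilde{M}_{\psi,\lambda}$, using crucially that $\widetilde{M}_{\psi,\lambda}$ is induced from a $\fp_{\psi}^{+}$-module on which $\g_{1}$ acts by $0$. Since $M_{\psi,\lambda}$ is not simple, Lemma~\ref{lem:M_psi,lambda not simple criterion} produces $\nu>0$ in $Z^{*}$ and a nonzero $\hat v_{0}\in M_{\psi,\lambda}^{\bar\lambda-\nu}$ with $\fm_{\psi}^{+}\hat v_{0}=0$. First I would set up the natural map
$$\iota\colon M_{\psi,\lambda}=U(\g_{0})\otimes_{U(\fm_{\psi}^{+})}W_{\psi,\chi_{\lambda}}\longrightarrow U(\g)\otimes_{U(\fp_{\psi}^{+})}W_{\psi,\chi_{\lambda}}=\widetilde{M}_{\psi,\lambda},\qquad u\otimes w\mapsto u\otimes w,$$
which is well defined since $\fm_{\psi}^{+}\subseteq\fp_{\psi}^{+}$, is $\g_{0}$-equivariant, and is injective by comparing the identifications $\widetilde{M}_{\psi,\lambda}\cong U(\fr_{\psi}^{-})\otimes_{\cc}W_{\psi,\chi_{\lambda}}$ and $M_{\psi,\lambda}\cong U(\fm_{\psi}^{-})\otimes_{\cc}W_{\psi,\chi_{\lambda}}$ of (\ref{eqn:basic property of induced for g}), (\ref{eqn: basic property of induced for g_0}) under the inclusion $\fm_{\psi}^{-}\subseteq\fr_{\psi}^{-}$. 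Because $Z\subseteq\h\subseteq\g_{0}$, the vector $v:=\iota(\hat v_{0})$ is a nonzero element of $\widetilde{M}_{\psi,\lambda}^{\bar\lambda-\nu}$ satisfying $\fm_{\psi}^{+}v=\iota(\fm_{\psi}^{+}\hat v_{0})=0$.

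The key point is that $\g_{1}$ also kills $v$. For this I would equip $\widetilde{M}_{\psi,\lambda}=U(\g)\otimes_{U(\fp_{\psi}^{+})}W_{\psi,\chi_{\lambda}}$ with the $\Z$-grading inherited from $\g=\g_{-1}\oplus\g_{0}\oplus\g_{1}$, placing $W_{\psi,\chi_{\lambda}}$ in degree $0$; this is consistent precisely because $\g_{1}\subseteq\fr_{\psi}^{+}$ acts by $0$ on $W_{\psi,\chi_{\lambda}}$, so the positive-degree part of $U(\fp_{\psi}^{+})$ annihilates $W_{\psi,\chi_{\lambda}}$. Using $U(\g)=U(\fr_{\psi}^{-})U(\fp_{\psi}^{+})$ one checks that $\widetilde{M}_{\psi,\lambda}$ is concentrated in degrees $\le 0$, while $\iota(M_{\psi,\lambda})$ sits in degree $0$ (it is spanned by elements $u\otimes w$ with $u\in U(\g_{0})$). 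Since $\g_{1}$ raises $\Z$-degree by $1$, it follows that $\g_{1}\cdot v\subseteq\g_{1}\cdot\iota(M_{\psi,\lambda})\subseteq(\widetilde{M}_{\psi,\lambda})_{1}=0$. Hence $\fr_{\psi}^{+}=\fm_{\psi}^{+}\oplus\g_{1}$ annihilates the nonzero vector $v\in\widetilde{M}_{\psi,\lambda}^{\bar\lambda-\nu}$ with $\nu>0$, i.e.\ $v$ is a maximal vector. By Corollary~\ref{cor:proper submodules-maximal vectors} — or directly, since $U(\g)v=U(\fr_{\psi}^{-})U(\g_{\psi})v\subseteq\bigoplus_{\mu>0}\widetilde{M}_{\psi,\lambda}^{\bar\lambda-\mu}$ is a proper $\g$-submodule by Proposition~\ref{prop:proper submodules of the induced modules} — this forces $\widetilde{M}_{\psi,\lambda}$ to be reducible.

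I expect the only real content to be the observation that $\g_{1}$ annihilates the image of $M_{\psi,\lambda}$ inside $\widetilde{M}_{\psi,\lambda}$; the rest is bookkeeping with the decompositions already established. A way to avoid introducing the grading explicitly is to argue by hand: for $x\in\g_{1}$ and $u\in U(\g_{0})$ the product $xu$ has $\Z$-degree $1$, so in a PBW expansion of $xu$ along $\g=\fr_{\psi}^{-}\oplus\g_{\psi}\oplus\fr_{\psi}^{+}$ with the $\g_{1}$-generators ordered last, every monomial must contain a rightmost factor from $\g_{1}$, which annihilates $W_{\psi,\chi_{\lambda}}$; hence $x\cdot(u\otimes w)=(xu)\otimes w=0$ for all $w\in W_{\psi,\chi_{\lambda}}$, giving $\g_{1}\cdot\iota(M_{\psi,\lambda})=0$. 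Either route makes $\iota(\hat v_{0})$ the desired maximal vector and completes the proof.
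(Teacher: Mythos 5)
Your proposal is correct and follows essentially the same route as the paper: lift the maximal vector $\hat v_0$ supplied by Lemma~\ref{lem:M_psi,lambda not simple criterion} into $\widetilde{M}_{\psi,\lambda}$, check that $\g_1$ (and hence all of $\fr_{\psi}^{+}=\fm_{\psi}^{+}\oplus\g_1$) annihilates it, and conclude via Corollary~\ref{cor:proper submodules-maximal vectors}. The paper's verification that $\g_1\hat v=0$ is precisely your ``by hand'' commutation argument using $[\g_1,\fm_{\psi}^{-}]\subseteq\g_1$ and $\g_1W_{\psi,\chi_{\lambda}}=0$; your $\Z$-grading formulation is just a tidier packaging of that same step.
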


\begin{proof}
Since $M_{\psi,\lambda}$ is not simple, by Lemma \ref{lem:M_psi,lambda not simple criterion} there exists $\displaystyle\hat {v}_{0}\in  M_{\psi,\lambda}^{\bar{\lambda}-\nu},$ $\nu>0$ such that $\fm_{\psi}^{+}\hat {v}_{0}=0.$ By (\ref{eqn:isomorphism as g_0 modules between the two constructions}), we may identify $\hat{v}_{0}$ with $\hat{v}=1\otimes \hat{v}_{0}$ in $\widetilde{M}_{\psi,\lambda}^{\bar{\lambda}-\nu}$. Clearly \begin{equation}\label{eqn:action of m_psi on hatv} \fm_{\psi}^{+}\hat{v}=0.\end{equation}Since $[\g_{1},\fm_{\psi}^{-}]\subseteq\g_{1}$, $\g_{1}W_{\psi,\chi_{\lambda}}=0$ by definition, and $U(\fm_{\psi}^{-})$ is generated by $\fm_{\psi}^{-}$, it is easy to see that  $\g_{1}\hat{v}=0$. But $\fr_{\psi}^{+}=\fm_{\psi}^{+}\oplus\g_1$. Hence by  (\ref{eqn:action of m_psi on hatv}), we have that  $\fr_{\psi}^{+} \hat{v}=0.$  Therefore by  Corollary \ref{cor:proper submodules-maximal vectors}, $\widetilde{M}_{\psi,\lambda}$ is not simple.
\end{proof}

In \cite{mcdowell1} it was shown that if $\mathfrak{s}$ is a finite-dimensional  semisimple Lie algebra over $\cc$, then the induced $\mathfrak{s}$-modules $M_{\psi,\lambda}$ can have proper submodules.  In particular, let   $\mathfrak{s}=\mathfrak{sl}_{3}$ and following \cite{mcdowell1} let $\psi$ and $\chi_{\lambda}$  be chosen so that $M_{\psi,\lambda}$ is not simple as an $\mathfrak{sl}_3$-module. We can trivially extend the action of $\mathfrak{sl}_{3}$ to an action of $\gl_{3}$ by letting the center of this reductive Lie algebra act by zero on $M_{\psi,\lambda}$. Clearly, $M_{\psi,\lambda}$ will have proper submodules as a $\mathfrak{gl}_{3}$-module.   This implies by Proposition \ref{prop: the induced modules are not in general simple} that in the case  $\g=\mathfrak{sl}(1,3)$ the $\g$-modules $\widetilde{M}_{\psi,\lambda}$ are not simple in general.

\bigskip

\subsection{Strongly typical simple Whittaker modules}\label{subsection:Strongly typical simple Whittaker modules}
For a fixed $\psi$, simple Whittaker modules for Lie algebras are in correspondence with central characters of $\g$.  In order to carry this link over to Lie superalgebras through the induced modules $\widetilde{M}_{\psi,\lambda}$, we restrict to strongly typical characters.

Let $Z(\g_0)$ be the center of $U(\g_0)$, let $\chi$ be a character of $Z(\g_{0})$ (that is, an algebra homomorphism $\chi: Z(\g_0) \rightarrow \cc$), and let $\psi\in\lm,$ $\psi\neq0.$ Recall that we denote again by $\psi$ the restriction of $\psi$ to a Lie  algebra homomorphism $\n_{0}^{+}\to\cc$. Following \cite[cf. Definition 1.5]{mcdowell}, let $K(\chi,\psi)$  be the category whose objects are the $\g_{0}$-modules $M$ such that (i) $M$ is finitely generated, (ii) for each $v\in M$ there exists a nonnegative integer $n$ such that  $(\ker \psi)^{n}v=0$, and (iii) $M$ has central character $\chi$. 

Let $\Omega:Z(\g_{\psi})\to\cc$ be a  character of $Z(\g_{\psi}).$  A vector $v$, in a $g_{0}$-module $M$, is
said to be a $(\Omega, \psi)$-vector if $xv = \psi(x) v$ for all $x \in \n_{0}^{+}$ and $zv = \Omega(z)v$ for all
$z\in Z(\g_{\psi})$.  

\begin{thm}\cite[Theorem 2.7]{mcdowell}\label{thm:mcdowell}
Let $\chi$ be a character of $Z(\g_{0})$, let $\psi\in\lm,$ $\psi\neq0$ and let $M$ be a nonzero module in $K(\chi,\psi).$ Then there exists a character $\Omega$ of $Z(\g_{\psi})$ such that $M$ contains a nonzero $(\Omega,\psi)$-vector.
\end{thm}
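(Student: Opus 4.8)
The plan is to produce the required vector as a $Z(\g_\psi)$-eigenvector sitting inside the space of $\psi$-Whittaker vectors of $M$, following the proof of \cite[Theorem~2.7]{mcdowell}; the only point where $\g_0$ being merely reductive (rather than semisimple, as in \cite{mcdowell}) enters is harmless, since the center $Z$ of $\g_\psi$ lies inside $Z(\g_\psi)$ and so is automatically accounted for. First I would record the structural facts. By Lemma~\ref{lem:rideal} (applied inside $\g_0$, where $\fp_\psi^{0}=\g_\psi\oplus\fm_\psi^{+}$ is parabolic with nilradical $\fm_\psi^{+}$), $\fm_\psi^{+}$ is an ideal of $\fp_\psi^{0}$; and $\psi$ vanishes on $\fm_\psi^{+}$, because $\psi$ factors through $\n_0^{+}/[\n_0^{+},\n_0^{+}]$ and vanishes on every root space $\g_\alpha$ with $\alpha\notin S_\psi$, while every root of $\fm_\psi^{+}$ is either non-simple or a simple root outside $S_\psi$. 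Hence $\fm_\psi^{+}\subseteq\ker\psi$, so hypothesis (ii) forces $U(\fm_\psi^{+})v$ to be finite-dimensional for every $v\in M$, with each element of $\fm_\psi^{+}$ acting nilpotently on it; Engel's theorem then gives a nonzero vector in $M^{\fm_\psi^{+}}:=\{v\in M\mid\fm_\psi^{+}v=0\}$, and the ideal property shows $M^{\fm_\psi^{+}}$ is stable under $\g_\psi$, hence under $U(\g_\psi)$ and in particular under $Z(\g_\psi)$.

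Next I would work inside the $\g_\psi$-module $M^{\fm_\psi^{+}}$. Write $\n_\psi=\bigoplus_{\alpha\in\Delta^{S_\psi}_{+}}\g_\alpha$, so that $\n_0^{+}=\n_\psi\oplus\fm_\psi^{+}$ and $\psi|_{\n_\psi}$ is non-singular for $\g_\psi$. Since $\ker(\psi|_{\n_\psi})=\ker\psi\cap U(\n_\psi)$, hypothesis (ii) again shows $U(\n_\psi)v$ is finite-dimensional for each $v$, so for a nonzero $v\in M^{\fm_\psi^{+}}$ the finite-dimensional $\n_\psi$-module $U(\n_\psi)v\subseteq M^{\fm_\psi^{+}}$ contains a nonzero vector $w$ with $xw=\psi(x)w$ for all $x\in\n_\psi$. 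As $\fm_\psi^{+}w=0$ and $\psi|_{\fm_\psi^{+}}=0$, the vector $w$ is a $\psi$-Whittaker vector for all of $\n_0^{+}$; thus $\Wh_\psi(M)=\{v\in M\mid xv=\psi(x)v\ \text{for all}\ x\in\n_0^{+}\}$ is nonzero. Because $Z(\g_\psi)$ commutes with $\g_\psi\supseteq\n_\psi$ and preserves $M^{\fm_\psi^{+}}$, it acts on $\Wh_\psi(M)$.

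The main obstacle is the finiteness needed to extract an eigenvector: I would show that $\Wh_\psi(M)$ is finite-dimensional. This is where the hypotheses defining $K(\chi,\psi)$ — finite generation and the fixed central character $\chi$ — are genuinely used, and it is the technical core of \cite[Theorem~2.7]{mcdowell}, resting ultimately on Kostant's finite-dimensionality theorem \cite{Ko1978} for non-singular Whittaker vectors: one reduces to the $\g_\psi$-module $M^{\fm_\psi^{+}}$, shows that $Z(\g_\psi)$ acts locally finitely on it (via the comparison of Harish-Chandra homomorphisms for the pair $\g_\psi\subseteq\g_0$ together with the finite length of the standard modules $M_{\psi,\mu}$), and applies Kostant's bound on each generalized $Z(\g_\psi)$-eigenspace. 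Granting this, $\Wh_\psi(M)$ is a nonzero finite-dimensional module over the commutative algebra $Z(\g_\psi)$, hence contains a common eigenvector $v_0$; if $\Omega$ denotes the corresponding character of $Z(\g_\psi)$, then $xv_0=\psi(x)v_0$ for all $x\in\n_0^{+}$ and $zv_0=\Omega(z)v_0$ for all $z\in Z(\g_\psi)$, so $v_0$ is the desired nonzero $(\Omega,\psi)$-vector.
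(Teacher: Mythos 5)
You should first be aware that the paper does not prove this statement at all: it is imported verbatim as \cite[Theorem 2.7]{mcdowell}, with only the remark (in \S\ref{section:construction of the induced modules}) that McDowell's constructions for semisimple Lie algebras extend to the reductive $\g_0$. So there is no in-paper proof to match against; what you have written is a reconstruction of McDowell's argument, and its overall architecture is the right one. Your first two steps are correct and essentially complete: $\psi$ vanishes on $\fm_{\psi}^{+}$, condition (ii) in the definition of $K(\chi,\psi)$ makes $\ker\psi$ act locally nilpotently, Engel's theorem produces a nonzero vector in $M^{\fm_{\psi}^{+}}$, this space is $\g_{\psi}$-stable because $\fm_{\psi}^{+}$ is the nilradical of the parabolic $\fp_{\psi}^{0}$, and the same local nilpotency applied to the shifted $\n_{\psi}$-action yields a nonzero vector in $\Wh_{\psi}^{0}(M)$, which is a $Z(\g_{\psi})$-stable subspace. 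Your observation that the passage from semisimple to reductive $\g_0$ is harmless is also consistent with how the paper treats it.

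The genuine gap is the step you yourself label the technical core and then dispatch with ``Granting this'': the local finiteness of the $Z(\g_{\psi})$-action on $\Wh_{\psi}^{0}(M)$ (full finite-dimensionality of that space is more than you need --- a single finite-dimensional subspace $Z(\g_{\psi})w$ already yields a common eigenvector). Without this step your argument only produces a nonzero $\psi$-Whittaker vector, not an $(\Omega,\psi)$-vector, and this is precisely where the hypotheses of finite generation and of the fixed central character $\chi$ of $Z(\g_{0})$ must be spent. The correct mechanism is that $Z(\g_{\psi})$ is a finite module over the image of $Z(\g_{0})$ under the relative Harish-Chandra projection (ultimately because $S(\h)^{W_{\psi}}$ is finite over $S(\h)^{W}$), so that $Z(\g_{0})$ acting by the scalar character $\chi$ forces $Z(\g_{\psi})w$ to be finite-dimensional. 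Your parenthetical appeal to ``the finite length of the standard modules $M_{\psi,\mu}$'' is the one misdirected pointer: finite length of those modules is a consequence of this theorem in McDowell's development (and is quoted from Mili\v{c}i\'c--Soergel in the present paper), so invoking it here would be circular. As a flagged outline of the cited proof your proposal is faithful; as a self-contained proof it is missing its decisive step.
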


As the following version of Schur's Lemma states (cf. \cite[Lemma 2.1.4]{Gorelik2000}), the center $Z(\g)$ of a Lie superalgebra acts by a central character on simple modules.  

\begin{lem}\label{lem:Schur's Lemma}
Let $\g$ be finite or countable dimensional Lie superalgebra, and let $M$  be a simple $\g$-module. Then there exists a  character $\widetilde\chi$ of $Z(\g)$ such that $zv=\widetilde\chi(z)v$ for all $z\in Z(\g)$ and $v\in M.$
 \end{lem}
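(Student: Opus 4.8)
The plan is to prove the Schur-type Lemma \ref{lem:Schur's Lemma} by reducing to the usual countable-dimensional Schur's Lemma for the associative superalgebra $\operatorname{End}_{\g}(M)$, and then observing that $Z(\g)$ acts on $M$ through this endomorphism superalgebra by \emph{even} operators. First I would note that since $M$ is a simple $\g$-module with at most a countable basis (we restrict throughout to modules with at most a countable basis), the $\cc$-superalgebra $D:=\operatorname{End}_{\g}(M)$ is a division superalgebra: any nonzero $\g$-module endomorphism is a bijection because its kernel and image are $\g$-submodules of the simple module $M$. The standard dimension-counting argument (a countable-dimensional division algebra over the uncountable algebraically closed field $\cc$ has no transcendental elements, hence equals $\cc$; see the argument behind \cite[Lemma 2.1.4]{Gorelik2000}) shows that the even part $D_{\overline 0}$ is exactly $\cc\,\mathrm{id}_M$.

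Next I would check that every $z\in Z(\g)$ acts on $M$ as an element of $D_{\overline 0}=\cc\,\mathrm{id}_M$. For this, recall $Z(\g)$ is the center of the \emph{associative} superalgebra $U(\g)$; for $z$ homogeneous in $Z(\g)$, the action map $\rho(z):M\to M$ commutes (in the graded sense) with the action of all of $U(\g)$, in particular with $U(\g_{\overline 0})$, but since $z$ is central in the full associative superalgebra the sign $(-1)^{d(z)d(u)}$ is forced to be $+1$ on homogeneous $u$ for which $\rho(u)\neq 0$; more cleanly, one argues that any homogeneous central element of $U(\g)$ of odd degree must act as zero on any simple module (its square is an even central element acting as a scalar $c$; if $c\neq0$ the odd operator is invertible and its commutation relations with the even subalgebra force a contradiction with simplicity, while if $c=0$ a short argument with the kernel being a submodule gives that it acts as $0$). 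Hence $Z(\g)$ acts through even operators, so through scalars, giving the desired character $\widetilde\chi: Z(\g)\to\cc$ with $\rho(z)=\widetilde\chi(z)\,\mathrm{id}_M$; that $\widetilde\chi$ is an algebra homomorphism is immediate since $\rho$ is.

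The one genuinely delicate point — and what I expect to be the main obstacle — is the behavior of the \emph{odd} part of $Z(\g)$ and, relatedly, the countability hypothesis needed to run the "$D_{\overline 0}=\cc$'' step. In the purely even (Lie algebra) case one uses Dixmier's lemma: a countable-dimensional division algebra over $\cc$ is $\cc$. Here one must be careful that $D$ need not be commutative a priori, so one applies the division-algebra argument to the commutative subfield $\cc(\rho(z))$ generated by a single central element, concluding $\rho(z)$ is algebraic over $\cc$, hence (by algebraic closure, together with the fact that $\rho(z)$ cannot have more than one eigenvalue on the simple module $M$) a scalar. For odd $z$ one first passes to $z^2\in Z(\g)_{\overline 0}$, handled by the even case, and then deals with $z$ itself as sketched above. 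Once these pieces are in place, the statement follows; I would present the argument in the order: (1) nonzero $\g$-endomorphisms are invertible; (2) a single central element is algebraic over $\cc$ and has a single eigenvalue, hence is scalar (even case); (3) odd central elements reduce to the even case via squaring and act as $0$; (4) assemble $\widetilde\chi$ and verify it is a character.
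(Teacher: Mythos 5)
The paper does not actually prove this lemma --- it is quoted directly from \cite[Lemma 2.1.4]{Gorelik2000} with no proof environment --- so there is nothing internal to compare against. Your argument is essentially the standard super Dixmier--Schur proof that underlies Gorelik's statement, and its skeleton is sound: $D=\operatorname{End}_{\g}(M)$ is a division superalgebra; $D$ embeds into $M$ via $\varphi\mapsto\varphi(v_0)$ for a fixed $0\neq v_0$ (you should make this explicit, since it is what bounds $\dim D$ by the countable $\dim M$ --- the full endomorphism algebra of a countably infinite-dimensional space is uncountable-dimensional); the Dixmier argument then gives $D_{\overline{0}}=\cc\,\mathrm{id}_M$; and even central elements act through $D_{\overline{0}}$.

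The genuine gap is in your treatment of odd central elements, and it stems from not fixing which ``center'' is meant. The sentence claiming the sign $(-1)^{d(z)d(u)}$ ``is forced to be $+1$'' does not parse as an argument. More seriously, in your $c\neq 0$ branch you assert that an invertible odd operator commuting with the action contradicts simplicity, but it does not: for the superalgebra $\cc[x]/(x^2-1)$ with $x$ odd, the module $\cc^{1|1}$ with $x$ acting by $\left(\begin{smallmatrix}0&1\\1&0\end{smallmatrix}\right)$ is simple and $x$ is central in the ordinary sense, so with that reading of $Z(\g)$ the lemma itself would be false. The fix is to observe that the paper's $Z(\g)$ is the set of invariants for the adjoint action (\ref{def:adjoint action on U(g)}), i.e.\ the supercenter: a homogeneous odd $z$ there satisfies $zz=-zz$, hence $z^2=0$, so your $c\neq0$ branch is vacuous. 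The $c=0$ case then closes cleanly: $\rho(z)\in D_{\overline{1}}$ (it supercommutes with the action, so its kernel is a submodule and it lies in $D$), and a square-zero element of a division superalgebra is zero, so $\rho(z)=0$ and $\widetilde\chi(z)=0$. (For the basic classical $\g$ used in the rest of the paper, $Z(\g)$ is purely even, so the issue is moot in applications, but the lemma is stated for general countable-dimensional $\g$ and your write-up should handle the odd part correctly.)
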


Let $T \in U(\g)$ be a special ghost element constructed in \cite{Gorelik2000}. Following \cite{Gorelik2_2002}, a  character $\widetilde\chi: Z(\g)\to\cc$ is {\it strongly typical} if $T^{2}\not\in \ker\widetilde{\chi}.$ We call a simple Whittaker $\g$-module $V$ {\it strongly typical} if  $V$ has a strongly typical central character $\widetilde{\chi}.$ 

Analogous to (\ref{defn:space of Whittaker vectors for psi}),  define
$$
\Wh_{\psi}^{0}(V)=\{v\in V\mid xv=\psi (x) v \textrm{ for all } x\in\n_{0}^+ \}$$
for any $\g$-module $V$. Note that $\Wh_{\psi}(V)\subseteq \Wh_{\psi}^{0}(V)$.

\begin{thm}\label{thm: irreducible Whittaker homomorphic image of induced}
Let $\psi\in\lm$, $\psi\neq0$ and let $V$ be a strongly typical simple Whittaker $\g$-module of type $\psi$. Then there exists $\lambda\in \h^{*}$ such that $V$ is a homomorphic image of $\widetilde{M}_{\psi,\lambda}$ as $\g$-modules.
\end{thm}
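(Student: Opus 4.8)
The plan is to reduce everything to the $\g_0$-side, where Theorem \ref{thm:mcdowell} (McDowell's result) is available, and then promote the $(\Omega,\psi)$-vector it produces to a Whittaker generator of $\widetilde M_{\psi,\lambda}$ for a suitable $\lambda$. First, since $V$ is a simple $\g$-module of countable dimension, Lemma \ref{lem:Schur's Lemma} gives a central character $\widetilde\chi$ of $Z(\g)$ acting on $V$; by hypothesis $\widetilde\chi$ is strongly typical. Fix a cyclic Whittaker vector $w\in\Wh_\psi(V)$. The key first step is to understand $V$ as a $\g_0$-module: I would restrict attention to the $U(\g_0)$-submodule $M:=U(\g_0)w\subseteq V$. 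Because $\n_0^+\subseteq\n^+$ acts on $w$ by $\psi$, and because $U(\g)=\bigwedge(\g_{-1})U(\g_0)\bigwedge(\g_1)$ with $\g_1 w=0$, one sees $V=\bigwedge(\g_{-1})M$ is finitely generated over $\g_0$; more to the point, I want $M$ itself (or a suitable $\g_0$-subquotient through which $w$ still generates) to lie in some category $K(\chi,\psi)$. Conditions (i) and (ii) in the definition of $K(\chi,\psi)$ are immediate: $M$ is cyclic, and $(\ker\psi)^n w=0$ for $n=1$, with local finiteness of $\ker\psi$ on all of $M=U(\g_0)w$ following from the local finiteness of the adjoint action of $\n_0^+$ on $U(\g_0)$ (exactly the argument used in Lemma \ref{lem;n0nilpotent}).

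The subtle point — and what I expect to be the main obstacle — is condition (iii): producing a $Z(\g_0)$-central character on a $\g_0$-submodule of $V$. This is precisely where strong typicality enters. The strongly typical central character $\widetilde\chi$ of $Z(\g)$, via Gorelik's machinery (the ghost element $T$ and the resulting \emph{anti-center} / the isomorphism between a localization of $Z(\g)$ and a localization of $Z(\g_0)^{W\text{-part}}$), forces $Z(\g_0)$ to act on $V$ in a controlled, locally finite way; strong typicality guarantees that this action is in fact semisimple with a \emph{single} generalized eigenvalue, i.e. $Z(\g_0)$ acts by a character $\chi$ on the relevant $\g_0$-constituent. Concretely, I would argue that on $\Wh_\psi^0(V)$ — which is nonzero since it contains $w$ — the commuting algebra $Z(\g_0)$ acts, and pick a generalized $Z(\g_0)$-eigenvector $w'\in\Wh_\psi^0(V)$; set $\chi$ to be its eigenvalue character. (Strong typicality is what ensures $Z(\g_0)$ acts on a suitable finitely generated piece semisimply, or at least that we may pass to a genuine eigenvector generating enough of $V$; this is the one place I'd need to cite \cite{Gorelik2000, Gorelik2_2002} carefully.) Then $M':=U(\g_0)w'\in K(\chi,\psi)$.

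Now apply Theorem \ref{thm:mcdowell} to $M'$: there is a character $\Omega$ of $Z(\g_\psi)$ and a nonzero $(\Omega,\psi)$-vector $v_0\in M'$. By surjectivity of $\widetilde\theta_\psi:\h^*\to\operatorname{Max}Z(\g_\psi)$, choose $\lambda\in\h^*$ with $\chi_\lambda=\Omega$. The vector $v_0$ satisfies $x v_0=\psi(x)v_0$ for $x\in\n_0^+$ and $z v_0=\chi_\lambda(z)v_0$ for $z\in Z(\g_\psi)$; I then claim $U(\g_{\psi})v_0$ is a simple Whittaker $\g_\psi$-module of type $\psi$ with central character $\chi_\lambda$ — simplicity because any nonzero submodule would, by Lemma \ref{lem: any module in Whittaker cat contains Whittaker vector} applied to $\n_\psi$, contain a $(\chi_{\mathfrak s},\psi)$-vector, and Kostant's theorem forces it to be all of $U(\g_\psi)v_0$ (up to the uniqueness in Proposition \ref{prop:unique simple Whittaker for g_psi}). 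Hence $U(\g_\psi)v_0\cong W_{\psi,\chi_\lambda}$. Extending the $\g_\psi$-action to $\fp_\psi^+$ by letting $\fr_\psi^+$ act by $0$ — which is consistent here because $\g_1 v_0=0$ (same argument as in the proof of Proposition \ref{prop: the induced modules are not in general simple}, using $\g_1 w=0$ and $U(\fm_\psi^-)$-generation) and $\fm_\psi^+$ kills $v_0$ by the $(\Omega,\psi)$-condition together with Theorem \ref{thm:mcdowell}'s construction — the universal property of induction yields a $\g$-module map $\widetilde M_{\psi,\lambda}=U(\g)\otimes_{U(\fp_\psi^+)}W_{\psi,\chi_\lambda}\to V$ sending $\tilde w_{\psi,\lambda}\mapsto v_0$. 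Its image is $U(\g)v_0$, a nonzero submodule of the simple module $V$, hence all of $V$. Therefore $V$ is a homomorphic image of $\widetilde M_{\psi,\lambda}$, completing the proof. The only delicate verification, as noted, is extracting the genuine $Z(\g_0)$-eigenvector and the membership $M'\in K(\chi,\psi)$, which rests essentially on strong typicality; everything else is the expected induction/universal-property bookkeeping.
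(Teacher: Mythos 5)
Your overall strategy is the paper's: restrict to a cyclic $U(\g_0)$-submodule, place it in McDowell's category $K(\chi,\psi)$, extract a $(\chi_\lambda,\psi)$-vector, and induce back up via the universal property. You have also correctly located the crux --- producing a genuine $Z(\g_0)$-eigenvector inside $V$ --- but that is precisely the step you do not carry out, and deferring it to unspecified ``Gorelik machinery'' leaves a genuine gap. A priori $\Wh_{\psi}^{0}(V)$ may be infinite-dimensional, and a polynomial algebra acting on an infinite-dimensional space need not have any eigenvector, generalized or otherwise (compare $\cc[x]$ acting on itself by multiplication), so the existence of the vector $w'$ you ``pick'' is exactly what must be proved. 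The mechanism the paper uses is Musson's theorem (\cite{Musson1997}, Thm.~2.5, as corrected in \cite{Gorelik1_2002}): each generator $u$ of $Z(\g_0)$ satisfies $\sum_{i=0}^{l}u^{i}z_{i}=0$ with $z_{i}\in Z(\g)$ and $z_{l}=T^{2}$. Applying this to the cyclic Whittaker vector $v$ and using $\widetilde\chi(T^{2})\neq 0$ converts it into a monic polynomial relation for $u$ acting on $v$, so $\mathrm{span}_{\cc}\{u^{k}v\}$, and hence $Z(\g_0)v$, is finite-dimensional; a common eigenvector $w\in Z(\g_0)v$ then exists by elementary linear algebra. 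This is the only point where strong typicality is used, and without it the argument does not start.

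Two smaller repairs. First, McDowell's condition (iii) demands a genuine central character on all of $M'$, so a generalized eigenvector is not enough; your parenthetical hedge (``or at least that we may pass to a genuine eigenvector'') flags the issue but does not resolve it --- the finite-dimensionality of $Z(\g_0)v$ is again what resolves it. Second, the eigenvector should be taken inside $Z(\g_0)v$, not at an arbitrary position in $\Wh_{\psi}^{0}(V)$: the later steps require $\g_{1}w'=0$ so that $\g_{1}$ annihilates $v_{0}\in U(\g_0)w'$ and the map out of $\widetilde{M}_{\psi,\lambda}$ is well defined, and this follows from $\g_{1}v=0$ and $[\g_{1},\g_{0}]\subseteq\g_{1}$ only when $w'\in U(\g_0)v$. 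With these points filled in, your argument coincides with the paper's proof.
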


\begin{proof}
Suppose that $v$ is a cyclic Whittaker vector of $V$. Since $V$ is  a strongly typical simple  $\g$-module,  $V$  has a strongly typical central character $\widetilde{\chi}.$  

Let $u\in Z(\g_0)$ be a generator of $Z(\g_0)$.  We claim that  $\textrm{span}_\cc\{u^{k}v:k\geq0\}$ is finite dimensional.  Theorem 2.5 in \cite{Musson1997} (see also Lemma 8.3.1 in \cite{Gorelik1_2002} for a correction of a misprint) implies that there exist $z_0,z_1,\ldots,z_l\in Z(\g)$ (where $l$ is the number of elements in the set of sums of distinct odd positive roots) such that $\displaystyle\sum_{i=0}^{l} u^{i}z_{i}=0$ and $z_{l}=T^{2}$. Therefore $u^{l}\widetilde{\chi}(z_{l})v+\ldots+u\widetilde{\chi}(z_1)v+\widetilde{\chi}(z_0)v=0$, where $\widetilde{\chi}(z_{l})=\widetilde{\chi}(T^{2})\neq0$ since $\widetilde{\chi}$ is a strongly typical central character. It follows that $\textrm{span}_\cc\{u^{k}v:k\geq0\}$ is finite dimensional. 

Since $Z(\g_0)$ is a finitely generated polynomial algebra,  this implies that $Z(\g_0)v$ is a finite dimensional vector space on which  $Z(\g_0)$ acts as a commuting set of endomorphisms. Therefore $Z(\g_0)v$ contains a common eigenvector $w$ for the action of $Z(\g_0)$. That is, there exists an algebra homomorphism $\chi:Z(\g_{0})\to\cc$ such that $zw=\chi(z)w$ for all $z\in Z(\g_0)$. Moreover, $xw=\psi(x)w$ for all $x\in\n^+_0$ since $w\in Z(\g_0)v$ and $v\in \Wh_{\psi}^{0}(V)$. Furthermore, as $[\g_{1},\g_{0}]\subseteq \g_{1}$, $U(\g_{0})$ is generated by $\g_{0}$, and $\g_1 v=0$, it follows that $\g_1w=0$. Hence $xw=\psi(x)w$ for all $x\in\n^+$. 

Let $M=U(\g_0)w.$ Clearly $M$ is in $K(\chi,\psi)$. By Theorem \ref{thm:mcdowell}, it follows that there exists $\lambda\in\h^{*}$ such that  $M$ contains a $(\chi_{\lambda},\psi)$-vector, say $\widetilde{w}.$ In particular, $\widetilde{w}\in {\rm Wh}_{\psi}^{0}(M)$, hence $\fm_{\psi}^{+}\widetilde{w}=0$ by definition. 
Since $[\g_{1},\g_{0}]\subseteq \g_{1}$, $U(\g_{0})$ is generated by $\g_{0}$, and $\g_{1}w=0$, it follows that $\g_1 \tilde{w}=0$.  
Because  $\fr_{\psi}^{+}=\fm_{\psi}^{+}\oplus\g_{1}$, we then have that $\fr_{\psi}^{+}\widetilde{w}=0.$ 

Let $W=U(\g_{\psi})\widetilde{w}.$ Then $W$ is a Whittaker $\g_{\psi}$-module of type $\psi$ and character $\chi_{\lambda}$, hence $W$ is simple and isomorphic to $W_{\psi,\chi_{\lambda}} $ by Proposition \ref{prop:unique simple Whittaker for g_psi}.  Further, the simplicity of $V$ implies that $V=U(\g)\widetilde{w}$. Hence, the map $\widetilde{M}_{\psi,\lambda}\to V,$ defined by $u\tilde{w}_{\psi,\lambda}\to u\widetilde{w}$ defines a surjective homomorphism of $U(\g)$-modules. 
\end{proof}

\begin{cor}Let $\psi\in\lm$, $\psi\neq0$ such that $\psi$ is singular if $\g=\fpsl(n,n)$. Then the $\{\widetilde{L}_{\psi,\lambda}\mid \lambda\in\h^{*}/W_{\psi}^{\cdot}\}$ describe the isomorphism classes of the strongly typical simple Whittaker $\g$-modules. 
\end{cor}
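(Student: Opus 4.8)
The plan is to combine the classification statement of Theorem~\ref{thm: irreducible Whittaker homomorphic image of induced} with the uniqueness results for the modules $\widetilde{M}_{\psi,\lambda}$ established earlier. First I would observe that by Proposition~\ref{prop:proper submodules of the induced modules}, each $\widetilde{M}_{\psi,\lambda}$ has a unique irreducible quotient $\widetilde{L}_{\psi,\lambda}$, and that $\widetilde{L}_{\psi,\lambda}$ is a Whittaker $\g$-module of type $\psi$ (it is generated by the image of $\tilde{w}_{\psi,\lambda}$, which is a Whittaker vector). So the $\widetilde{L}_{\psi,\lambda}$ are all simple Whittaker $\g$-modules of type $\psi$. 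To see they are strongly typical: since $\psi$ is assumed singular when $\g=\fpsl(n,n)$, we have $Z\neq0$, and one checks (using Proposition~\ref{prop:basic properties of the induced modules}(a) and Schur's Lemma~\ref{lem:Schur's Lemma}) that $\widetilde{L}_{\psi,\lambda}$ admits a central character $\widetilde{\chi}$ for $Z(\g)$; strong typicality then needs to be argued, most directly by noting that the construction of $\widetilde{M}_{\psi,\lambda}$ via the $\g_0$-module $M_{\psi,\lambda}$ — on which $Z(\g_0)$ acts by a genuine central character $\chi$ — forces $T^2$ to act nontrivially, since $T^2$ lies in the image of $Z(\g)$ inside $U(\g_0)$ up to the relation $\sum u^i z_i=0$ with $z_l = T^2$ and $\widetilde{\chi}(z_l)\neq 0$ whenever the $Z(\g_0)$-action is scalar.

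Next I would prove the list is exhaustive: given a strongly typical simple Whittaker $\g$-module $V$ of type $\psi$, Theorem~\ref{thm: irreducible Whittaker homomorphic image of induced} produces $\lambda\in\h^*$ and a surjection $\widetilde{M}_{\psi,\lambda}\twoheadrightarrow V$. Since $V$ is simple, it is a proper nonzero quotient, hence a quotient of $\widetilde{M}_{\psi,\lambda}/M$ where $M$ is the unique maximal submodule; but $\widetilde{M}_{\psi,\lambda}/M = \widetilde{L}_{\psi,\lambda}$ is already simple, so $V\cong\widetilde{L}_{\psi,\lambda}$. This shows every strongly typical simple Whittaker $\g$-module is isomorphic to some $\widetilde{L}_{\psi,\lambda}$.

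Finally I would pin down the parametrization. By the remark following Proposition~\ref{prop:proper submodules of the induced modules}, $\widetilde{L}_{\psi,\lambda}\cong\widetilde{L}_{\psi,\mu}$ if and only if $W_\psi\cdot\lambda = W_\psi\cdot\mu$, which is precisely the statement that distinct elements of $\h^*/W_\psi^{\cdot}$ give non-isomorphic modules. Combining the three points — the $\widetilde{L}_{\psi,\lambda}$ are strongly typical simple Whittaker modules of type $\psi$, every such module is one of them, and they are pairwise non-isomorphic exactly modulo the dot-action of $W_\psi$ — yields that $\{\widetilde{L}_{\psi,\lambda}\mid\lambda\in\h^*/W_\psi^{\cdot}\}$ is a complete irredundant list of the isomorphism classes.

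The main obstacle I anticipate is verifying carefully that the modules $\widetilde{L}_{\psi,\lambda}$ are themselves strongly typical — i.e. that their $Z(\g)$-central character $\widetilde\chi$ satisfies $T^2\notin\ker\widetilde\chi$. Theorem~\ref{thm: irreducible Whittaker homomorphic image of induced} only uses strong typicality as a hypothesis on $V$; here we need it as a conclusion about $\widetilde{L}_{\psi,\lambda}$, so one must trace through the relation $\sum_{i=0}^l u^i z_i = 0$ with $z_l = T^2$ from \cite{Musson1997}, using that $Z(\g_0)$ acts on the highest $Z$-weight space $W_{\psi,\chi_\lambda}$ of $\widetilde{M}_{\psi,\lambda}$ by the scalar character $\chi$ (equivalently $\chi_\lambda$ on $Z(\g_\psi)$), to deduce $\widetilde\chi(T^2)\neq 0$. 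This is where the assumption that $\psi$ is singular for $\fpsl(n,n)$ (hence $Z\neq 0$, so the earlier structure theory applies) is genuinely used.
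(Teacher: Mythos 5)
Your second and third paragraphs are exactly the paper's (implicit) argument: the corollary is stated without proof because it follows directly from Theorem \ref{thm: irreducible Whittaker homomorphic image of induced} (every strongly typical simple Whittaker module of type $\psi$ is a quotient of some $\widetilde{M}_{\psi,\lambda}$, hence equals its unique simple quotient $\widetilde{L}_{\psi,\lambda}$ by Proposition \ref{prop:proper submodules of the induced modules}) together with the remark that $\widetilde{L}_{\psi,\lambda}\cong\widetilde{L}_{\psi,\mu}$ if and only if $W_{\psi}\cdot\lambda=W_{\psi}\cdot\mu$. That part of your proposal is correct and complete.

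The problem is your first paragraph and the ``main obstacle'' you flag at the end. The claim that every $\widetilde{L}_{\psi,\lambda}$ is strongly typical is not what the corollary asserts (it asserts only that the strongly typical simples occur among the $\widetilde{L}_{\psi,\lambda}$, classified by $\h^{*}/W_{\psi}^{\cdot}$), and the claim is in fact false: whether the central character $\widetilde\chi$ of $\widetilde{L}_{\psi,\lambda}$ satisfies $T^{2}\notin\ker\widetilde\chi$ depends on $\lambda$, and for atypical $\lambda$ one has $\widetilde\chi(T^{2})=0$. Your proposed justification is also logically reversed. In the proof of Theorem \ref{thm: irreducible Whittaker homomorphic image of induced}, the relation $\sum_{i=0}^{l}u^{i}z_{i}=0$ with $z_{l}=T^{2}$ is used in the direction: \emph{given} $\widetilde\chi(T^{2})\neq0$, the generator $u$ of $Z(\g_0)$ satisfies a polynomial identity on $v$, so $Z(\g_0)v$ is finite dimensional. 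You cannot run this backwards: if $\widetilde\chi(T^{2})=0$ the relation becomes $\sum_{i<l}u^{i}\widetilde\chi(z_{i})v=0$, which may be the trivial identity $0=0$ and imposes no constraint, so a scalar (or locally finite) $Z(\g_0)$-action does not force $\widetilde\chi(T^{2})\neq0$. You should simply drop this step; the corollary needs no converse of the theorem's typicality hypothesis.
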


\section{The case $\psi$  is non-singular}\label{section:nonsingular}

  For this section, we fix  $0 \neq \psi\in\lm$  non-singular and  we describe the space ${\rm Wh}_{\psi}^{0}(\widetilde{M}_{\psi,\lambda})$. We use this description to  show that in the case $\g=\fsl(1,2)$,  the induced modules $\widetilde{M}_{\psi,\lambda}$ are  in fact simple for any $\psi\in\lm$ and $\lambda\in\h^{*}.$

Recall that $P\subset\h^*$ the set of weights of $\bigwedge\fg_{-1}$ as a $\g_0$-module (counted with multiplicities).

\begin{prop}\label{prop:dimension of space of psi_0 vectors in induced}
Assume that $0 \neq \psi \in \lm$ is non-singular. As a $\g_{0}$-module   $\M_{\psi,\lambda}$ has a composition series with length equal to $\rm {dim }\bigwedge \fg_{-1}$.  The composition factors are isomorphic to the irreducible Whittaker $\g_{0}$-modules $W_{\psi,\chi_{\lambda+\nu}}$, where $\nu\in P$. Furthermore the space of Whittaker vectors $\Wh_{\psi}^0(\widetilde{M}_{\psi,\lambda})$ is finite-dimensional and \begin{equation}\hbox {\rm dim } \Wh_{\psi}^{0}(\M_{\psi,\lambda})=\hbox{ \rm dim }\bigwedge \fg_{-1}.\end{equation}\end{prop}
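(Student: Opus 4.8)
The plan is to handle the composition-series claim first, then to bracket $\dim_{\cc}\Wh_{\psi}^{0}(\widetilde{M}_{\psi,\lambda})$ from above by left-exactness of the Whittaker-vectors functor and from below by an exactness (acyclicity) input from Kostant's theory.

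\textbf{The composition series.} Since $\psi$ is non-singular we have $\g_{\psi}=\g_{0}$ and $\fm_{\psi}^{\pm}=0$, so $M_{\psi,\mu}=W_{\psi,\chi_{\mu}}$ for every $\mu\in\h^{*}$, and this is a \emph{simple} $\g_{0}$-module by Proposition \ref{prop:unique simple Whittaker for g_psi}. By Proposition \ref{prop: filtration of standard Whittaker as g_0-module}, $\widetilde{M}_{\psi,\lambda}$ carries a $\g_{0}$-filtration whose subquotients are the modules $M_{\psi,\lambda+\nu}=W_{\psi,\chi_{\lambda+\nu}}$, $\nu\in P$. As these are simple and there are $\dim\bigwedge\g_{-1}$ of them, this filtration is automatically a composition series of $\widetilde{M}_{\psi,\lambda}$ as a $\g_{0}$-module, of the asserted length and with the asserted factors; in particular $\widetilde{M}_{\psi,\lambda}$ has finite length over $\g_{0}$.

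\textbf{The upper bound.} The functor $\Wh_{\psi}^{0}(-)=\Hom_{U(\n_{0}^{+})}(\cc_{\psi},-)$ depends only on the $\n_{0}^{+}$-module structure and is left exact. For non-singular $\psi$ one has $\n_{\psi}=\n_{0}^{+}$, so a Whittaker vector of the $\g_{\psi}=\g_{0}$-module $W_{\psi,\chi_{\mu}}$ is precisely a nonzero element of $\Wh_{\psi}^{0}(W_{\psi,\chi_{\mu}})$, and by Proposition \ref{prop:unique simple Whittaker for g_psi} that space is one-dimensional. Applying $\Wh_{\psi}^{0}$ to the composition series and using left exactness at each step gives $\dim_{\cc}\Wh_{\psi}^{0}(\widetilde{M}_{\psi,\lambda})\le\sum_{\nu\in P}\dim_{\cc}\Wh_{\psi}^{0}(W_{\psi,\chi_{\lambda+\nu}})=\dim\bigwedge\g_{-1}$, which already establishes finite-dimensionality.

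\textbf{The lower bound.} It remains to see that each short exact sequence of the composition series stays exact after applying $\Wh_{\psi}^{0}$. Since $\psi$ is non-singular, $\psi|_{\n_{0}^{+}}$ is non-degenerate in the sense of Kostant, and here I would invoke Kostant's theory \cite{Ko1978} (extended to the reductive $\g_{0}$ after splitting off the central character): the Whittaker-vectors functor is exact on the category of finitely generated, $Z(\g_{0})$-finite Whittaker $\g_{0}$-modules — for instance as a consequence of Kostant's equivalence of that category with the category of finite-dimensional $Z(\g_{0})$-modules, or concretely because Whittaker modules are $\n_{0}^{+}$-acyclic, so $\operatorname{Ext}^{1}_{U(\n_{0}^{+})}(\cc_{\psi},W_{\psi,\chi_{\mu}})=0$. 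One then checks that $\widetilde{M}_{\psi,\lambda}$ and every subfactor of the composition series lie in this category: each has finite length over $\g_{0}$ (hence is finitely generated and $Z(\g_{0})$-finite), and $\n_{0}^{+}$ acts on it via $\psi$ locally nilpotently because $\widetilde{M}_{\psi,\lambda}\in\W(\g,\n)$. Exactness then upgrades the inequalities above to equalities, giving $\dim_{\cc}\Wh_{\psi}^{0}(\widetilde{M}_{\psi,\lambda})=\sum_{\nu\in P}\dim_{\cc}\Wh_{\psi}^{0}(W_{\psi,\chi_{\lambda+\nu}})=\dim\bigwedge\g_{-1}$.

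\textbf{Main obstacle, and an alternative.} The substantive step is the last paragraph: pinning down the exact category on which the Whittaker functor is exact and verifying that $\widetilde{M}_{\psi,\lambda}$ and its subfactors belong to it. A route that uses only a single instance of acyclicity rather than a general exactness statement: use the $\g_{0}$-isomorphism $\widetilde{M}_{\psi,\lambda}\cong\bigwedge\g_{-1}\otimes_{\cc}W_{\psi,\chi_{\lambda}}$ of (\ref{eqn:isomorphism as g_0 modules between the two constructions}) (recall $M_{\psi,\lambda}=W_{\psi,\chi_{\lambda}}$ here), choose a complete flag $0=F_{0}\subset F_{1}\subset\cdots\subset F_{m}=\bigwedge\g_{-1}$ of $\n_{0}^{+}$-submodules for the adjoint action (possible since $\n_{0}^{+}$ is nilpotent), and observe that $(F_{i}\otimes W_{\psi,\chi_{\lambda}})/(F_{i-1}\otimes W_{\psi,\chi_{\lambda}})\cong W_{\psi,\chi_{\lambda}}$ as $\n_{0}^{+}$-modules; then only $\operatorname{Ext}^{1}_{U(\n_{0}^{+})}(\cc_{\psi},W_{\psi,\chi_{\lambda}})=0$ is needed, and the same count goes through.
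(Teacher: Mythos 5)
Your argument is correct, and it reaches the result by the same basic reduction as the paper --- everything flows from the $\g_{0}$-isomorphism $\M_{\psi,\lambda}\cong\bigwedge\g_{-1}\otimes_{\cc}W_{\psi,\chi_{\lambda}}$ of (\ref{eqn:isomorphism as g_0 modules between the two constructions}), using that non-singularity forces $\g_{\psi}=\g_{0}$ and $M_{\psi,\lambda}=W_{\psi,\chi_{\lambda}}$ --- but the two proofs differ in what they take as the black box. The paper's proof is a one-liner: having written $\M_{\psi,\lambda}$ as a finite-dimensional module tensored with an irreducible Whittaker module, it cites Kostant's Theorem 4.6, which simultaneously delivers the composition series of length $\dim\bigwedge\g_{-1}$, the identification of the factors, and the equality $\dim\Wh_{\psi}^{0}=\dim\bigwedge\g_{-1}$. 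You instead reassemble the relevant instance of that theorem from more primitive inputs: the composition-series claim from Proposition \ref{prop: filtration of standard Whittaker as g_0-module} together with the simplicity of each $W_{\psi,\chi_{\lambda+\nu}}$, the upper bound on $\dim\Wh_{\psi}^{0}$ from left-exactness, and the lower bound from exactness of the Whittaker-vectors functor (equivalently $\operatorname{Ext}^{1}_{U(\n_{0}^{+})}(\cc_{\psi},W_{\psi,\chi_{\mu}})=0$), which is again Kostant's theory and which you rightly flag as the one substantive external input; your flag-on-$\bigwedge\g_{-1}$ variant, which needs Engel's theorem to get trivial subquotients and then only a single Ext-vanishing, is a clean way to minimize that dependence. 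The trade-off is transparency versus economy: your route makes visible exactly which piece of Kostant's machinery (acyclicity of Whittaker modules for non-degenerate $\psi$) carries the weight, at the cost of length, while the paper's citation of Theorem 4.6 is shorter but opaque. Neither version has a gap.
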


\begin{proof} Since $\psi$ is non-singular, it follows that  $\g_{\psi}=\g_{0}$ and $M_{\psi,\lambda}=W_{\psi,\chi_{\lambda}}$. Therefore  $\M_{\psi,\lambda}\cong \bigwedge\fg_{-1}\otimes_{\cc}W_{\psi,\chi_{\lambda}}$ by (\ref{eqn:isomorphism as g_0 modules between the two constructions}) as $\g_{0}$-modules. 
Since $\bigwedge \g_{-1}$ is finite-dimensional, the result follows from \cite[Thm 4.6]{Ko1978}.
\end{proof}

Since $\Wh_{\psi}(\M_{\psi,\lambda})\subseteq\Wh_{\psi}^{0}(\M_{\psi,\lambda})$, we obtain the following:

\begin{cor}Assume that $\psi\in\lm$ is non-singular. Then the space $\Wh_{\psi}(\widetilde{M}_{\psi,\lambda})$ is finite-dimensional.
\end{cor}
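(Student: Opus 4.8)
The plan is to deduce the corollary directly from Proposition~\ref{prop:dimension of space of psi_0 vectors in induced} together with the elementary set-theoretic inclusion $\Wh_{\psi}(\widetilde{M}_{\psi,\lambda})\subseteq\Wh_{\psi}^{0}(\widetilde{M}_{\psi,\lambda})$ noted just before the statement. Since $\psi$ is non-singular by hypothesis, Proposition~\ref{prop:dimension of space of psi_0 vectors in induced} applies and gives
$$
\dim \Wh_{\psi}^{0}(\widetilde{M}_{\psi,\lambda})=\dim\textstyle\bigwedge\fg_{-1}<\infty,
$$
because $\bigwedge\fg_{-1}$ is a finite-dimensional exterior algebra (here $\fg_{-1}$ is finite-dimensional). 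A subspace of a finite-dimensional vector space is finite-dimensional, so $\Wh_{\psi}(\widetilde{M}_{\psi,\lambda})$ is finite-dimensional as well, which is exactly the assertion.

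There is essentially no obstacle here: the corollary is an immediate consequence of the proposition and a trivial dimension bound, and the only thing to check is that the hypotheses of Proposition~\ref{prop:dimension of space of psi_0 vectors in induced} are met, namely that $\psi\in\lm$ is non-singular — which is the standing assumption of this section. One could optionally remark that in fact $\dim\Wh_{\psi}(\widetilde{M}_{\psi,\lambda})\leq\dim\bigwedge\fg_{-1}$, but this finer bound is not needed for the stated corollary.
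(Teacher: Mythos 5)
Your proof is correct and is exactly the paper's argument: the corollary is deduced from Proposition \ref{prop:dimension of space of psi_0 vectors in induced} via the inclusion $\Wh_{\psi}(\widetilde{M}_{\psi,\lambda})\subseteq\Wh_{\psi}^{0}(\widetilde{M}_{\psi,\lambda})$ and the finite-dimensionality of $\bigwedge\fg_{-1}$. No issues.
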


\subsection{Classification of simple Whittaker modules for $\fsl(1,2)$}
In this section, we consider more carefully the modules $\M_{\psi,\lambda}$ for $\g=\fsl(1,2)$.  In Section \ref{section:construction of the induced modules}, we showed that these modules always have finite composition length.  Here we use Corollary \ref{cor:Whittaker module with 1-dim'l space of Whittaker vectors is irreducible} show that for this superalgebra they are in fact always simple. 
The Lie superalgebra $\fsl(1,2)$ has the following basis:
$$
h=\left(\begin{array}{ccc}0 & 0 & 0 \\0 & 1 & 0 \\0 & 0 & -1\end{array}\right), \quad z=\left(\begin{array}{ccc}1 & 0 & 0 \\0 & 1 & 0 \\0 & 0 & 0\end{array}\right)
$$
$$
x_1=\left(\begin{array}{ccc}0 & 0 & 0 \\0 & 0 & 1 \\0 & 0 & 0\end{array}\right), \quad x_2=\left(\begin{array}{ccc}0 & 1 & 0 \\0 & 0 & 0 \\0 & 0 & 0\end{array}\right), \quad x_3=\left(\begin{array}{ccc}0 & 0 & 1 \\0 & 0 & 0 \\0 & 0 & 0\end{array}\right)
$$

$$
y_1=\left(\begin{array}{ccc}0 & 0 & 0 \\0 & 0 & 0 \\0 & 1 & 0\end{array}\right), \quad y_2=\left(\begin{array}{ccc}0 & 0 & 0 \\1 & 0 & 0 \\0 & 0 & 0\end{array}\right), \quad y_3=\left(\begin{array}{ccc}0 & 0 & 0 \\0 & 0 & 0 \\1 & 0 & 0\end{array}\right)
$$

Then, $\g_0 = \mbox{span} \{h,z,x_1, y_1 \}$, $\g_1= \mbox{span} \{ x_2, x_3\}$, and $\g_{-1}= \mbox{span} \{y_2, y_3\}$.  Note that $\g_0 \cong \fsl_2\oplus\cc z$, where $\{x_1,y_1,h\}$ is an $\fsl_2$-triple. Let $C=4y_1x_1+h^{2}+2h$ be the Casimir element of $U(\fsl_2)$. One can easily check that $Z(\g_0)=\cc[C,z]$. 

This gives a triangular decomposition for $\g$, where $\h=\mbox{span} \{h, z\}$ is a Cartan subalgebra and $\n^+=\mbox{span} \{x_1, x_2, x_3 \}$. (Note that $\n^{+}_{0}=\mbox{span}\{x_1\}$ and $\n^+_{1}=\mbox{span}\{x_2, x_3\}$.) Let $\psi \in \lm$ be non-singular. Then $\psi|_{\n_1^+}=0$ and suppose that $\psi(x_1)=a\in\cc$, $a\neq 0$.   Let $\lambda \in \h^*$, and assume that  $\chi_{\lambda}(C)=b$ and $\chi_{\lambda}(z)=c$, $b,c \in \cc$. Let $W_{\psi,\chi_{\lambda}}$ be defined as before.

Clearly, $\g_{\psi}=\g_{0}$, $\fr_{\psi}^{-}=\g_{-1}$. By (\ref{eqn:basic property of induced for g})
\begin{equation}\label{eqn: isom of induced for g in example}
\widetilde{M}_{\psi,\lambda}\cong \bigwedge\g_{-1}\otimes_{\cc}W_{\psi,\chi_{\lambda}}
\end{equation} as $\g_0$-modules. In what follows, we will identify $x\otimes v$ with $xv$ for any $x\in \bigwedge\g_{-1}, v\in W_{\psi,\chi_{\lambda}}$. Then $\M_{\psi,\lambda}=U(\g)w$ is a Whittaker $\g$-module of type $\psi$ with cyclic Whittaker vector $w$.

\begin{lem}
\begin{itemize}
\item [\rm(a)]The set $\{h^{m} w\mid m\geq0\}$ is a $\cc$-basis of $W_{\psi,\chi_{\lambda}}$.
\item [\rm(b)] The set $\{y_2^k y_3^{\ell} h^{m}w\mid k,\ell\in\{0,1\}, m\geq0\}$ is a $\cc$-basis of $\widetilde{M}_{\psi,\lambda}.$
\end{itemize}
\end{lem}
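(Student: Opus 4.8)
The plan is to build the basis in two stages, first describing $W_{\psi,\chi_\lambda}$ as a cyclic $\g_0$-module and then tensoring with $\bigwedge\g_{-1}$. For part (a), recall that $W_{\psi,\chi_\lambda}$ is the simple Whittaker $\g_0$-module of type $\psi$ and central character $\chi_\lambda$, and that $\g_0\cong\fsl_2\oplus\cc z$ with $z$ acting by the scalar $c=\chi_\lambda(z)$. So it suffices to analyze $W_{\psi,\chi_\lambda}$ as an $\fsl_2$-module. By \cite[Thm 3.6.1]{Ko1978} (invoked in the proof of Proposition~\ref{prop:unique simple Whittaker for g_psi}), $W_{\psi,\chi_\lambda}=U(\fsl_2)w=U(\n_0^-)w$, where $\n_0^-=\cc y_1$; thus $\{y_1^m w\mid m\ge 0\}$ spans. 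I would then convert this spanning set to $\{h^m w\mid m\ge 0\}$: using $[h,y_1]=-2y_1$ one shows by an upper-triangular induction that $\mathrm{span}_\cc\{y_1^j w\mid j\le m\}=\mathrm{span}_\cc\{h^j w\mid j\le m\}$ for each $m$ — indeed $h\cdot(y_1^j w) = y_1^j h w - 2j y_1^j w$ and $hw=\la(h)w$ (since $x_1 w=\psi(x_1)w=aw$ with $a\ne0$ forces... ) — wait, more carefully: apply $x_1$ and the Casimir relation. The cleanest route: $C=4y_1x_1+h^2+2h$ acts by $b$, and $x_1 w=aw$, so $4a\,y_1 w=(b-h^2-2h)w$, giving $y_1w$ as a (quadratic) polynomial in $h$ applied to $w$. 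Iterating, every $y_1^m w$ lies in $\mathrm{span}\{h^j w\mid j\le 2m\}$, and conversely every $h^j w$ lies in $\mathrm{span}\{y_1^i w\mid i\le j\}$; combined with linear independence of $\{y_1^m w\}$ (which follows from $W_{\psi,\chi_\lambda}\cong U(\n_0^-)$ as $\n_0^-$-module, i.e.\ it is free of rank one over $\cc[y_1]$), a dimension count in each finite filtration step yields that $\{h^m w\}$ is a basis.

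For part (b), combine part (a) with the $\g_0$-module isomorphism $\widetilde M_{\psi,\lambda}\cong\bigwedge\g_{-1}\otimes_\cc W_{\psi,\chi_\lambda}$ from~(\ref{eqn: isom of induced for g in example}), under the identification $x\otimes v\mapsto xv$. Since $\g_{-1}=\mathrm{span}\{y_2,y_3\}$ is two-dimensional and supercommutative (both $y_2,y_3$ odd), $\bigwedge\g_{-1}$ has $\cc$-basis $\{1,\,y_2,\,y_3,\,y_2y_3\}$, i.e.\ $\{y_2^k y_3^\ell\mid k,\ell\in\{0,1\}\}$. Tensoring with the basis $\{h^m w\}$ of $W_{\psi,\chi_\lambda}$ from part (a) gives that $\{y_2^k y_3^\ell h^m w\mid k,\ell\in\{0,1\},\,m\ge0\}$ is a $\cc$-basis of $\widetilde M_{\psi,\lambda}$. (One should note that under the identification, $h^m w$ means the image of $1\otimes h^m w$, and $y_2^k y_3^\ell h^m w$ is the image of $y_2^k y_3^\ell\otimes h^m w$; the map $f$ of~(\ref{eqn:basic property of induced for g}), specialized via the PBW-type decomposition $U(\fr_\psi^-)=\bigwedge\g_{-1}$ here since $\fm_\psi^-=0$, is precisely what makes these agree.)

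I expect the main obstacle to be part (a): establishing that $\{h^m w\}$ — rather than the ``obvious'' basis $\{y_1^m w\}$ — is a basis, which requires carefully tracking the change of basis through the Casimir relation $4a\,y_1 w=(b-h^2-2h)w$ and showing the resulting triangular transition (degree $m$ in $y_1$ maps to degrees $\le 2m$ in $h$, and vice versa) is invertible at each filtration level. This is a routine but slightly fiddly induction; everything in part (b) is then formal, using only the structure of $\bigwedge\g_{-1}$ and the already-established $\g_0$-module isomorphism.
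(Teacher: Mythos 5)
Part (b) of your argument is correct and coincides with the paper's: once (a) is known, the claim follows from the $\g_0$-module isomorphism (\ref{eqn: isom of induced for g in example}) and the PBW theorem, since $\bigwedge\g_{-1}$ has basis $\{1,y_2,y_3,y_2y_3\}$. Note, though, that the paper does not actually prove (a); it cites \cite[Lemma 2]{mcdowell1}, so the burden of your write-up falls entirely on your replacement argument for (a), and that argument has a genuine gap.

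The gap is at the very first step: the claim that $W_{\psi,\chi_\lambda}=U(\n_0^-)w$, i.e.\ that $\{y_1^m w\mid m\ge 0\}$ spans, is false. By PBW and $x_1w=aw$ one only gets $W_{\psi,\chi_\lambda}=\cc[y_1,h]\,w$, and the Casimir relation $4a\,y_1w=(b-h^2-2h)w$ shows (by induction, using $[h,y_1]=-2y_1$) that $y_1^mw=\bigl(\tfrac{-1}{4a}\bigr)^m h^{2m}w+(\text{terms of lower $h$-degree})$. So if $\{h^mw\}$ is indeed a basis, then $\cc[y_1]w$ meets only even leading $h$-degrees and in particular does not contain $hw$: the module is free of rank \emph{two} over $\cc[y_1]$, with basis $\{w,hw\}$ (the rank equals $|W|=2$, as in Kostant's separation theorem; it is not free of rank one as you assert). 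Your claimed reverse inclusion $h^jw\in\operatorname{span}\{y_1^iw\mid i\le j\}$ is false for $j=1$ for the same reason. Moreover, even granting both inclusions, the final dimension count does not close: linear independence of $\{y_1^iw\mid i\le m\}$ supplies only $m+1$ independent vectors inside $\operatorname{span}\{h^jw\mid j\le 2m\}$, whereas independence of the $h^jw$ would require $2m+1$. To repair (a) you must prove linear independence of $\{h^mw\}$ by some non-circular route, e.g.\ McDowell's explicit realization of the $\fsl_2$ Whittaker module on $\cc[h]$, or by showing that $U(\fsl_2)$ is free with basis $\{1,h\}$ over the subalgebra generated by $C$, $x_1$, $y_1$ (an associated-graded computation in $\cc[y,h,x]$ with $\operatorname{gr}C=4yx+h^2$).
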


\begin{proof}
Part (a) follows  from  \cite[Lemma 2]{mcdowell1}. The claim in (b) follows from part (a), (\ref{eqn: isom of induced for g in example}), and the PBW Theorem. 
\end{proof}

\begin{lem}\label{exam:lem:basis of Wh_psi_0}
The set $\{w, y_2w, y_2y_3 w, 2ay_3w+y_2hw\}$ is a basis  of $\Wh_{\psi}^{0}(\widetilde{M}_{\psi,\lambda})$.
\end{lem}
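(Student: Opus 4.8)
The plan is to compute directly, in the explicit PBW basis $\{y_2^k y_3^\ell h^m w\mid k,\ell\in\{0,1\},\ m\ge 0\}$ of $\widetilde M_{\psi,\lambda}$, the condition that defines $\Wh_\psi^0(\widetilde M_{\psi,\lambda})$, namely annihilation by $x_1-\psi(x_1)=x_1-a$. Since $\n_0^+=\mathrm{span}\{x_1\}$, a vector lies in $\Wh_\psi^0$ exactly when $(x_1-a)$ kills it, so the whole problem reduces to understanding the single operator $x_1-a$ on $\widetilde M_{\psi,\lambda}$. First I would record the needed brackets in $\fsl(1,2)$: $[x_1,y_1]=h$, $[h,x_1]=2x_1$, $[x_1,h]=-2x_1$, together with $[x_1,y_2]$, $[x_1,y_3]$, $[x_1,h]$ acting on the $\g_{-1}$-part (one finds $[x_1,y_3]=y_2$ or a scalar multiple, $[x_1,y_2]=0$, up to normalization of the basis — I would fix the exact constants from the matrix realization), and the fact that $x_1 w=aw$ on the cyclic Whittaker vector.

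The key computation is the action of $x_1$ on a general basis element $y_2^k y_3^\ell h^m w$. Because $\bigwedge\g_{-1}\otimes W_{\psi,\chi_\lambda}$ is the $\g_0$-module decomposition from (\ref{eqn: isom of induced for g in example}), $x_1$ acts as a derivation-type operator: $x_1\cdot(y_2^k y_3^\ell h^m w)=[x_1,y_2^k y_3^\ell]h^m w + y_2^k y_3^\ell (x_1 h^m w)$. On $W_{\psi,\chi_\lambda}$ the operator $x_1$ acts on $h^m w$ by a formula already implicit in McDowell's Lemma (\cite[Lemma 2]{mcdowell1}): $x_1 h^m w = a h^m w + (\text{lower order in }h)$, coming from $[x_1,h^m]=\sum -2 h^{j} x_1 h^{m-1-j}$ plus the Casimir relation $4y_1 x_1+h^2+2h=C$ acting as $b$. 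I would then split $\widetilde M_{\psi,\lambda}$ into its four "$\bigwedge\g_{-1}$-layers" indexed by $(k,\ell)\in\{(0,0),(1,0),(0,1),(1,1)\}$ and observe that $x_1-a$ is block-upper-triangular with respect to a suitable ordering of these layers (the off-diagonal pieces come from $[x_1,y_3]=y_2$, which lowers $\ell$ and raises $k$). On the "top" layers — $(1,1)$ and $(1,0)$ — the $\g_{-1}$-bracket terms vanish or simplify, so $x_1-a$ restricted there behaves exactly like $x_1-a$ on $W_{\psi,\chi_\lambda}$ twisted by a weight shift, whose kernel is one-dimensional by Kostant's theorem (\cite[Thm 4.6]{Ko1978}) — already invoked in Proposition \ref{prop:dimension of space of psi_0 vectors in induced}, which tells us $\dim\Wh_\psi^0(\widetilde M_{\psi,\lambda})=\dim\bigwedge\g_{-1}=4$.

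So the real content is not the dimension count — that is handed to us by Proposition \ref{prop:dimension of space of psi_0 vectors in induced} — but exhibiting four explicit linearly independent Whittaker vectors, and the proposed list $\{w,\ y_2 w,\ y_2 y_3 w,\ 2a y_3 w + y_2 h w\}$ is clearly linearly independent in the PBW basis, so it suffices to check each is annihilated by $x_1-a$. The first, $w$, is immediate. For $y_2 w$: $x_1(y_2 w)=[x_1,y_2]w+y_2 x_1 w = 0 + a y_2 w$, using $[x_1,y_2]=0$. For $y_2 y_3 w$: $x_1(y_2 y_3 w)=[x_1,y_2 y_3]w + y_2 y_3 x_1 w$; here $[x_1,y_2 y_3]=[x_1,y_2]y_3 - y_2[x_1,y_3]= -y_2\cdot y_2 = 0$ since $y_2^2=0$ in $\bigwedge\g_{-1}$ — so again we get $a y_2 y_3 w$. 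The only genuinely interesting check is the fourth vector: $x_1(2a y_3 w + y_2 h w) = 2a([x_1,y_3]w + y_3 x_1 w) + [x_1,y_2 h]w + y_2 x_1 h w = 2a(y_2 w + a y_3 w) + ([x_1,y_2]h + y_2[x_1,h])w + y_2(a h w + 2a\cdot(\text{const})\, w)$, and the terms $2a y_2 w$ and $-2a y_2 w$ (from $y_2[x_1,h]=-2 a y_2$ acting... I would pin down the precise scalar) must cancel, leaving exactly $a(2a y_3 w + y_2 h w)$; verifying this cancellation with the correct structure constants is the one place where I expect to spend care. The main obstacle, then, is purely bookkeeping: getting the normalization of $[x_1,y_3]$, $[x_1,h]$ on the $\g_{-1}$ factor, and the constant in $x_1 h w = a h w + (\ast) w$ all mutually consistent, so that the fourth vector's annihilation comes out clean. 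Once the four vectors are confirmed to lie in $\Wh_\psi^0$ and are visibly independent, the equality $\dim\Wh_\psi^0(\widetilde M_{\psi,\lambda})=4$ from Proposition \ref{prop:dimension of space of psi_0 vectors in induced} forces them to be a basis, completing the proof.
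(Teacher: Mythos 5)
Your proposal is correct and follows essentially the same route as the paper: the dimension count $\dim\Wh_{\psi}^{0}(\widetilde{M}_{\psi,\lambda})=\dim\bigwedge\g_{-1}=4$ is taken from Proposition \ref{prop:dimension of space of psi_0 vectors in induced}, and one then verifies directly that the four listed vectors are annihilated by $x_1-a$ and are linearly independent (the paper leaves this as ``easy to check''; your bracket computations $[x_1,y_2]=0$, $[x_1,y_3]=y_2$, $[x_1,h]=-2x_1$ and the resulting cancellation for $2ay_3w+y_2hw$ are the correct details, modulo the irrelevant sign slip in $[x_1,y_2y_3]$, which vanishes anyway since $y_2^2=0$).
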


\begin{proof}
It follows by Proposition \ref{prop:dimension of space of psi_0 vectors in induced} that $\mbox{dim}\Wh_{\psi}^{0}(\M_{\psi,\lambda})=\mbox{dim}\bigwedge\g_{-1}=4$. It easy to check that  $w, y_2w, y_2y_3 w, 2ay_{3}w+y_{2}h_{1} w$ are in  $\Wh_{\psi}^{0}(\M_{\psi,\lambda})$ and they are obviously linearly independent.
\end{proof}

\begin{lem} \label{lem: The space Wh is one-dimensional}
The space $\Wh_\psi(\M_{\psi,\lambda})$ is one-dimensional.
\end{lem}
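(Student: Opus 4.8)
The plan is to start from the explicit basis of $\Wh_{\psi}^{0}(\widetilde{M}_{\psi,\lambda})$ produced in Lemma \ref{exam:lem:basis of Wh_psi_0}, namely $\{w,\ y_2w,\ y_2y_3w,\ 2ay_3w+y_2hw\}$, and determine which $\cc$-linear combinations $v = \alpha w + \beta y_2 w + \gamma y_2 y_3 w + \delta(2ay_3 w + y_2 h w)$ are annihilated (up to $\psi$) by the odd raising operators $x_2$ and $x_3$ as well. Since $\n^+ = \mathrm{span}\{x_1,x_2,x_3\}$ and $\Wh_{\psi}^{0}$ already encodes the condition $x_1 v = \psi(x_1)v = av$, a vector $v$ lies in $\Wh_{\psi}(\widetilde{M}_{\psi,\lambda})$ if and only if it lies in $\Wh_{\psi}^{0}(\widetilde{M}_{\psi,\lambda})$ and additionally $x_2 v = 0$ and $x_3 v = 0$ (recall $\psi$ vanishes on $\n^+_1$). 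So the task reduces to computing the two linear maps $x_2, x_3 \colon \Wh_{\psi}^{0}(\widetilde{M}_{\psi,\lambda}) \to \widetilde{M}_{\psi,\lambda}$ on the four basis vectors and finding the common kernel.

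Concretely, I would first record the bracket relations in $\fsl(1,2)$ that are needed: the actions of $x_2, x_3$ on $y_2, y_3$ and on $h$, together with $[x_2,y_2] = $ (an element of $\h$), $[x_3,y_3] = $ (an element of $\h$), $[x_2,y_3]=0$ or $\pm x_1$-type terms, etc., and the facts $x_2 w = x_3 w = 0$ (since $w$ is a Whittaker vector of type $\psi$ with $\psi|_{\n^+_1}=0$) and $x_1 w = aw$, $hw$ expressible via the basis $\{h^m w\}$. Using these, I would push each of $x_2$ and $x_3$ through the monomials $y_2 w$, $y_2 y_3 w$, and $2ay_3 w + y_2 hw$, rewriting the results in the PBW basis $\{y_2^k y_3^\ell h^m w\}$ of Lemma (b). The vector $w$ itself is clearly in $\Wh_\psi$, giving one dimension; the content of the lemma is that no further combination survives, i.e. the restriction of $(x_2,x_3)$ to the span of $\{y_2 w, y_2 y_3 w, 2ay_3 w + y_2 hw\}$ has trivial kernel. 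This should come down to observing that, modulo the $w$-direction, the images $x_2(y_2 w)$, $x_3(y_2 y_3 w)$, etc., involve the basis vectors $h^m w$, $y_3 h^m w$, $y_2 h^m w$ in a way that forces $\beta=\gamma=\delta=0$; the nonvanishing of $a$ is what makes the relevant coefficients invertible.

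The main obstacle I anticipate is purely bookkeeping: correctly computing $x_2$ and $x_3$ applied to $y_2 h w$ and $y_3 w$, since these produce terms with $h$ to the left that must be normal-ordered (using $[x_i, h] = \pm x_i$ and then $x_i w = 0$ resp. $x_1 w = aw$), and one must be careful that cross-terms like $x_2(y_3 w)$ and $x_3(y_2 w)$ — which may be nonzero multiples of $x_1 w = aw$, hence contribute to the $w$-component — are tracked so as not to accidentally create a spurious kernel element. Once the $3\times(\text{basis})$ coefficient matrix of $(x_2,x_3)$ on the three non-trivial generators is written down, injectivity is immediate, and then $\Wh_\psi(\widetilde{M}_{\psi,\lambda}) = \cc w$ is one-dimensional as claimed.
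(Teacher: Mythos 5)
Your proposal is correct and follows essentially the same route as the paper: both start from the four-dimensional space $\Wh_{\psi}^{0}(\widetilde{M}_{\psi,\lambda})$ with the basis $\{w,\,y_2w,\,y_2y_3w,\,2ay_3w+y_2hw\}$ of Lemma \ref{exam:lem:basis of Wh_psi_0}, compute $x_2$ and $x_3$ on each basis vector, and verify that the common kernel is exactly $\cc w$. What remains in your write-up is only the bookkeeping you yourself flag: the paper records the eight products $x_iw_j$ explicitly and reads off first $d_3=0$ and $d_2+d_4(c-2)=0$ from $x_3v=0$, then $d_2=d_4=0$ from $x_2v=0$, with the hypothesis $a\neq 0$ entering exactly where you predict.
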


\begin{proof}

Set $w_1=w, w_2=y_2w, w_3=y_2y_3 w, w_4=2ay_3+y_2h w$. Then

\begin{eqnarray}
&&x_2 w_1=0\\
\label{x_3 on w}
&&x_3 w_1=0\\
\label{x_2 on w_2}
&&x_2w_2=\frac{1}{2}cw+\frac{1}{2}h w\\
\label{x_3 on w_2}
&&x_3w_2=aw \\
\label{x_2 on w_3}
&&x_2w_3=(\frac{1}{2}c-1)y_3w+\frac{1}{2}y_3hw-\frac{1}{4a}bw_2+\frac{1}{4}y_2hw+\frac{1}{4a}y_2h^2 w\\
\label{x_3 on w_3}
&&x_3w_3=(1-\frac{1}{2}c)w_2+\frac{1}{2}w_4\\
\label{x_2 on w_4}
&&x_2w_4=\frac{1}{2}bw+\frac{1}{2}(c-1)hw\\
\label{x_3 on w_4}
&&x_3w_4=a(c-2)w
\end{eqnarray}

Now let $v\in \Wh_\psi(\M_{\psi,\lambda})$. By Lemma \ref{exam:lem:basis of Wh_psi_0}   it follows that $v$ has a unique expression $v=d_1w+d_2w_2+d_3w_3+d_4w_4$ for some $d_i\in\cc$. Since $v$ is a Whittaker vector, we know that $x_2 v=x_3v=0$.  Then (\ref{x_3 on w}), (\ref{x_3 on w_2}), (\ref{x_3 on w_3}) and (\ref{x_3 on w_4})  imply that $0=x_3 v=ad_2w+d_3(-\frac{1}{2}c)w_2+d_3 \frac{1}{2} w_4+d_4 a(c-2)w$. Therefore $d_{2}+d_4(c-2)=0$, $d_3=0$. Suppose that $c=2$. Then $d_2=0$ and $v-d_1w_1=d_4w_4\in\Wh_{\psi}(\M_{\psi,\lambda})$ which can happen only if $d_4=0$ by (\ref{x_2 on w_4}). So in the case $c=2$, the lemma is true. Now assume that $c\neq 2$. Then $v=d_1w_1+d_2w_2+\frac{d_2}{2-c} w_4$ and (\ref{x_2 on w_2}),  (\ref{x_2 on w_4}) imply that
\begin{eqnarray*}
x_2 v&=&d_2x_2w_2+\frac{d_2}{2-c}x_2w_4\\
&=&d_2(\frac{1}{2}cw+\frac{1}{2}hw)+\frac{d_2}{2-c}(\frac{1}{2}bw+\frac{1}{2}(c-1)hw).
\end{eqnarray*}Therefore  $x_2v=0$ if and only if
$\frac{1}{2}d_2+\frac{d_2}{2(2-c)}(c-1)=0$ which can happen if and only if $d_2=0$. Hence $v=d_1w$ and this completes the proof.
\end{proof}

We now have the following:

\begin{prop}
$\M_{\psi,\lambda}$ is irreducible as a $\g$-module.
\end{prop}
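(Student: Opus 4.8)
The plan is to invoke Corollary~\ref{cor:Whittaker module with 1-dim'l space of Whittaker vectors is irreducible}, which says that a Whittaker module $V \in \W(\psi)$ with $\dim \Wh_\psi(V) = 1$ is irreducible. Here $\g = \fsl(1,2)$ and $\psi \in \lm$ is nonzero, hence non-singular (since $\pi_{\0} = \{x_1\}$ and $\psi(x_1) = a \neq 0$). The module $\widetilde{M}_{\psi,\lambda}$ is by construction a Whittaker module of type $\psi$ in $\W(\g,\n)$, generated by the cyclic Whittaker vector $w = \tilde{w}_{\psi,\lambda}$; and since it lies in $\W(\g,\n,\psi)$ by Theorem~\ref{theorem: category decomposition} and the surrounding discussion, the hypotheses of the corollary are exactly met once we know $\dim \Wh_\psi(\widetilde{M}_{\psi,\lambda}) = 1$.

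First I would observe that $\psi$ being non-singular means $I(\psi)$ is one-dimensional here (indeed $\psi([\n_{\overline 1}^+, \n_{\overline 1}^+]) = 0$ automatically since $\psi(\g_1) = 0$), so the notion of Whittaker vector from \eqref{defn:space of Whittaker vectors for psi} applies and $\Wh_\psi$ is the relevant space. Then the entire content of the proposition has already been established: Lemma~\ref{lem: The space Wh is one-dimensional} shows precisely that $\Wh_\psi(\widetilde{M}_{\psi,\lambda})$ is one-dimensional, spanned by $w$. So the proof is a one-line deduction: combine Lemma~\ref{lem: The space Wh is one-dimensional} with Corollary~\ref{cor:Whittaker module with 1-dim'l space of Whittaker vectors is irreducible}.

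There is no real obstacle remaining at this stage — all the work was done in Lemmas~\ref{exam:lem:basis of Wh_psi_0} and~\ref{lem: The space Wh is one-dimensional}, where the four-dimensional space $\Wh_\psi^0(\widetilde{M}_{\psi,\lambda})$ was computed explicitly and then cut down to the one-dimensional subspace $\Wh_\psi(\widetilde{M}_{\psi,\lambda})$ by imposing the vanishing of the odd generators $x_2, x_3$. The only point to be careful about is confirming that $\widetilde{M}_{\psi,\lambda}$ genuinely satisfies the running hypotheses of the corollary (that it is a Whittaker module in $\W(\psi)$ with a Whittaker vector in the sense of one-dimensional $I(\psi)$), which is immediate from the type-I setup of Section~\ref{section:TypeI} and the non-singularity of $\psi$.

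\begin{proof}
Since $\psi \in \lm$ is non-singular for $\g = \fsl(1,2)$, we have $\psi(\g_1) = 0$, and in particular $\psi([\n^+_{\overline 1}, \n^+_{\overline 1}]) = 0$, so $I(\psi)$ is one-dimensional and $\Wh_\psi$ is defined as in \eqref{defn:space of Whittaker vectors for psi}. By construction $\widetilde{M}_{\psi,\lambda}$ is a Whittaker module of type $\psi$ in $\W(\g,\n,\psi)$ with cyclic Whittaker vector $w$. By Lemma~\ref{lem: The space Wh is one-dimensional}, $\dim \Wh_\psi(\widetilde{M}_{\psi,\lambda}) = 1$. Corollary~\ref{cor:Whittaker module with 1-dim'l space of Whittaker vectors is irreducible} then implies that $\widetilde{M}_{\psi,\lambda}$ is irreducible.
\end{proof}
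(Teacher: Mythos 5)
Your proof is correct and is essentially identical to the paper's: both deduce irreducibility by combining Lemma~\ref{lem: The space Wh is one-dimensional} with Corollary~\ref{cor:Whittaker module with 1-dim'l space of Whittaker vectors is irreducible}. The extra verification you supply (that $I(\psi)$ is one-dimensional and that $\widetilde{M}_{\psi,\lambda}$ satisfies the corollary's hypotheses) is accurate and harmless, just more explicit than the paper's one-line argument.
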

\begin{proof}
This follows from Lemma \ref{lem: The space Wh is one-dimensional} and Corollary \ref{cor:Whittaker module with 1-dim'l space of Whittaker vectors is irreducible}.
\end{proof}

Combining the above result with Theorem \ref{thm: irreducible Whittaker homomorphic image of induced} we obtain the following: 

\begin{cor}
Let $\g=\fsl(1,2)$,  $\psi\in\lm$, $\psi\neq0$. The $\{\M_{\psi,\lambda}, \lambda\in\h^{*}/W^{\cdot}\}$ represent the isomorphism classes of all  strongly typical simple Whittaker $\g$-modules of type $\psi$.
\end{cor}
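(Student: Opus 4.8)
The plan is to obtain the corollary as a formal consequence of three facts already in place: the Proposition just proved, that $\widetilde{M}_{\psi,\lambda}$ is irreducible as a $\g$-module; Theorem \ref{thm: irreducible Whittaker homomorphic image of induced}, that every strongly typical simple Whittaker $\g$-module of type $\psi$ is a homomorphic image of some $\widetilde{M}_{\psi,\lambda}$; and Proposition \ref{prop:basic properties of the induced modules}(c), the isomorphism criterion for the family $\widetilde{M}_{\psi,\lambda}$. The first step is to record that, for $\g=\fsl(1,2)$, the hypothesis $\psi\neq0$ already forces $\psi$ to be non-singular: since $\n^{+}_{0}=\cc x_{1}$ and $\psi(\g_{1})=0$, a nonzero $\psi\in\lm$ must satisfy $\psi(x_{1})\neq0$. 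Hence all the cited results apply, and moreover $\g_{\psi}=\g_{0}$, so $Z=\cc z\neq0$ and $W_{\psi}=W$.

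Next I would argue the two inclusions. If $V$ is a strongly typical simple Whittaker $\g$-module of type $\psi$, Theorem \ref{thm: irreducible Whittaker homomorphic image of induced} supplies $\lambda\in\h^{*}$ and a surjective $\g$-homomorphism $\widetilde{M}_{\psi,\lambda}\to V$; since $\widetilde{M}_{\psi,\lambda}$ is irreducible by the preceding Proposition, the kernel of this map is zero, so $V\cong\widetilde{M}_{\psi,\lambda}$. Conversely, each $\widetilde{M}_{\psi,\lambda}$ is, by its construction in Section \ref{section:construction of the induced modules}, a Whittaker $\g$-module of type $\psi$ generated by $\tilde{w}_{\psi,\lambda}$, and it is simple; by Lemma \ref{lem:Schur's Lemma} it carries a central character $\widetilde{\chi}_{\lambda}$, and precisely those $\lambda$ for which $\widetilde{\chi}_{\lambda}$ is strongly typical yield the strongly typical members of the list. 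Finally, Proposition \ref{prop:basic properties of the induced modules}(c) together with $W_{\psi}=W$ gives $\widetilde{M}_{\psi,\lambda}\cong\widetilde{M}_{\psi,\mu}$ if and only if $W\cdot\lambda=W\cdot\mu$, so the list is indexed without repetition by $\h^{*}/W^{\cdot}$.

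The genuinely substantive work — proving that $\widetilde{M}_{\psi,\lambda}$ is simple — has already been carried out in the lemmas preceding the corollary: the explicit basis of $\Wh_{\psi}^{0}(\widetilde{M}_{\psi,\lambda})$ in Lemma \ref{exam:lem:basis of Wh_psi_0}, the computation that $\dim\Wh_{\psi}(\widetilde{M}_{\psi,\lambda})=1$ in Lemma \ref{lem: The space Wh is one-dimensional}, and the passage to irreducibility via Corollary \ref{cor:Whittaker module with 1-dim'l space of Whittaker vectors is irreducible}. So within the corollary itself the only point requiring care is to state the conclusion accurately — namely that the list $\{\widetilde{M}_{\psi,\lambda}\mid\lambda\in\h^{*}/W^{\cdot}\}$ realizes every strongly typical simple Whittaker $\g$-module of type $\psi$ exactly once — rather than asserting that every member of the list is itself strongly typical, which fails for atypical $\lambda$. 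As an alternative route bypassing Theorem \ref{thm: irreducible Whittaker homomorphic image of induced}, one observes that the simplicity of $\widetilde{M}_{\psi,\lambda}$ identifies it with its unique simple quotient $\widetilde{L}_{\psi,\lambda}$ of Proposition \ref{prop:proper submodules of the induced modules}, so the statement is exactly the specialization to $\g=\fsl(1,2)$ of the corollary at the end of Section \ref{subsection:Strongly typical simple Whittaker modules}.
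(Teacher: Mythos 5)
Your proposal is correct and follows exactly the paper's route: the corollary is obtained by combining the immediately preceding Proposition (simplicity of $\M_{\psi,\lambda}$) with Theorem \ref{thm: irreducible Whittaker homomorphic image of induced} to identify every strongly typical simple Whittaker module of type $\psi$ with some $\M_{\psi,\lambda}$, and with Proposition \ref{prop:basic properties of the induced modules}(c) (noting $W_{\psi}=W$ since a nonzero $\psi$ is automatically non-singular for $\fsl(1,2)$) to see the list has no repetitions. Your added remarks — that $\psi\neq0$ forces non-singularity here, and that the list should be read as containing the strongly typical simples rather than consisting only of them — are accurate clarifications of points the paper leaves implicit.
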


   \bibliographystyle{amsplain}

\bigskip

\end{document}